\newcounter{tmp}
\pgfplotsset{compat = newest}
\numberwithin{equation}{section}
\theoremstyle{plain}
\newtheorem{theorem}{Theorem}[section]
\newtheorem{lemma}[theorem]{Lemma}
\newtheorem{corollary}[theorem]{Corollary}
\newtheorem{proposition}[theorem]{Proposition}
\theoremstyle{definition}
\newtheorem{definition}[theorem]{Definition}
\newtheorem{example}[theorem]{Example}
\theoremstyle{remark}
\newtheorem{remark}[theorem]{Remark}
\newcommand{\lt}{\left}
\newcommand{\rt}{\right}
\newcommand{\e}{\textnormal{e}}
\newcommand{\R}{\mathbb{R}}
\newcommand{\bT}{\mathbb{T}}
\newcommand{\N}{\mathbb{N}}
\newcommand{\cB}{\mathcal{B}}
\newcommand{\cC}{\mathcal{C}}
\newcommand{\cF}{\mathcal{F}}
\newcommand{\cI}{\mathcal{I}}
\newcommand{\cM}{\mathcal{M}}
\newcommand{\cN}{\mathcal{N}}
\newcommand{\cP}{\mathcal{P}}
\newcommand{\cQ}{\mathcal{Q}}
\newcommand{\cS}{\mathcal{S}}
\newcommand{\T}{\mathbb{T}}
\newcommand{\Diam}{\textrm{Diam\,}}
\newcommand{\sT}{\mathbb{T}}
\newcommand{\Lip}{{\sf Lip}}
\newcommand{\BL}{{\sf BL}}
\newcommand{\TV}{{\sf TV}}
\newcommand{\rd}{\mathrm{d}}
\newcommand\sbullet[1][.5]{\mathbin{\vcenter{\hbox{\scalebox{#1}{$\bullet$}}}}}
\begin{document}

\title[MFL of co-evolutionary {signed} heterogeneous networks]
  {Mean field limits of co-evolutionary\\ {signed}  heterogeneous networks}
\author{Marios Antonios Gkogkas $^1$}
\author{Christian Kuehn $^{1,2}$}
\author{Chuang Xu $^{1,3}$}

\email{marios.gkogkas@yahoo.de}
\email{ckuehn@ma.tum.de}
\email{chuangxu@hawaii.edu (Corresponding author)}

\address{$^1$
Department of Mathematics\\
Technical University of Munich, Munich\\
Garching bei M\"{u}nchen\\
85748, Germany.}
\address{$^2$
Munich Data Science Institute (MDSI)\\
Technical University of Munich, Munich\\
Garching bei M\"{u}nchen\\
85748, Germany.}
\address{$3$
Department of Mathematics\\
University of Hawai'i at M\={a}noa, Honolulu, Hawai'i\\
96822, USA
}

\subjclass[2020]{Primary: 35R02. Secondary: 92C42; 60B10}

\noindent

\begin{abstract}
 Many science phenomena are modelled as interacting particle systems (IPS) coupled on static networks. In reality, network connections are far more dynamic. Connections among individuals receive feedback from nearby individuals and make changes to better adapt to the world. Hence, it is reasonable to model myriad real-world phenomena as \emph{co-evolutionary (or adaptive) networks}. These networks are used in different areas including telecommunication, neuroscience, computer science, biochemistry, social science, as well as physics, where \emph{Kuramoto-type networks} have been widely used to model interaction among a set of oscillators. In this paper, we propose a rigorous formulation for \emph{limits} of a sequence of co-evolutionary Kuramoto oscillators coupled on heterogeneous co-evolutionary networks, which receive {both positive and negative}  feedback from the dynamics of the oscillators on the networks. We show under mild conditions, the mean field limit (MFL) of the co-evolutionary network exists and the sequence of co-evolutionary Kuramoto networks converges to this MFL. Such MFL is described by solutions of a \emph{generalized} Vlasov equation. We treat the graph limits as {signed} graph measures, motivated by the recent work in [Kuehn, Xu. Vlasov equations on digraph measures, JDE, 339 (2022), 261--349]. 
In comparison to the recently emerging works on MFLs of IPS coupled on \emph{non-co-evolutionary} networks (i.e., static networks or time-dependent networks independent of the dynamics of the IPS), our work seems the first to rigorously address the MFL of a \emph{co-evolutionary} network model. The approach is based on our formulation of a generalization of the co-evolutionary network as a hybrid system of ODEs and \emph{measure differential equations} parametrized by a vertex variable, together with an analogue of the \emph{variation of parameters formula}, as well as the generalized Neunzert's in-cell-particle method developed in [Kuehn, Xu. Vlasov equations on digraph measures, JDE, 339 (2022), 261--349].
\end{abstract}

\keywords{Adaptive networks, sparse networks, evolution equations, {signed} graph limits, {generalized Vlasov equation}, Kuramoto networks}

\maketitle
\tableofcontents
\section{Introduction}

This paper studies the mean field limit (MFL) of the following general \emph{co-evolutionary} Kuramoto network:
\begin{alignat}{2}\label{coevolution-a}
\dot{\phi}_i =&\ \omega_i(t)+\frac{1}{N}\sum_{j=1}^NW_{ij}(t)g(\phi_j-\phi_i),\\\label{coevolution-b}
\dot{W}_{ij} =& -\varepsilon(W_{ij}+h(\phi_j-\phi_i)),
\end{alignat}
where $\phi_i$ is the phase of the $i$-th oscillator, $\omega_i(t)$ is the time-dependent natural frequency of the $i$-th oscillator, $W_{ij}$ is the {signed} coupling weight of the edge between node $i$ and node $j$, $g$ is the coupling function, and $h$ the adaptation rule. We also include a parameter $\varepsilon>0$, which is often assumed to be small in applications so that the particle dynamics is much faster than the time-scale of the network/graph adaptation. In addition, the feedback of the phase on the underlying graph is assumed to be local: The weight function of a given edge depends only on the edge itself  as well as the phases of the two nodes associated with the edge. In particular, when $g$ is a trigonometric function (see Section~\ref{sect-example}), the above model was proposed to describe the dynamics of a co-evolutionary network of FitzHugh-Nagumo neurons coupled through chemical excitatory synapses equipped with plasticity \cite{S01,N03,HNP16,BVSY21} (see also the references therein).

\subsection{Macroscopic limit of interacting particle systems}

Before presenting the main result of this paper, let us first review briefly the literature on macroscopic limits of Kuramoto type networks. One type of macroscopic limit is the so-called \emph{mean field limit} (MFL), i.e., a (weak) limit of the empirical distributions composed of Dirac measures of equal probability mass at the solutions of each node of the network, as the number of nodes of the network tends to infinity.
Heuristically, the MFL captures the statistical dynamics of large networks. The pioneering works date back to as early as the 1970s, by Braun and Hepp \cite{BH77}, by Dobrushin \cite{D79}, and by Neunzert \cite{N84}, where the underlying coupling graph is complete, the interaction kernel is Lipschitz, and the underlying metric induces the weak topology. The techniques have led to a quite complete derivation of the MFL for the Kuramoto model for all-to-all coupling by Lancellotti \cite{L05}. Later, MFL of Kuramoto oscillators on a sequence of dense \emph{heterogeneous} (deterministic or random) graphs with and without Lipschitz continuity were studied, e.g., in \cite{KM18,CM19a,CM19b,K20,GK20,KX21}. Recently, results were extended to \emph{sparse graphs} using different approaches \cite{ORS20,LRW23,K20,GK20,KX21,JPS24}. So far, all the above network models are given on a \emph{static} network. It is worth mentioning that the approaches in \cite{ORS20,LRW23,JPS24} dealing with sparsity are more graph-theoretic, the ones in \cite{KM18} {are}  based on analysis of $\mathcal{L}^p$-functions while restricted to \emph{graphon} type graph limits, the one in \cite{GK20} is more operator-theoretic combined with harmonic analysis techniques, and \cite{KX21} more measure-theoretic. It is noteworthy that results for graph limits of a sequence of intermediate/low density are also covered by the approaches in~\cite{GK20,KX21}. The fast development of mean field theory of IPS coupled on heterogeneous networks owes much to the development of graph limits \cite{BS01,L12,KLS19,BS20}. The MFL of large networks coupled on \emph{static} graphs, when they are viewed as a probability measure, is generally absolutely continuous with respect to {a} certain reference measure (provided the initial distribution is so) \cite{N84,KM18}. The density of the MFL is captured by the solution of a transport type PDE, the so-called \emph{Vlasov equation}, cf.~\cite{N84,KM18,K20,GK20,KX21}.

In contrast to the many aforementioned works on MFLs of IPS on statics networks, few works consider particle systems on a \emph{dynamic} network/graph. In \cite{AP21}, the MFL of a network model characterizing the collective dynamics of moving particles with time-dependent couplings among nodes is investigated. However, these time-dependent graphs satisfy that each node has the same edge weight to all the other nodes. Hence the graph is $N$-dimensional rather than $O(N^2)$-dimensional, and hence this makes it possible to treat the time-dependent weight function as a second component of an lifted particle system parametrized by the node variable. Hence, it reduces to the MFL of an IPS on a static graph, where a classical approach suffices (cf.~\cite{KM18}). However, when the weights are not ``uniform'', rigorous works are lacking. MFLs of IPS were discussed also in \cite{B21}, where kinetic equations for large finite population size were obtained, but the MFL was not rigorously characterized. Letting the number $N$ of population size tend to infinity and the small time scale $\varepsilon$ tend to zero simultaneously for sparse co-evolutionary networks using fast-slow arguments, a non-MFL result was investigated in \cite{BDOZ21}, where the underlying graph is assumed to be independent of the dynamics of the particle system and hence the dynamics of the particle system plus the weights of the underlying graph is decoupled. Moreover, evolution of graphon-valued stochastic processes motivated from genetics was lately investigated in \cite{ADR21}. To our best knowledge, there seems to have been rather rare rigorous work on the MFL of \emph{co-evolutionary} networks, where the dynamics \emph{of} the network depends on the dynamics \emph{on} the network and vice versa.
This paper is a first step to investigate rigorously the MFL of such networks by studying the co-evolutionary model \eqref{coevolution-a}-\eqref{coevolution-b}.

Another type of macroscopic limit frequently encountered in the literature is the so-called \emph{continuum limit}, {defined as a ``pointwise limit'' of the network model.  Essentially, when taking each vertex of the underlying graph of the network as a location in a metric space, as the number of vertices increases to infinity, the ODE modelling the network tends to a (possibly nonlocal due to some potentially long-range interactions) parabolic PDE, a model with a continuum spatial structure. One uses the $\ell_{\infty}$-norm to quantify the distance  between the solution of the ODE and that of the PDE so that the error is measured pointwise by taking an essential supremum over errors of trajectories at all different locations. In contrast, the MFL is a limit in a statistical sense (from the perspective of sampling), which reflects the distribution of solutions of each node (e.g., phases of the oscillators) on the graph/network of a sufficiently large size. For a more precise and clearer description of the continuum limit, we refer the reader to, e.g., a companion work \cite{GKX21}.} Continuum limits of the Kuramoto model were investigated in \cite{M19} on random sparse \emph{static} networks, and more recently investigated in \cite{GKX21} on deterministic \emph{co-evolutionary} networks. We mention that continuum limits of collective dynamics models with ``uniform'' time-varying weights (e.g., the Cucker-Smale model) were also recently studied in \cite{AP21}, which is restricted to the case where the approach for particle systems coupled on static networks remains valid. In contrast, such a restriction was removed in \cite{GKX21}. We refer the reader to \cite{AP24} for a more comprehensive review on the topic of MFL of non-exchangable systems.

\subsection{Highlights of this paper}
Our approach of analytically obtaining the MFL of the Kuramoto-type model \eqref{coevolution-a} rests on the idea that the co-evolutionary system can be decoupled (the weights of the time-dependent graph can be represented in terms of those of the initial graph as well as the added weights from the feedback of the phase state of the IPS). Therefore, we equivalently represent the Kuramoto model as an integral equation coupled on a static initial \emph{{signed}} graph that allows both positive and negative coupling manners. Then one may be able to utilize similar techniques in establishing the MFL of an IPS on a graph limit (e.g., \cite{KM18,GK20,KX21}), except that the graph limit here is allowed to be a graph with signed weights. {Some special care needs to be taken regarding the approximation of the signed graph measures. We prove the well-posedness and approximation of the MFL as solutions to a so-called generalized Vlasov equation.} 

 {Nevertheless, it is not obvious if the absolute continuity of the solution of the MFL still follows from that of the initial distribution, as in the classical case where the underlying graph is positive and static. Indeed, it seems challenging to obtain the absolute continuity of the MFL. A high-level heuristic conjecture is that the underlying mean field process becomes no longer \emph{Markovian} due to the co-evolving nature. For this reason, even if the MFL is absolutely continuous (with respect to a reference measure, e.g., the Lebesgue measure of a Euclidean space), the {corresponding} PDE accounting for the density of the MFL, may not be a transport type PDE on a finite dimensional state space, but rather on an infinite dimensional state space of functions. Technically,  the standard technique by Gronwall inequalities fails  owing to the added effects coming from the co-evolutionary terms. In this context,  as there are second order nonlocal terms in the equation of characteristics (\eqref{integralequation}), a reverse second order Gronwall inequality would be desirable to tackle this issue, which unfortunately fails in general, as shown by a simple example (see Appendix~\ref{appendix-Gronwall}). A special case where the underlying graph evolves but not coevolves with the oscillators (i.e., the adaptation function $h$ is constant) was studied in an earlier version of the manuscript \cite{GKX22}, and it is shown that the absolute continuity of the MFL is preserved provided the initial distribution was absolutely continuous. We leave this general question on absolute continuity of the MFL as well as the PDE accounting for the density for our future study.}

\subsection{Main results and sketch of the proof}

\begingroup
\setcounter{tmp}{\value{theorem}}
\setcounter{theorem}{0} 
\renewcommand\thetheorem{\Alph{theorem}}

Now we present an informal statement of the main result of this paper.

\begin{theorem}
 Under certain conditions, there exists a unique mean field limit of the Kuramoto-type model \eqref{coevolution-a}-\eqref{coevolution-b}, provided the {signed} graph sequence $\{W_{i,j}(0)\}_{N\in\N}$ as well as the sequence of initial empirical measures $\{\frac{1}{N}\sum_{i=1}^N\delta_{\phi_i(0)}\}_{N\in\N}$ converge in a suitable weak sense. More precisely, the mean field limit is a weak limit of the sequence of time-dependent empirical measures $\{\frac{1}{N}\sum_{i=1}^N\delta_{\phi_i(t)}\}_{N\in\N}$ for $t$ over a finite time interval.
\end{theorem}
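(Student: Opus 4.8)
The plan is to exploit the special structure of the adaptation law \eqref{coevolution-b}: for each fixed pair $(i,j)$, this is a scalar \emph{linear} inhomogeneous ODE in $W_{ij}$ driven by the phase difference. By the variation of parameters formula it integrates explicitly to
\begin{equation*}
  W_{ij}(t)=\e^{-\varepsilon t}W_{ij}(0)-\varepsilon\int_0^t\e^{-\varepsilon(t-s)}h\bigl(\phi_j(s)-\phi_i(s)\bigr)\,\rd s.
\end{equation*}
Substituting this back into \eqref{coevolution-a} eliminates the edge variables entirely and recasts the co-evolutionary system as a closed, but temporally non-local, system for the phases alone, coupled only through the \emph{static} initial weights $\{W_{ij}(0)\}$:
\begin{equation*}
  \dot\phi_i=\omega_i(t)+\frac1N\sum_{j=1}^N\Bigl[\e^{-\varepsilon t}W_{ij}(0)-\varepsilon\int_0^t\e^{-\varepsilon(t-s)}h\bigl(\phi_j(s)-\phi_i(s)\bigr)\,\rd s\Bigr]g\bigl(\phi_j-\phi_i\bigr).
\end{equation*}
This is the decoupling that makes the problem tractable: the dynamics \emph{of} the network has been absorbed into a memory term, leaving an interacting particle system on a fixed graph.

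First I would encode the initial graph as a digraph measure in the sense of [Kuehn, Xu, arXiv:2107.08419], i.e.\ parametrize the vertices by a point $x$ in a probability space and replace $\frac1N\sum_jW_{ij}(0)(\,\cdot\,)$ by integration against a vertex-indexed measure $\eta_x^0$. The natural mean field object is then a vertex-indexed family $t\mapsto\mu_{x,t}$ of phase measures governed by the \emph{integro-differential} equation of characteristics
\begin{equation*}
  \dot\phi(x,t)=\omega(x,t)+\int\Bigl[\e^{-\varepsilon t}-\varepsilon\int_0^t\e^{-\varepsilon(t-s)}h\bigl(\phi(y,s)-\phi(x,s)\bigr)\,\rd s\Bigr]g\bigl(\phi(y,t)-\phi(x,t)\bigr)\,\rd\eta_x^0(y),
\end{equation*}
where the memory integral forces the flow to carry entire trajectory information rather than instantaneous marginals alone. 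The claim is that the MFL is the pushforward of the initial data under the resulting flow, solving a generalized Vlasov equation in the weak (measure) sense.

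For well-posedness I would run a Banach fixed-point argument on the space of continuous curves of vertex-indexed probability measures, equipped with a bounded-Lipschitz/Wasserstein metric integrated over the vertex variable. Under the standing assumptions that $g$ and $h$ are bounded and Lipschitz and that the initial graph measure has finite total mass, the memory kernel $\e^{-\varepsilon(t-s)}$ is uniformly bounded on $[0,T]$, so the right-hand side is Lipschitz in the state and the contraction closes on a short interval, then iterates to any finite horizon $[0,T]$. Positivity of the time-dependent graph measure over $[0,T]$ follows separately from the explicit representation of $W_{ij}(t)$: the decaying factor $\e^{-\varepsilon t}$ keeps the absolutely continuous part of $\eta^0$ positive, while a lower bound on its density together with a bound on $h$ controls the (possibly negative) memory contribution, guaranteeing a positive density on a finite interval.

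The hard part will be the \emph{continuous dependence of the flow on the initial data}, which underlies both uniqueness and the approximation step carried out via the generalized Neunzert in-cell-particle method. Comparing two solutions, the difference of the right-hand sides contains the term $\varepsilon\int_0^t\e^{-\varepsilon(t-s)}[\,h(\cdots)\,]\,\rd s$, whose Gr\"onwall estimate feeds the current discrepancy back through an integral of the \emph{past} discrepancies; the resulting inequality is of second order and, as the example in Appendix~\ref{appendix-Gronwall} shows, is not amenable to a naive (reverse) Gr\"onwall closure. I would therefore track the phase discrepancy and its running time-integral as a coupled pair, bound them simultaneously, and apply a Gr\"onwall argument to the resulting vector-valued quantity; closing this estimate uniformly in $N$ is precisely what yields convergence of the empirical measures $\frac1N\sum_i\delta_{\phi_i(t)}$ to the MFL on $[0,T]$.
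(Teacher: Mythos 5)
Your proposal is correct and follows essentially the same route as the paper: variation of parameters to eliminate the edge variables, reformulation on digraph measures, a memory-type equation of mean field characteristics, a generalized Vlasov fixed-point equation solved by Banach contraction in the bounded-Lipschitz metric, positivity of the graph measure from the explicit representation, and approximation via continuous dependence in the spirit of Neunzert. The only imprecision is in locating the Gr\"onwall difficulty: the \emph{forward} continuous dependence needed for uniqueness and for the approximation of the empirical measures is closed by a standard second-order Gr\"onwall--Bellman inequality (equivalent to your coupled pair of the discrepancy and its running integral), and it is only the \emph{reverse} inequality --- needed for the backward flow and hence for absolute continuity of the MFL, not for the statement at hand --- that fails.
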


For a detailed and precise statement, see Theorem~\ref{thm-existence-VE} (well-posedness) and Theorem~\ref{th-approx} (approximation).

We are going to apply the result to investigate the MFL of {an example of a binary tree} Kuramoto-type networks, where the sequence of initial underlying graphs are sparse (see Section~\ref{sect-example}).

\endgroup

There are basically five steps to achieve the well-posedness as well as the approximation of the MFL of the co-evolutionary Kuramoto model ({see Figure~\ref{f1:outline} for a flow chart on how these steps  and the relevant results are linked}).
\smallskip

\noindent \underline{Step I.} Formulation of a generalized co-evolutionary Kuramoto network {(\eqref{characteristic-Eq-1}-\eqref{characteristic-Eq-3})}. We treat {signed} graph limits as measure-valued functions (so-called {signed} digraph measures, see Definition~\ref{definition-DGM}), and hence \eqref{coevolution-a}-\eqref{coevolution-b} can be regarded as special cases of a hybrid system  {(\eqref{characteristic-Eq-1}-\eqref{characteristic-Eq-3})} of ODEs and measure differential equations (MDEs). To do this, we introduce the derivative of a family of parameterized (by `time') measures in the Banach space of all finite signed measures equipped with the total variation norm. Well-posedness of the hybrid system is then obtained (Theorem~\ref{thm-characteristic-1}), by applying the Banach contraction principle to a suitable complete metric space, in the spirit of the standard Picard-Lindel\"{o}f iteration for ODEs.
\smallskip

\noindent \underline{Step II.} We establish an analogue of the \emph{variation of parameters formula} for the MDE,  in analogy to the Duhamel's principle for PDEs in finite dimensional state spaces \cite{J82} {(Proposition~\ref{prop-equivalent})}. Thanks to this formula, we can decouple the dynamics of the oscillators from the dynamic graph measures, and successfully reduce the hybrid system {(\eqref{characteristic-Eq-1}-\eqref{characteristic-Eq-3})}  to a one-dimensional integral equation (IE) {(\eqref{integralequation})} indexed by the vertex variable coupled on the prescribed initial graph measures as well as prescribed time-dependent measure valued functions (i.e., the MFL to be determined). Such IE can be viewed as the equation of characteristics \cite{KX21}.
\smallskip

\noindent \underline{Step III.} We establish continuous dependence of the solutions to the IE  {(Proposition~\ref{prop-semiflow})}, and show the existence and the Lipschitz regularity of the semiflow forward in time generated by the IE  {(Proposition~\ref{prop-inverse-map})}, using a Gronwall type inequality. 
\smallskip

\noindent \underline{Step IV.} We construct a \emph{generalized Vlasov equation} (VE) {(\eqref{Fixed})}\---a fixed point equation induced by the pushforward of the semiflow generated by the IE, in the sense of Neunzert \cite{N84}. Then we are able to apply the Banach contraction principle again to show the unique existence of solutions to the generalized VE {(Theorem~\ref{thm-existence-VE})}. 
\smallskip

\noindent \underline{Step V.} We establish the approximation by the MFL (solution to the generalized VE) of the empirical distributions generated by a sequence of ODEs like \eqref{coevolution-a}-\eqref{coevolution-b} (Theorem~\ref{th-approx}). To do this, we rely on the continuous dependence of the semiflow (Propositions~\ref{prop-continuousdependence} and ~\ref{prop-sol-fixedpoint}).  Such an approximation result is also based on the discretization of a given initial signed digraph measure as well as the initial distribution (i.e., the initial condition of the generalized VE), {which further is a consequence with calibration (due to that the digraph measure may not be positive)}  of the recent results of probability measures by finitely supported discrete measures with equal mass on each point of the support (so-called \emph{uniform approximation} \cite{XB19}, or \emph{deterministic empirical approximation} \cite{C18,BJ22}) \cite{XB19,C18,BJ22}.

\bigskip
\begin{figure}[h]
\adjustbox{scale=.7,center}{
    \begin{tikzcd}[color=black,scale=.5,cells={nodes={ellipse, draw, thick, inner xsep=0pt},row sep=1cm, column sep=-3cm},
                    ]
\text{A sequence of digraphs $G^N(t)$} \arrow{rr}{\text{$N\to\infty$}} \arrow[d, "\text{coupling}" left, ->,shift right=2cm, start anchor={-40}, end anchor={40}] \arrow[d, "\text{adaptation}", <-,shift right=1cm,start anchor={-40}, end anchor={40}]
    &   &   \text{Digraph measure $\eta_t$} \arrow{d}  \\
    \text{Kuramoto network \eqref{coevolution-a}-\eqref{coevolution-b}} \arrow{rr}{\text{$N\to\infty$}}  \arrow{dr}{\text{MFL}} & & \begin{tabular}{c}
A hybrid system \eqref{characteristic-Eq-1}-\eqref{characteristic-Eq-3}\\ which is unified into \eqref{integralequation},\\ the equation of characteristics
\end{tabular}  \arrow{dl} \\
 & \text{Generalized Vlasov equation \eqref{Fixed}}    &               
    \end{tikzcd}
}
\caption{Schematic diagram of the approach of deriving MFL.}\label{f1:outline}
\end{figure}

\subsection{Organization of this paper}

We first provide necessary preliminaries on measure theory to introduce MDEs in Section~\ref{sect-preliminaries}. Next, we propose a generalized co-evolutionary Kuramoto model and investigate its well-posedness in Section~\ref{sect-characteristic}. We construct a generalized Vlasov equation and study the well-posedness of this equation in Section~\ref{sect-Vlasov}, and address the approximation of its solutions in Section~\ref{sect-th-approx}. 
 To demonstrate the applicability of the main results, we provide {a simple example} in Section~\ref{sect-example}. Proofs of the main results are given in Section~\ref{sect-proof}.  {Further in-depth discussions on limitations and extensions of the results and the approach of this paper together with some outlooks including other types of evolutionary network models are provided in Section~\ref{sect-discussion}.} 

\section{Preliminaries}\label{sect-preliminaries}

A table of notation is provided in Appendix~\ref{appendix-Notation} for the reader to check back and forth whenever necessary through reading this paper. {Let $(Y,d_Y)$ be a complete metric space.} For $\upsilon\in\mathcal{M}(Y)$, define the \emph{total variation norm}\footnote{This definition we give is twice as large as the standard one. We use it for the ease of exposition, particularly for its induced metric in comparison with the bounded Lipschitz metric, {in the light of their supremum representation}.}
 \begin{equation*}\label{eq:total-variation}\|\upsilon\|_{\TV}\coloneqq\sup_{f\in \mathcal{B}_1(Y)}\int f\rd\upsilon=\upsilon^+(Y)+\upsilon^-(Y),
 \end{equation*}
where $\upsilon^+$ and $\upsilon^-$ are the positive and negative part of $\upsilon$, respectively, owing to the Hahn decomposition \cite{B07}.  Recall from  \cite[Chapter 8]{B07} that $\cM(Y)$ with the total variation norm $\|\cdot\|_{\TV}$ is a Banach space. {Let $d_{\TV}$ be the metric induced by $\|\cdot\|_{\TV}$: For $\upsilon_1,\upsilon_2\in\cM(Y)$,
\[d_{\TV}(\upsilon_1,\upsilon_2)=\|\upsilon_1-\upsilon_2\|_{\TV}=\sup_{f\in \mathcal{B}_1(Y)}\int fd(\nu_1-\nu_2)\]} 
 {For every $\upsilon\in\cM(Y)$}, let
\[\|\upsilon\|_{\BL}\coloneqq \sup_{f\in\mathcal{BL}_1(Y)}\int_Y f\rd\upsilon,\]
the \emph{bounded Lipschitz norm} of $\upsilon$. 
{Let $d_{\BL}$ be the \emph{bounded Lipschitz metric} induced by $\|\cdot\|_{\BL}$, which admits the following supremum representation: For $\upsilon_1,\upsilon_2\in\cM(Y)$,
\[d_{\BL}(\upsilon_1,\upsilon_2)=\sup_{f\in \mathcal{BL}_1(Y)}\int fd(\upsilon_1-\upsilon_2)\]} 
Note that both $\cM_+(Y)$ and $\cP(Y)$ with the bounded Lipschitz metric are complete metric spaces \cite{B07}. Moreover, if the cardinality of $Y$ is infinite, then the topology induced by the bounded Lipschitz norm is strictly weaker than that induced by the total variation norm, and hence by Banach's theorem, $\cM$ equipped with the bounded Lipschitz norm is not complete since the two norms are \emph{not} equivalent \cite{B07}. In addition, if the complete metric space $Y$ is compact, then the bounded Lipschitz metric metrizes the weak topology, and convergence in $d_{\BL}$ also ensures the convergence in all finite moments. 

The following properties of measure-valued functions from \cite{GKX21,KX21} will be used to define the evolution of weights of a generalized {co-evolutionary} Kuramoto network in the next section (see \eqref{characteristic-Eq-1}). 

\begin{definition}
Let $(\eta_t)_{t\in\R}\subseteq\cM(Y)$. {Equip $\cM(Y)$ with the strong topology induced by the total variation norm.} If $$\lim_{\varepsilon\to0}\frac{\eta_{t+\varepsilon}-\eta_{t}}{\varepsilon}\in\cM(Y)$$ exists, then
\[\frac{\rd\eta_t}{\rd t}=\lim_{\varepsilon\to0}\frac{\eta_{t+\varepsilon}-\eta_{t}}{\varepsilon}\] is called the \emph{derivative of $\eta_t$ at $t$}.
\end{definition}
\begin{remark}
If $f\in \mathcal{C}^1(\R,(0,\infty))$ and $\xi\in\cM(Y)$, then $\eta_t=f(t)\xi\in\cM(Y)$ satisfies \begin{align*}
  \frac{\rd\eta_t}{\rd t}=&\ \frac{f'(t)}{f(t)}\eta_t,
  \end{align*}
c.f. \cite{GKX21}. Moreover, not all families of parameterized measures are differentiable (e.g., {$\{\delta_t\}_{t\in\R}$} \cite{GKX21}).
\end{remark}
Recall the following fundamental theorem of calculus for measure-valued functions \cite{GKX21}.
\begin{proposition}\label{prop-derivative}
{Let $\mathcal{N}$ be a compact interval of $\R$ containing a point $t_0\in\R$. Assume $\eta_{\cdot}\in \mathcal{C}(\mathcal{N},\cM(Y))$. Then $\xi_t=\int_{t_0}^t\eta_{\tau}\rd\tau\in\cM(Y)$, understood in the weak sense,
  \[\int_Yg\rd\xi_t=\int_{t_0}^t\left(\int_Yg\rd\eta_{\tau}\right)\rd\tau,\quad \forall g\in \mathcal{C}_{\sf b}(Y),\]
  is differentiable at $t$ for all $t\in\mathcal{N}$, where the derivative is understood as one-sided at the two endpoints of $\cN$.}
\end{proposition}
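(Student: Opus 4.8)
The plan is to recognise this as the fundamental theorem of calculus for the Banach-space–valued map $\tau\mapsto\eta_\tau$, where the Banach space is $(\cM(Y),\|\cdot\|_{\TV})$, and to read the hypothesis $\eta_\cdot\in\mathcal{C}(\cN,\cM(Y))$ as TV-continuity (consistent with the derivative being a TV-limit, and with $(\delta_t)_t$ being a non-example). All three assertions—that $\xi_t$ is a genuine element of $\cM(Y)$, that it admits the stated weak representation, and that it is differentiable with $\tfrac{\rd\xi_t}{\rd t}=\eta_t$—will then fall out of the Bochner-integral machinery, once compactness of $\cN$ is used to upgrade continuity to uniform continuity.

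First I would settle well-definedness. Since $\eta_\cdot$ is TV-continuous on the compact interval $\cN$, it is uniformly continuous, hence a uniform (in $\|\cdot\|_{\TV}$) limit of $\cM(Y)$-valued simple functions; together with the continuity, and therefore boundedness, of $\tau\mapsto\|\eta_\tau\|_{\TV}$ on $\cN$, this shows $\eta_\cdot$ is Bochner integrable. I would then \emph{define} $\xi_t$ to be the Bochner integral $\int_{t_0}^t\eta_\tau\,\rd\tau$, which is automatically an element of the Banach space $\cM(Y)$: concretely it is a TV-limit of finite combinations $\sum_i\eta_{\tau_i}(\tau_{i+1}-\tau_i)$ of finite signed measures, and such limits are again finite signed Borel measures by completeness of $(\cM(Y),\|\cdot\|_{\TV})$.

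Next I would verify the weak representation. For fixed $g\in\mathcal{C}_{\sf b}(Y)$ the evaluation map $\Phi_g\colon\nu\mapsto\int_Y g\,\rd\nu$ is a bounded linear functional on $\cM(Y)$, with $|\Phi_g(\nu)|\le\|g\|_{\infty}\|\nu\|_{\TV}$. Since bounded linear functionals commute with the Bochner integral, $\int_Y g\,\rd\xi_t=\Phi_g\bigl(\int_{t_0}^t\eta_\tau\,\rd\tau\bigr)=\int_{t_0}^t\Phi_g(\eta_\tau)\,\rd\tau=\int_{t_0}^t\bigl(\int_Y g\,\rd\eta_\tau\bigr)\rd\tau$, which is exactly the asserted identity; this reconciles the Bochner integral with the object defined in the weak sense.

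Finally, for differentiability at $t\in\cN$ (one-sided at the endpoints), I would use additivity of the Bochner integral over intervals to write $\tfrac{\xi_{t+\varepsilon}-\xi_t}{\varepsilon}-\eta_t=\tfrac{1}{\varepsilon}\int_t^{t+\varepsilon}(\eta_\tau-\eta_t)\,\rd\tau$, and then apply the Bochner norm inequality $\bigl\|\int f\,\rd\tau\bigr\|_{\TV}\le\int\|f\|_{\TV}\,\rd\tau$ to bound its TV norm by $\sup_{|\tau-t|\le|\varepsilon|,\,\tau\in\cN}\|\eta_\tau-\eta_t\|_{\TV}$, which tends to $0$ as $\varepsilon\to0$ by TV-continuity of $\eta_\cdot$ at $t$. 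The only genuine obstacle is the bookkeeping that confirms the weakly-defined $\xi_t$ really is a measure (rather than merely a bounded functional on $\mathcal{C}_{\sf b}(Y)$, which would need a Riesz-type argument if approached directly) and that it coincides with the Bochner integral; once that identification is in place, everything reduces to the standard Banach-space FTC, with compactness of $\cN$ supplying the uniformity that makes the difference-quotient bound uniform rather than merely pointwise.
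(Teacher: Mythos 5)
Your proof is correct; note that the paper itself gives no proof of this proposition (it is recalled from the reference [GKX21]), and your Bochner-integral argument---strong measurability and integrability of $\tau\mapsto\eta_\tau$ from TV-continuity on the compact interval $\cN$, commuting the bounded functional $\nu\mapsto\int_Y g\,\rd\nu$ with the integral to obtain the weak identity, and the difference-quotient bound $\bigl\|\varepsilon^{-1}(\xi_{t+\varepsilon}-\xi_t)-\eta_t\bigr\|_{\TV}\le\sup_{|\tau-t|\le|\varepsilon|,\,\tau\in\cN}\|\eta_\tau-\eta_t\|_{\TV}$---is exactly the standard route such a proof takes. The only detail worth adding explicitly is that the ``weak sense'' characterization pins down $\xi_t$ uniquely, because finite signed Borel measures on the compact metric spaces $Y=X$ or $Y=\sT$ used in this paper are determined by their integrals against bounded continuous functions, so the Bochner integral you construct is indeed the object the statement refers to.
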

{Let $X$ be a compact set of a Euclidean space $\R^r$ for some $r\in\mathbb{N}$. Equipped with the metric induced by the $\ell_1$ of $\R^r$, $X$ becomes a complete metric space. Throughout this paper, $Y=X$ or $Y=\T$.} 
\begin{definition}\label{definition-DGM}
  A signed measure-valued function $\eta\in\mathcal{B}(X,\cM(X))$ is called a {signed} digraph measure ({SDGM}).
  \end{definition}
  {We remark that this definition extends the one (where the SDGM is positive and called DGM) introduced in \cite{KX21}.} 

Next, we introduce duality of sets and measures from \cite{KX21}, which will be used to state properties on the symmetry of \emph{digraph measures}\footnote{Here ``digraph measure'' refers {to} a measure-valued function, regarded as a generalization of graphon as a graph limit \cite{KX21}.} (see Proposition~\ref{prop-symmetry} below).
\begin{definition}
Given a set $A\subseteq X^2$, {the} set $A^*=\{(x,y)\in X^2\colon (y,x)\in A\}$ is called the \emph{dual of $A$}.
\end{definition}
\begin{definition}
Given a measure $\eta\in\cM(X^2)$, {the} measure $\eta^*$ defined by
$$\eta^*(A)=\eta(A^*),\quad \forall A\in\mathfrak{B}(X^2),$$ is called the \emph{dual of $\eta$}.
\end{definition}
\begin{definition}
  A measure $\eta\in\cM(X^2)$ is \emph{symmetric} if $\eta^*=\eta$.
\end{definition}

\begin{definition}\label{definition-symmetric-DGM}
{An SDGM} is \emph{symmetric} if, viewed as a {signed} measure\footnote{$\eta$ is understood as $\eta(A\times B)=\int_A\eta^x(B)\rd x$ for $A,\ B\in\mathfrak{B}(X)$ \cite{KX21}.} on $\cM(X^2)$, it is a symmetric measure.
\end{definition}

We remark that {SDGM} is a natural way to embed a graph into a dynamical network \cite{BCW23,KX21,KX22,LS23}. This is because the dynamical network is indexed by the node variable. Since the graph is directed, the weights of adjacent outward edges from a node $i$ is associated with a vector with entries being weights $(a_{ij})_{1\le j\le N}$, and such vectors can be naturally treated {as} a fiber measure \cite{BS20}. For this reason, we introduce {SDGM} (see also \cite{KX21}). Here $X$ stands for the space of vertices, and we choose $X$ to be abstract rather than the commonly used unit interval $[0,1]$ since in some cases due to the geometric feature of the underlying {signed} graphs, the space of vertices should be chosen different from $[0,1]$. For instance, if the {signed graphs} are rings, then the space $X$ is naturally chosen to be $\mathbb{T}$ to encode the cyclic nature. For more diverse interesting graph limits with different density captured by different $X$, the reader is referred to \cite[Section~2 and Section~4]{KX21}.

{For any $\eta,\xi\in\mathcal{B}(X,\cM(Y))$, let \[\|\eta\|=\sup_{x\in X}\|\eta^x\|_{\TV},\] 
\[d_{\infty,\TV}(\eta,\xi)\coloneqq\sup_{x\in X}d_{\TV}(\eta^x,\xi^x)\]
The other metric $d_{\infty,\BL}$ for space $\mathcal{B}(X,\cM(Y))$ is defined analogously.}

{Let $\cN\subseteq\R$ be a non-empty subinterval. For any $\eta_{\cdot},\xi_{\cdot}\in\mathcal{B}(\mathcal{N},\cB(X,\cM(Y)))$, let $$\|\eta_{\cdot}\|_{\cN}=\sup_{t\in\cN}\|\eta_t\|$$ be the uniform total variation norm of $\eta_{\cdot}$ and let 
\[d^{\cN}_{\infty,\TV}(\eta,\xi) =\sup_{t\in\cN} d_{\TV,\infty}(\eta_t,\xi_t),\quad d^{\cN}_{\infty,\BL}(\eta,\xi) =\sup_{t\in\cN} d_{\BL,\infty}(\eta_t,\xi_t)\]
the uniform total variation metric and uniform bounded Lipschitz metric, respectively.}

We will use the convergence in \emph{uniform total variation distance} to prove the existence of solutions to the generalized {co-evolutionary} Kuramoto network \eqref{characteristic-Eq-1}-\eqref{characteristic-Eq-3}. In comparison, we will use the \emph{uniform bounded Lipschitz distance} inducing the uniform weak topology for the approximation result in Section~\ref{sect-th-approx}.

Next we introduce the notion of uniform weak continuity, which will be used to define solutions of the VE.  
\begin{definition}
Let $Y=X$ or $Y=\T$.
Given $$\mathcal{B}(X,\cM_+(Y))\ni\eta\colon\begin{cases}
  X\to \cM_+(Y),\\ x\mapsto\eta^x,
\end{cases}$$ we say that $\eta$ is  {\emph{weakly continuous in $x$}} if for every $f\in\mathcal{C}(Y)$, we have
\[\mathcal{C}(X)\ni\eta(f)\colon\begin{cases}
  X\to\R,\\ x\mapsto\eta^x(f)\coloneqq\int_{Y}f\rd\eta^x.
\end{cases}\]
\end{definition}

\begin{definition} 
Let $\cI\subseteq\R$ be a non-empty compact interval, and $Y=X$ or $Y=\T$.
 Given $$\eta_{\cdot}\colon\begin{cases}
  \mathcal{I}\to \mathcal{B}(X,\cM_+(Y)),\\ t\mapsto\eta_t,
\end{cases}$$ we say that  $\eta_{\cdot}$ is  {\emph{uniformly weakly continuous in $t$}} if for every $f\in \mathcal{C}(Y)$, $t\mapsto\eta_t^x(f)$ is continuous in $t$ uniformly in $x\in X$.
\end{definition}

\begin{proposition}\label{prop-nu}
Let  $\cN\subseteq\R$ be a non-empty compact interval.
\begin{enumerate}
\item[\textnormal{(i)}] Let $\eta_{\cdot}\colon \mathcal{N}\to{\mathcal{B}(X,\cM_+(Y))}$. 
   Then $\eta_{\cdot}$ is uniformly weakly continuous {in $t$} if and only if $\eta_{\cdot}\in \mathcal{C}(\cN,{\mathcal{B}(X,\cM_+(Y))})$.
\item[\textnormal{(ii)}] Assume $\eta_{\cdot},\ \xi_{\cdot}\in \mathcal{C}(\cN,{\mathcal{B}(X,\cM_+(Y))})$, then
   $\|\eta_{\cdot}\|<\infty$ and $t\mapsto d_{\infty,\BL}(\eta_t,\xi_t)$ is continuous.
\item[\textnormal{(iii)}] Assume $\eta\in{\mathcal{C}(X,\cM(Y))}$. 
Then $\eta$ is weakly continuous {in $x$}.
\end{enumerate}
\end{proposition}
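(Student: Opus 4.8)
The plan is to prove each of the three statements in Proposition~\ref{prop-nu} by reducing uniform weak continuity to the bounded Lipschitz metric, exploiting that on the compact space $X$ the metric $d_{\BL}$ metrizes the weak topology and controls all finite moments. Throughout I will use the uniform mass normalization built into the definition of $\cB_\infty$, namely $\int_X \eta_t^x(X)\,\rd\mu_X(x)=1$, to get the uniform bound in part (ii), and I will use that $\cM_{+,\gamma}(X)$ (a total-mass ball) is the relevant target whenever masses are controlled.

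\textbf{Part (i).} First I would fix the equivalence $\eta_\cdot$ uniformly weakly continuous $\iff$ $\eta_\cdot\in\mathcal{C}(\cI,\mathcal{B}(X,\cM_+(X)))$ with the latter understood via the uniform metric $\mathbf{d}_{\BL}$ on $\mathcal{B}(X,\cM_+(X))$. For the forward direction, given $t\mapsto\eta_t^x(f)$ continuous uniformly in $x$ for each fixed $f$, I would pass to a countable dense family $\{f_k\}\subseteq\mathcal{BL}_1(X)$ (which exists since $X$ is compact, so $\mathcal{C}(X)$ is separable) and write
\[
\mathbf{d}_{\BL}(\eta_t,\eta_s)=\sup_{x\in X}\sup_{g\in\mathcal{BL}_1(X)}\int_X g\,\rd(\eta_t^x-\eta_s^x).
\]
The issue is that the supremum over $g$ cannot be reduced to the countable family uniformly without an equicontinuity/tightness input; here the uniform mass bound from $\cB_\infty$ supplies tightness on the compact $X$, so an $\varepsilon/3$ argument using finitely many $f_k$ closes the gap. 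The reverse direction is immediate: $\mathbf{d}_{\BL}$-continuity dominates $|\eta_t^x(f)-\eta_s^x(f)|\le\BL(f)\,\mathbf{d}_{\BL}(\eta_t,\eta_s)$ for every $x$, giving uniform-in-$x$ continuity.

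\textbf{Parts (ii) and (iii).} For (ii), the uniform bound $\|\eta_\cdot\|<\infty$ I would get from continuity on the compact interval $\cI$: the map $t\mapsto\|\eta_t\|=\sup_x\|\eta_t^x\|_{\TV}=\sup_x\eta_t^x(X)$ (positivity makes $\|\cdot\|_{\TV}$ equal the mass) is controlled through the weak continuity applied to $f\equiv 1\in\mathcal{C}(X)$, so compactness of $\cI$ yields a finite supremum. The continuity of $t\mapsto\mathbf{d}_{\BL}(\eta_t,\xi_t)$ follows from the triangle inequality $|\mathbf{d}_{\BL}(\eta_t,\xi_t)-\mathbf{d}_{\BL}(\eta_s,\xi_s)|\le\mathbf{d}_{\BL}(\eta_t,\eta_s)+\mathbf{d}_{\BL}(\xi_t,\xi_s)$ together with part (i). For (iii), given $\eta\in\cC_\infty$, I would show $x\mapsto\eta^x(f)$ is continuous for every $f\in\mathcal{C}(X)$: by definition $\eta\in\mathcal{C}(X,\cM_+(X))$ under $d_{\BL}$, so $|\eta^x(f)-\eta^{x'}(f)|\le\BL(f)\,d_{\BL}(\eta^x,\eta^{x'})\to 0$ as $x'\to x$, which is exactly weak continuity.

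The main obstacle I anticipate is the uniformity of the supremum over test functions $g\in\mathcal{BL}_1(X)$ in part (i); pointwise-in-$f$ continuity does not automatically upgrade to $\mathbf{d}_{\BL}$-continuity without an equicontinuity or tightness mechanism. The resolution rests on two features already available in the setup: the compactness of $X$ (so $\mathcal{C}(X)$ is separable and a finite net of test functions suffices after an $\varepsilon/3$ reduction) and the uniform mass normalization encoded in $\cB_\infty$ (ensuring the family $\{\eta_t^x\}$ is uniformly tight, so the reduction to finitely many test functions is genuinely uniform in both $t$ and $x$). Once this uniform reduction is in place, the remaining estimates are routine triangle-inequality bookkeeping.
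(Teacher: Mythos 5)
The paper itself does not prove this proposition; it only points to \cite[Proposition~3.9]{KX21}, so there is no in-text argument to compare against. Your route is the standard one and is sound in outline: reduce the supremum over $\mathcal{BL}_1(X)$ to a finite net (Arzel\`a--Ascoli makes $\mathcal{BL}_1(X)$ totally bounded in the uniform norm on the compact space $X$), run an $\varepsilon/3$ argument, and use the trivial inequality $|\int f\,\rd(\mu-\nu)|\le \BL(f)\,d_{\BL}(\mu,\nu)$ for the converse and for (iii). Two imprecisions are worth repairing. First, the uniform mass bound you invoke does \emph{not} come from the $\cB_\infty$ normalization, which is only the integral condition $\int_X\eta_t^x(\T)\,\rd\mu_X(x)=1$ and says nothing about $\sup_x\eta_t^x(\T)$; what you actually need is that $\eta_t\in\mathcal{B}(X,\cM_+(\T))$ gives $\sup_x\eta_t^x(\T)<\infty$ for each fixed $t$, and then uniform weak continuity applied to $f\equiv 1$ together with compactness of $\cI$ upgrades this to $\sup_{t,x}\eta_t^x(\T)<\infty$ (a chaining argument over a finite cover of $\cI$). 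Tightness is automatic on a compact space and is not the operative mechanism. Second, in the reverse direction of (i) and in (iii) you write $|\eta_t^x(f)-\eta_s^x(f)|\le\BL(f)\,\mathbf{d}_{\BL}(\eta_t,\eta_s)$ for $f\in\mathcal{C}(X)$, but $\BL(f)$ is finite only for Lipschitz $f$, whereas the definitions of (uniform) weak continuity quantify over all continuous test functions; you need one more uniform approximation of a general $f\in\mathcal{C}(X)$ by Lipschitz functions, again using the uniform mass bound to control the error. Both fixes are routine, so I would classify these as gaps in exposition rather than in the method.
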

\begin{proof}
 { The proofs of (i)-(ii) are analogous to that of \cite[{Proposition~2.9}]{KX21} (see also \cite[ Proposition~2.3]{KX22}) and hence are omitted. For (iii), similarly, one can first show that if $\eta\in{\mathcal{C}(X,\cM_+(Y))}$, then $\eta$ is weakly continuous in $x$. Note that $\eta\in{\mathcal{C}(X,\cM(Y))}$ implies $\eta^+,\ \eta^-\in{\mathcal{C}(X,\cM_+(Y))}$, where $\eta^x=(\eta^+)^x-(\eta^-)^x$ is the Hahn decomposition of $\eta^x$ for each $x\in X$. Hence $\eta$  is weakly continuous in $x$ since so are both $\eta^+$ and $\eta^-$. It is completely analogous as for the case of real-valued functions to show that $\eta^+$ and $\eta^-$ are continuous and we leave it to the interested reader (indeed the positive part and the negative part as maps between two metric spaces are Lipschitz-1 and hence as a composition, $\eta^+,\ \eta^-$ are also continuous).} 
\end{proof}

\section{Generalized {co-evolutionary} Kuramoto network}\label{sect-characteristic}

We first reformulate several assumptions of this paper, which we are going to use in the paper. {Let $$\cB_{\infty}\coloneqq\{\xi\in\mathcal{B}(X,\cM_+(\T))\colon \int_{\T}\xi^x(\T)\rd x=1\}$$ and $$\cC_{\infty}\coloneqq\{\xi\in\mathcal{C}(X,\cM_+(\T))\colon \int_{\T}\xi^x(\T)\rd x=1\}$$ As we will set, the solutions of the generalized Vlasov equations (see Section~\ref{sect-Vlasov}) in will take values in these two sets, both of which  are closed subsets of $\mathcal{B}(X,\cM_+(\T))$, and hence are complete under the uniform metric $d_{\infty,\BL}$.} 
\begin{itemize}
\item[$\mathbf{(A1)}$] {$X\subseteq\R^r$ is a compact subset of unit Lebesgue measure for some $r\in\N$.} 
	\item[$\mathbf{(A2)}$] $g\colon \mathbb{T}\to\R$ is  Lipschitz continuous\footnote{Equivalently, $g$ can be extended to be a period-1 (coordinate-wise) Lipschitz continuous function on $\R$. Similarly for $h$.}: For $\phi,\varphi\in\mathbb{T}$,
	\begin{equation*}
	|g(\phi) - g(\varphi)| \le \Lip(g)d_{\mathbb{T}}(\phi,\varphi).
	\end{equation*}
	\item[$\mathbf{(A3)}$] $h\colon \mathbb{T}\to\R$ is Lipschitz continuous: For $\phi,\varphi\in\mathbb{T}$,
	\begin{equation*}
	|h(\phi) - h(\varphi)| \le \Lip(h)d_{\mathbb{T}}(\phi,\varphi).
	\end{equation*}
	\item[$\mathbf{(A4)}$] $\omega\colon \R\times X\to\R$ is continuous in {the first variable} $t$ and for every compact interval $\cN\subseteq\R$, $$\|\omega\|_{\cN,\infty}\coloneqq\sup\limits_{s\in \mathcal{N}}\|\omega(s,\cdot)\|_{\infty}\coloneqq\sup\limits_{s\in \mathcal{N}}\sup_{x\in X}|\omega(s,x)|<\infty.$$
	\item[{$\mathbf{(A5)}$}] $\eta_0\in \mathcal{C}(X,\cM(X))$.
\item[{$\mathbf{(A6)}$}] $\nu_{\cdot}\in\mathcal{C}_{\sf b}(\R,\cB_{\infty})$.
\item[{$\mathbf{(A4)'}$}] $\omega\colon \R\times X\to\R$ is continuous  in {the second variable} $x$.
\item[{$\mathbf{(A6)'}$}] $\nu_{\cdot}\in\mathcal{C}_{\sf b}(\R,\cC_{\infty})$.
   \end{itemize}

We make some comments on the above assumptions. $\mathbf{(A1)}$ ensures the vertex space is compact. Such compactness is important in establishing estimates in general for the main results \cite{KX21}. {For the ease of exposition, here we choose the Lebesgue measure $\lambda$ of $\R^r$ as the reference probability measure. One can simply relax the assumption $\mathbf{(A1)}$ so that $(X,\mathfrak{B}(X),\lambda)$ is a compact probability space  equipped with an arbitrary reference Borel probability measure $\mu_X$ on $X$. For instance, one can choose $(X,\mu_X)$ to be the unit sphere $\mathbb{S}^{r-1}$ with the normalized Haar measure on $\mathbb{S}^{r-1}$.} 
{Assumptions $\mathbf{(A2)}$-$\mathbf{(A6)}$ as well as $\mathbf{(A4)'}$ and $\mathbf{(A6)'}$ are the regularity conditions of the model. Among them $\mathbf{(A5)}$ is used for the approximation result, which can be relaxed to $\eta_0\in\cB(X,\cM(X))$ particularly for the results on the well-posedness \cite{KX21}. Nevertheless, for the ease of exposition, we will prefer not to distinguish the subtle difference.}

The following well-posedness of the {co-evolutionary} Kuramoto network \eqref{coevolution-a}-\eqref{coevolution-b} is an easy consequence of the Picard-Lindel\"{o}f iteration.
\begin{proposition}\label{prop-discrete-characteristic}
  Assume $\mathbf{(A2)}$-{$\mathbf{(A4)}$}. Then there exists a global solution to the {initial value problem}  of \eqref{coevolution-a}-\eqref{coevolution-b}.
\end{proposition}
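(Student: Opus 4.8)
The plan is to establish global existence and uniqueness for the finite-dimensional ODE system \eqref{coevolution-a}-\eqref{coevolution-b} by verifying the hypotheses of the standard Picard--Lindel\"{o}f theorem together with an a priori bound that precludes finite-time blow-up. First I would assemble the state variable $z=(\phi,W)\in\mathbb{T}^N\times\R^{N^2}$ and write the system as $\dot z=F(t,z)$, where the $\phi$-components are given by the right-hand side of \eqref{coevolution-a} and the $W$-components by \eqref{coevolution-b}. The right-hand side depends on $t$ only through the natural frequencies $\omega_i(t)$; since the proposition assumes only $\mathbf{(A2)}$--$\mathbf{(A3)}$ and does not invoke $\mathbf{(A4)}$, I would read $\omega_i(t)$ as given continuous (hence locally bounded) functions of $t$, so that $F$ is continuous in $t$ for each fixed $z$.

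The key step is local Lipschitz continuity of $F$ in $z$, uniformly on compact $t$-intervals. Here the Lipschitz assumptions $\mathbf{(A2)}$ on $g$ and $\mathbf{(A3)}$ on $h$ do the main work. For the $\phi$-equation, the map $(\phi,W)\mapsto\frac{1}{N}\sum_j W_{ij}g(\phi_j-\phi_i)$ is a sum of products of the bounded Lipschitz function $g$ (note $\|g\|_\infty<\infty$ since $\mathbb{T}$ is compact and $g$ is continuous) with the linear coordinate $W_{ij}$; on any bounded set of $W$-values this is Lipschitz in $(\phi,W)$, using $\mathbf{(A2)}$ for the $\phi$-dependence and boundedness of $g$ for the $W$-dependence. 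For the $W$-equation, $-\varepsilon(W_{ij}+h(\phi_j-\phi_i))$ is affine in $W_{ij}$ and Lipschitz in $\phi$ by $\mathbf{(A3)}$. Thus $F$ is locally Lipschitz in $z$ and continuous in $t$, so Picard--Lindel\"{o}f yields a unique maximal solution on some interval $[0,t_{\max})$.

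The remaining step, which I expect to be the genuine (though mild) obstacle to a one-line appeal to Picard--Lindel\"{o}f, is ruling out finite-time blow-up so that $t_{\max}=+\infty$. The $\phi_i$ live on the compact torus $\mathbb{T}$, so they cannot escape; the only possible blow-up is in the weights $W_{ij}$. The crucial observation is that the $W$-equation \eqref{coevolution-b} is \emph{linear} in $W_{ij}$ with a bounded forcing term: writing $u=W_{ij}$ we have $\dot u=-\varepsilon u-\varepsilon h(\phi_j-\phi_i)$, and since $|h(\phi_j-\phi_i)|\le\|h\|_\infty<\infty$, the explicit variation-of-constants representation $W_{ij}(t)=\e^{-\varepsilon t}W_{ij}(0)-\varepsilon\int_0^t\e^{-\varepsilon(t-s)}h(\phi_j(s)-\phi_i(s))\,\rd s$ gives the uniform bound $|W_{ij}(t)|\le|W_{ij}(0)|+\|h^+\|_\infty$-type estimate, in any case $\sup_{t\ge0}|W_{ij}(t)|<\infty$. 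Hence the solution stays in a fixed bounded region for all finite times, so it cannot blow up and $t_{\max}=+\infty$.

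To finish, I would combine these: local existence and uniqueness from the local Lipschitz property, and the a priori bound (torus-compactness for $\phi$, the dissipative linear structure of \eqref{coevolution-b} for $W$) to extend the unique local solution to a global one on $[0,\infty)$. No delicate Gronwall argument is needed for this finite-$N$ statement, precisely because the weight dynamics decouples into $N^2$ scalar linear ODEs with bounded inhomogeneities; indeed this same decoupling is exactly the structural feature the introduction flags as the basis for the variation-of-parameters formula used later at the mean field level. The whole argument is routine, which matches the paper's own description of it as ``an easy consequence of the Picard--Lindel\"{o}f iteration.''
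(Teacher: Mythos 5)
Your proof is correct. The one genuine obstacle you identify --- ruling out finite-time blow-up of the weights --- is handled properly: $\phi$ lives on the compact torus, and the variation-of-constants representation $W_{ij}(t)=\e^{-\varepsilon t}W_{ij}(0)-\varepsilon\int_0^t\e^{-\varepsilon(t-s)}h(\phi_j(s)-\phi_i(s))\,\rd s$ together with $\|h\|_\infty<\infty$ gives $\sup_{t\ge 0}|W_{ij}(t)|\le |W_{ij}(0)|+\|h\|_\infty$, so the maximal solution is global. Your reading of the $t$-dependence (continuity of $\omega_i$) is also the right way to patch the fact that the proposition formally assumes only $\mathbf{(A2)}$--$\mathbf{(A3)}$.

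The route differs from the paper's in a way worth noting. The paper's proof in Appendix~\ref{appendix-prop-discrete-characteristic} cites Picard--Lindel\"{o}f in one line and then spends its effort on something else entirely: it exhibits the finite-$N$ system as a special case of the generalized hybrid system \eqref{characteristic-Eq-1}--\eqref{characteristic-Eq-3}, by taking $X=[0,1]$ with the uniform partition, $\nu_t^x=\delta_{\phi_i(t)}$, and $\rd\eta_t^x(y)=W_{ij}(t)\,\rd y$, so that the proposition becomes a corollary of Theorem~\ref{thm-characteristic-1} (whose proof is a Banach contraction argument in a space of measure-valued functions, with the a priori bound on $\|\eta_t\|$ appearing there as Step~4). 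What the paper's approach buys is consistency with the rest of the framework --- the same embedding is reused later to connect the ODE system to the mean field limit --- at the cost of invoking heavier machinery and additional assumptions ($\mathbf{(A1)}$, $\mathbf{(A4)}$, $\mathbf{(A5)}$) than the proposition states. Your argument buys a self-contained, elementary finite-dimensional proof that uses exactly the stated hypotheses plus continuity of $\omega_i$, and makes the no-blow-up step explicit rather than inheriting it from the general theorem. Both are valid.
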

{The reader may refer to Appendix~\ref{appendix-prop-discrete-characteristic} for a proof of Proposition~\ref{prop-discrete-characteristic}.}

Next, we investigate the properties of a generalized {co-evolutionary} Kuramoto model, which naturally extends \eqref{coevolution-a}-\eqref{coevolution-b}.  For the ease of exposition, we choose the initial time 
to be $0$ and let $(\phi_0,\eta_0)\in \mathcal{B}(X,\mathbb{T}\times\cM(X))$. Consider the following generalized {co-evolutionary} Kuramoto network:
\begin{alignat}{2}\label{characteristic-Eq-1}
\frac{\partial\phi(t,x)}{\partial t} = &\ \omega(t,x)+\int_X\int_{\mathbb{T}}g(\psi-\phi(t,x))
\rd\nu_{t}^y(\psi)\rd\eta_{t}^x(y),\quad t\in\cI,\ x\in X,\\
\label{characteristic-Eq-2}
\frac{\partial\eta_t^x}{\partial t}(\sbullet)=& -\varepsilon\eta_{t}^x(\sbullet)
-\Bigl(\varepsilon\int_{\mathbb{T}}h(\psi-\phi(t,x))\rd\nu_{t}^{\sbullet}(\psi)\Bigr)\lambda(\sbullet),\quad t\in\cI,\ x\in X,\\
\label{characteristic-Eq-3}
\phi(0,x)=&\ \phi_0(x),\quad {\eta_{t}^x|_{t=0}}=\ \eta_0^x,
\end{alignat}
which is interpretated in a (weaker) sense as an integral equation: For $t\in\mathcal{I}$, $x\in X$,
\begin{alignat}{2}\label{characteristic-Eq-int-1}
\phi(t,x) = &\ \phi_0(x)+\int_{0}^t\left[\omega(\tau,x)+\int_X\int_{\mathbb{T}}g(\psi-\phi(\tau,x))
\rd\nu_{\tau}^y(\psi)\rd\eta_{\tau}^x(y)
\right]\rd\tau \mod 1,\\
\label{characteristic-Eq-int-2}
\eta_t^x(\sbullet) = &\ \eta_0^x(\sbullet)-\left(\varepsilon\int_{0}^t\eta_{\tau}^x\rd\tau\right)(\sbullet)
-\Bigl(\varepsilon\int_{0}^t\left(\int_{\mathbb{T}}
h(\psi-\phi(\tau,x))\rd\nu_{\tau}^{\sbullet}(\psi)\right)\rd\tau\Bigr)\lambda(\sbullet),
\end{alignat}
where by Proposition~\ref{prop-derivative}, the equation \eqref{characteristic-Eq-int-2} is understood in the weak sense
\begin{multline}\label{characteristic-Int}
  \int_Xf(y)\rd\eta_t^x(y)=\int_Xf(y)\rd\eta_0^x(y)-\varepsilon\int_{0}^t
  \left(\int_Xf(y)\rd\eta_{\tau}^x(y)\right)\rd\tau\\
\ -\varepsilon\int_{0}^t\left(\int_Xf(y)\int_{\mathbb{T}}h(\psi-\phi(\tau,x))\rd\nu_{\tau}^y(\psi)\right)
\rd y\rd\tau,\quad \forall f\in \mathcal{C}(X).
\end{multline}

The above equation \eqref{characteristic-Int} for measures $\eta_t^{x}$ is well-defined. Indeed, since $h$ is continuous {and $\phi(t,x)$ is continuous in $t$ (to be shown in Theorem~\ref{thm-characteristic-1} below)}, by Proposition~\ref{prop-nu}(i) and {$\mathbf{(A6)'}$}, we have $h(\cdot-\phi(t,x))$ is integrable w.r.t. $\nu_{t}^y$, $\varepsilon\int_{\mathbb{T}}h(\psi-\phi(t,x))\rd\nu_{t}^y(\psi)$ is continuous $t$ and is integrable w.r.t. $\lambda$ so that $\Bigl(\varepsilon\int_{\mathbb{T}}h(\psi-\phi(t,x))\rd\nu_{t}^{\sbullet}(\psi)\Bigr)\lambda(\sbullet)$ defines a measure in $\cM(X)$ absolutely continuous w.r.t. $\lambda$.

It is easy to verify that one can recover the {co-evolutionary} network \eqref{coevolution-a}-\eqref{coevolution-b} by substituting specific $X$, $\lambda$, $\phi$, $\eta_{\cdot}$, and $\nu_{\cdot}$ into the characteristic equation \eqref{characteristic-Eq-1}-\eqref{characteristic-Eq-3}  (see Appendix~\ref{appendix-prop-discrete-characteristic} for details). {Indeed, \eqref{characteristic-Eq-1}-\eqref{characteristic-Eq-3} is an analogue of the McKean stochastic differential equation (SDE) when noise is added, where $\nu_t$ (i.e., the collection of fiber measures $\{\nu_t^x\}_{x\in X}$) that induces the charateristic equation corresponds to the distribution of the system: In the SDE case, it precisely corresponds to  the distribution of the McKean SDE; in the ODE case in our paper, it would correspond to the empirical distribution (see \eqref{eq:empirical-distribution} in Section~\eqref{sect-example}).} 

The rationale of using \eqref{characteristic-Eq-2} to describe the evolution of weights naturally is motivated by \cite{BS20}, where fiber measures are generalizations of vectors of weights of a vertex of a graph. Such a family  of measure differential equations are parameterized by vertices naturally appear in the context of dynamical networks \cite{BCW23,KX21,LS23}.

\begin{definition}
  A pair $(\phi,\eta_{\cdot})\in \mathcal{C}(\R,\mathcal{B}(X,\mathbb{T}\times\cM(X)))$  is called a \emph{global solution} to the {initial value problem (IVP)} of {\eqref{characteristic-Eq-1}-\eqref{characteristic-Eq-3}} if it satisfies \eqref{characteristic-Eq-int-1}-\eqref{characteristic-Eq-int-2} for all $x\in X$ and $t\in\R$.
\end{definition}
\begin{definition}
  Let $T>0$ and $\cI=[0,T]$. A pair $(\phi,\eta)\in \mathcal{C}(\cI,\mathcal{B}(X,\mathbb{T}\times\cM(X)))$  is called a \emph{local solution} to the IVP of {\eqref{characteristic-Eq-1}-\eqref{characteristic-Eq-3}} if it satisfies \eqref{characteristic-Eq-int-1}-\eqref{characteristic-Eq-int-2} for all $x\in X$ and $t\in\cI$.
\end{definition}

The following result provides well-posedness of the generalized {co-evolutionary} Kuramoto network \eqref{characteristic-Eq-1}-\eqref{characteristic-Eq-3}.

\begin{theorem}\label{thm-characteristic-1}
  Assume $(\mathbf{A1})$-$(\mathbf{A6})$. Let $(\phi_0,\eta_0)\in \mathcal{B}(X,\mathbb{T}\times\cM(X))$. Then there exists a unique global solution {$\mathcal{T}_{t}(\phi_0,\eta_0)=(\mathcal{T}^1_{t}(\phi_0,\eta_0),\mathcal{T}^2_{t}(\phi_0,\eta_0))$} in $\mathcal{B}(X,\mathbb{T}\times\cM(X))$ to \eqref{characteristic-Eq-1}-\eqref{characteristic-Eq-3}. 
   In particular,
     if $(\mathbf{A4})'$ holds and $(\phi_0,\eta_0)\in \mathcal{C}(X,\mathbb{T}\times\cM(X))$, then $\mathcal{T}_{t}(\phi_0,\eta_0)\in \mathcal{C}(X,\mathbb{T}\times\cM(X))$ for all $t\in\R$.
\end{theorem}
The proof is provided in Subsection~\ref{subsect-proof-of-characteristic}.
\begin{remark}
Note that the digraph measures may not be regular enough, so that the convolution of the test function with the fiber measure may not be smooth but rather just measurable. Hence, {the bounded Lipschitz metric will not be a good choice while the total variation norm seems to be a natural choice as measurability of test functions suffices}. Nevertheless, such norm is only well-suited for the existence of solutions to the characteristic equation. {As will be seen below, {in contrast}, bounded Lipschitz metric is used to establish the well-posedness and approximation of solutions to the generalized Vlasov equation {(see Section~\ref{sect-Vlasov})}.} 
\end{remark}

The following property demonstrates that the symmetry of the evolving graph measure is preserved over time {under certain symmetry condition on $h$ and $\nu_{\cdot}$}. 

{\begin{proposition}\label{prop-symmetry}
 Assume $(\mathbf{A1})$-{$(\mathbf{A6})$}. Additionally assume $h$ and $\nu_{\cdot}$ satisfy the following symmetry condition:
 \begin{equation}\label{eq:condition-symmetry}
 \int_{\T} h(\psi-\phi(t,x))\rd\nu^y_t(\psi) = \int_{\T} h(\psi-\phi(t,y)\rd\nu^x_t(\psi),\quad \forall t\in\cI,\ \lambda\otimes\lambda\text{-a.e.}\ (x,y)\in X^2,
 \end{equation}
 where $\lambda\otimes\lambda$ is the Lebesgue measure on $\R^{2r}$.
 Let $(\phi,\eta_{\cdot})$ be the solution to the IVP  {\eqref{characteristic-Eq-1}-\eqref{characteristic-Eq-3}} 
  on $t\in \cI$.  Then $\eta_t$ is symmetric for all $t\in\cI$ provided $\eta_0$ is symmetric.  
\end{proposition}}
\begin{proof}
{Since $\eta_0$ is symmetric, in the light of \eqref{Eq-variation-2} below and Fubini Theorem, it suffices to show for each $A\in\mathfrak{B}(X^2)$, \[\int_A\int_{\T} h (\psi-\phi(\tau,x)) \rd\nu^y_{\tau}(\psi)\rd x\rd y=\int_A\int_{\T} h (\psi-\phi(\tau,y)) \rd\nu^x_{\tau}(\psi)\rd x\rd y,\] which holds provided \eqref{eq:condition-symmetry} holds.}  
\end{proof}
{\begin{example}
  Let $X=[0,1]$ and $h$ with its natural extension (still denoted $h$) being an even 1-periodic function on $\R$, and $\nu_t^x=\delta_{\phi(t,x)}$ for $t\in\cI$ and $\lambda$-a.e. $x\in[0,1]$. Then $h(u)=h(-u)$ for all $u\in\R$ and hence for all $t\in\cI$ and $x,y\in[0,1]$, \begin{multline*}\int_{\T}h(\psi-\phi(t,x))\rd\nu^y_t(\psi)=\int_{\T}h(\psi)\rd\nu_t^y(\phi(t,x)+\psi) = h(\phi(t,y)-\phi(t,x))\\
  =h(\phi(t,x)-\phi(t,y))=\int_{\T} h(\psi)\rd\nu_t^y(\phi(t,x)+\cdot)=\int_{\T}h(\psi-\phi(t,y))\rd\nu^x_t(\psi),
  \end{multline*} i.e., \eqref{eq:condition-symmetry} holds. Let $N\in\mathbb{N}$ and $W=(W_{i,j})_{1\le i,j\le N}$ be a symmetric signed matrix ($W_{ij}=W_{ji}\in\R$) and $I_i=[\tfrac{i-1}{N},\tfrac{i}{N}[$ for $i=1,\ldots,N$. Let $\phi(t,x)=\phi_i(t)$ for $x\in$, $i=1,\ldots,N$. Define $$\eta_0\colon x\mapsto \sum_{i=1}^N\mathbbm{1}_{I_i}(x)\sum_{j=1}^NW_{ij}\lambda|_{I_j},$$ where $\lambda|_{I_j}$ is the Lebesgue measure restricted to $I_j$. Then for any $A\in\mathfrak{B}(X^2)$,
  \begin{align*}
  \eta_0(A)=& \sum_{i=1}^N\sum_{j=1}^N\int_{A\cap I_i\times I_j}\rd\eta_0^x(y)\rd x=\sum_{i=1}^N\sum_{j=1}^NW_{ij}(\lambda\otimes\lambda)(A\cap I_i\times I_j)\\
  =& \sum_{i=1}^N\sum_{j=1}^NW_{ji}(\lambda\otimes\lambda)(A\cap I_j\times I_i)=\sum_{i=1}^N\sum_{j=1}^NW_{ij}(\lambda\otimes\lambda)(A\cap I_j\times I_i)\\
  =& \sum_{i=1}^N\sum_{j=1}^NW_{ij}(\lambda\otimes\lambda)(A^*\cap I_j\times I_i)=\eta_0(A^*),
  \end{align*}
  where the fourth equality uses the symmetry of $(W_{ij})_{1\le i,j\le N}$. By Definition~\ref{definition-symmetric-DGM}, $\eta_0$ is a symmetric SDGM.
\end{example}}
We have an equivalent characterization of the solutions to the characteristic equation \eqref{characteristic-Eq-1}-\eqref{characteristic-Eq-3} by an integral equation.
\begin{proposition}[Variation of constants formula]\label{prop-equivalent}
 Assume $(\mathbf{A1})$-{$(\mathbf{A6})$}. Then $(\phi,\eta)$ is a local (global, respectively) solution to \eqref{characteristic-Eq-1}-\eqref{characteristic-Eq-3} if and only if  $(\phi,\eta)$  is a local (global, respectively) solution to
\begin{alignat}{2}\nonumber
\phi(t,x)=&\ \Bigl(\phi_0(x)+\int_{0}^t\left(\omega(s,x)+\e^{-\varepsilon s}\int_X\int_{\T}g(\psi-\phi(s,x))\rd\nu_s^y(\psi)\rd\eta_0^x(y)\right.\\
\nonumber&\left.-\varepsilon\int_{0}^s\e^{-\varepsilon(t-\tau)}\int_X\left(\int_{\T}g(\psi-\phi(s,x))
\rd\nu_s^y(\psi)\right.\right.\\
\label{Eq-variation-1}&\left.\left.\cdot\int_{\T}h(\psi-\phi(\tau,x))\rd\nu_{\tau}^y(\psi)\right)\rd y\rd\tau\right)\rd s\Bigr) \mod1,\quad x\in X,\quad t\in\R,\\
\label{Eq-variation-2}  
  \eta_t^x(\sbullet)=&\ \e^{-\varepsilon t}\eta_0^x(\sbullet)-\Bigl(\varepsilon\int_{0}^t\e^{-\varepsilon(t-s)}\int_{\T}h(\psi-\phi(s,x))
  \rd\nu_{s}^{\sbullet}(\psi)\rd s\Bigr)\lambda(\sbullet),\quad  x\in X,\quad t\in\R.
\end{alignat}
\end{proposition}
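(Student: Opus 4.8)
The plan is to read the measure differential equation \eqref{characteristic-Eq-2} as a linear inhomogeneous ODE in the Banach space $(\cM(X),\|\cdot\|_{\TV})$, with constant dissipative part $-\varepsilon\,\mathrm{Id}$ and an absolutely continuous forcing, and to apply the classical variation of constants formula. To keep everything rigorous without differentiating measures by hand, I would first collapse the measure identity \eqref{Eq-variation-2} to a family of scalar ODEs by testing against $f\in\cC(X)$. Fix $x\in X$ and $f\in\cC(X)$ and set $a(t)\coloneqq\int_X f\rd\eta_t^x$. By the weak form \eqref{characteristic-Int}, together with the continuity of $t\mapsto\eta_t^x$ (built into the definition of a local solution) and of the forcing $c(t)\coloneqq\int_X f(y)\int_{\T}h(\psi-\phi(t,x))\rd\nu_t^y(\psi)\rd\mu_X(y)$ — the latter being precisely the regularity recorded right after \eqref{characteristic-Int}, via $\mathbf{(A3)}$, $\mathbf{(A5)}$ and Proposition~\ref{prop-nu}(i) — the map $a$ is continuously differentiable and solves $a'=-\varepsilon a-\varepsilon c$ with $a(t_0)=\int_X f\rd\eta_0^x$. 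The integrating factor $\e^{\varepsilon t}$ then gives $a(t)=\e^{-\varepsilon(t-t_0)}a(t_0)-\varepsilon\int_{t_0}^t\e^{-\varepsilon(t-s)}c(s)\rd s$.

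Next I would promote this scalar identity back to a measure identity. By Fubini (legitimate since $\|h\|_{\infty}<\infty$, $\mu_X$ is finite, and the time interval is bounded) the right-hand side equals $\int_X f\rd\zeta_t^x$, where $\zeta_t^x$ is exactly the candidate measure on the right of \eqref{Eq-variation-2}; recall from the paragraph following \eqref{characteristic-Int} that $\zeta_t^x\in\cM(X)$ is well defined, with a $\mu_X$-absolutely continuous forcing part. Since $X$ is compact Polish, $\cC(X)$ is measure-determining, so $a(t)=\int_X f\rd\zeta_t^x$ for every $f$ forces $\eta_t^x=\zeta_t^x$, which is \eqref{Eq-variation-2}. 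Because the argument is an equivalence at the level of the scalar ODE, this settles the graph component in both directions at once.

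With \eqref{Eq-variation-2} available I would substitute it into the phase equation \eqref{characteristic-Eq-int-1}. For fixed $s,x$ the map $y\mapsto\int_{\T}g(\psi-\phi(s,x))\rd\nu_s^y(\psi)$ is bounded by $\|g\|_{\infty}$ and measurable (by $\mathbf{(A2)}$, $\mathbf{(A5)}$ and Proposition~\ref{prop-nu}), hence integrable against both $\eta_0^x\in\cM(X)$ and the $\mu_X$-absolutely continuous forcing part. Integrating it against \eqref{Eq-variation-2} at time $s$ splits the coupling term into the free part $\e^{-\varepsilon(s-t_0)}\int_X\int_{\T}g(\psi-\phi(s,x))\rd\nu_s^y(\psi)\rd\eta_0^x(y)$ and a double integral carrying the exponential kernel $\e^{-\varepsilon(s-\tau)}$ inherited from the variation-of-constants representation of $\eta$. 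One Fubini interchange of $\int_X\rd\mu_X(y)$ with $\int_{t_0}^s\rd\tau$, followed by integration in $s$ over $[t_0,t]$, yields \eqref{Eq-variation-1}. For the converse I would differentiate \eqref{Eq-variation-2} in $t$ using the fundamental theorem of calculus for measure-valued functions (Proposition~\ref{prop-derivative}) to recover \eqref{characteristic-Eq-2}, check $\eta_{t_0}^x=\eta_0^x$ by setting $t=t_0$, and differentiate \eqref{Eq-variation-1} in $t$ — its integrand being independent of the upper limit, $\partial_t\phi$ is simply the integrand at $s=t$, and re-substituting \eqref{Eq-variation-2} collapses the free part plus the $\varepsilon$-integral back into $\int_X\int_{\T}g(\psi-\phi(t,x))\rd\nu_t^y(\psi)\rd\eta_t^x(y)$, recovering \eqref{characteristic-Eq-1}.

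The main obstacle I anticipate is bookkeeping rather than conceptual: ensuring every integrand serving as a ``test function'' is genuinely bounded and measurable so that integration against the signed measures $\eta_0^x$ and against $\mu_X$ is legitimate, and that all the Fubini interchanges (over $X\times[t_0,t]$ and over $[t_0,s]\times X$) are valid. The one genuinely structural point is the passage from the scalar variation-of-constants identity back to the measure identity \eqref{Eq-variation-2}: it rests on $\cC(X)$ being measure-determining on the compact space $X$ and on the a priori fact, noted after \eqref{characteristic-Int}, that the candidate right-hand side is a bona fide finite signed measure. Without both facts the weak identity would not pin down $\eta_t^x$ as an element of $\cM(X)$.
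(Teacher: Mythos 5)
Your proposal is correct, but it closes the equivalence by a genuinely different route than the paper. The paper only proves the forward implication directly (its Step~1 applies the integrating factor $\e^{\varepsilon t}$ to $\eta_t$ as a measure-valued function, via Proposition~\ref{prop-derivative}); it then proves \emph{uniqueness} of solutions to \eqref{Eq-variation-1}--\eqref{Eq-variation-2} by a second-order Gronwall inequality (Proposition~\ref{prop-Gronwall}) and deduces the converse from the well-posedness of the characteristic system in Theorem~\ref{thm-characteristic-1}. You instead prove both implications directly: for the graph component you run the integrating-factor computation fibrewise after testing against $f\in\cC(X)$ and recover the measure identity from $\cC(X)$ being measure-determining (essentially the same calculation as the paper's, done scalar by scalar), and for the phase component you substitute one way and differentiate plus re-substitute the other. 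Your route is more elementary and self-contained --- it needs neither the Gronwall machinery nor any appeal to existence or uniqueness for either system, which also sidesteps the slightly delicate mutual referencing between Proposition~\ref{prop-equivalent} and Theorem~\ref{thm-characteristic-1} in the paper; what the paper's route buys is that its Step~2 estimate is exactly the computation reused later for the Lipschitz dependence of the semiflow (Proposition~\ref{prop-semiflow}(iii)). One point to flag: your substitution produces the kernel $\e^{-\varepsilon(s-\tau)}$ in \eqref{Eq-variation-1}, whereas the display in the statement reads $\e^{-\varepsilon(t-\tau)}$; your version is the one that actually follows from \eqref{Eq-variation-2} (the stated kernel appears to be a typo propagated through \eqref{integralequation} and Corollary~\ref{cor-flow}), and your converse step --- differentiating \eqref{Eq-variation-1} by noting the integrand does not depend on the upper limit --- is only valid with your corrected kernel, so you should state explicitly that this is the formula you are proving.
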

The proof is given in Appendix~\ref{appendix-prop-equivalent}.

From \eqref{Eq-variation-2}, the graph measure is composed of two parts, the dilated initial measure with time-dependent dilation $e^{-\varepsilon t}$, and an absolutely continuous measure with time-dependent density $-\varepsilon\int_{0}^te^{-\varepsilon(t-\tau)}\int_{\T}h(\psi-\phi(\tau,x))\rd\nu_{\tau}^y(\psi)\rd\tau$, $y\in X$.

By Theorem~\ref{thm-characteristic-1}, let $\mathcal{T}_{t}[\nu_{\cdot},\omega]
=(\mathcal{T}^1_{t}[\nu_{\cdot},\omega],\mathcal{T}^2_{t}[\nu_{\cdot},\omega])$ denote 
{the solution map of the integral {equations \eqref{Eq-variation-1} and \eqref{Eq-variation-2}}, emphasizing the dependence on $\nu_{\cdot}$ and $\omega$}. From Proposition~\ref{prop-equivalent} we immediately get the properties of $\mathcal{T}$.

\begin{corollary}\label{cor-flow}
 Assume $(\mathbf{A1})$-{$(\mathbf{A6})$}. Then the {solution map} of \eqref{characteristic-Eq-1}-\eqref{characteristic-Eq-3} is given, for $x\in X$ and $t\in\R$, by
\begin{align*}
\mathcal{T}_{t}^{1,x}[\nu_{\cdot},\omega](\phi_0,\eta_0)=&\ \Bigl(\phi_0(x)+\int_{0}^t\omega(s,x) \rd s\\
&\left.+\int_{0}^t\e^{-\varepsilon s}\int_X\int_{\mathbb{T}}g(\psi-\mathcal{T}_{s}^{1,x}[\nu_{\cdot},\omega](\phi_0,\eta_0))
\rd\nu_s^y(\psi)\rd\eta_0^x(y) \rd s\right.\\
&-\varepsilon\int_{0}^t\int_{0}^s\e^{-\varepsilon(t-\tau)}\int_X\Bigl(\int_{\bT}g(\psi-
\mathcal{T}_{s}^{1,x}[\nu_{\cdot},\omega](\phi_0,\eta_0))\rd\nu_s^y(\psi)\\
&{\cdot}\int_{\mathbb{T}}h\bigl(\psi-\mathcal{T}_{\tau}^{1,x}
[\nu_{\cdot},\omega](\phi_0,\eta_0)\bigr)\rd\nu_{\tau}^y(\psi)\Bigr)\rd y\rd\tau\rd s\Bigr) \mod1,\\
\mathcal{T}_{t}^{2,x}[\nu_{\cdot},\omega](\phi_0,\eta_0)(\sbullet)
=&\ \e^{-\varepsilon t}\eta_0^x(\sbullet)\\
&-\Bigl(\varepsilon\int_{0}^t\e^{-\varepsilon(t-\tau)}\int_{\mathbb{T}}
 h(\psi-\mathcal{T}_{\tau}^{1,x}[\nu_{\cdot},\omega]
 (\phi_0,\eta_0))\rd\nu_{\tau}^{\sbullet}(\psi)\rd\tau\Bigr)\lambda(\sbullet).
\end{align*}
\end{corollary}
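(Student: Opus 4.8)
\textbf{Proof plan for Corollary~\ref{cor-flow}.} The plan is to read off the claimed formulas directly from the variation of constants formula, Proposition~\ref{prop-equivalent}, combined with the existence and uniqueness of the flow furnished by Theorem~\ref{thm-characteristic-1}; there is essentially no analytic content left to supply. First, by Theorem~\ref{thm-characteristic-1}, for each $t_0\in\R$ and each initial datum $(\phi_0,\eta_0)\in\mathcal{B}(X,\mathbb{T}\times\cM(X))$ there is a unique global solution to the IVP \eqref{characteristic-Eq-1}-\eqref{characteristic-Eq-3}, and the assignment $(t,t_0,(\phi_0,\eta_0))\mapsto\mathcal{T}_{t,t_0}(\phi_0,\eta_0)$ is a well-defined flow. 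Writing its two components as $\mathcal{T}^{1,x}_{t,t_0}[\nu_{\cdot},\omega](\phi_0,\eta_0)$ (the phase at vertex $x$) and $\mathcal{T}^{2,x}_{t,t_0}[\nu_{\cdot},\omega](\phi_0,\eta_0)$ (the fiber measure at $x$), one has by definition $\phi(t,x)=\mathcal{T}^{1,x}_{t,t_0}[\nu_{\cdot},\omega](\phi_0,\eta_0)$ and $\eta^x_t=\mathcal{T}^{2,x}_{t,t_0}[\nu_{\cdot},\omega](\phi_0,\eta_0)$.

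Next, Proposition~\ref{prop-equivalent} asserts that this same pair $(\phi,\eta_{\cdot})$ is equivalently the solution of the integral system \eqref{Eq-variation-1}-\eqref{Eq-variation-2}. Substituting the flow notation above into the right-hand sides of \eqref{Eq-variation-1} and \eqref{Eq-variation-2}---that is, replacing each occurrence of the phase $\phi(\,\cdot\,,x)$ inside the kernels $g$ and $h$ by the corresponding flow component $\mathcal{T}^{1,x}_{\,\cdot\,,t_0}[\nu_{\cdot},\omega](\phi_0,\eta_0)$, and relabelling dummy time variables as needed---produces verbatim the two displayed identities of the corollary. Thus the corollary is a restatement of the variation of constants formula in flow notation, and the proof reduces to this substitution.

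I expect no genuine obstacle, since all the regularity and well-posedness work has already been carried out in Theorem~\ref{thm-characteristic-1} and Proposition~\ref{prop-equivalent}; in particular the well-definedness of every integral appearing on the right-hand sides (including the absolute continuity of the measure-valued term with respect to $\mu_X$) is guaranteed there under $(\mathbf{A1})$-$(\mathbf{A5})$. The only point worth flagging is that the resulting expressions are \emph{implicit}: the phase component $\mathcal{T}^{1,x}$ appears inside $g$ and $h$ on the right, so these are fixed-point characterizations of the flow rather than closed-form evaluations. Accordingly the corollary should be read as an identity satisfied by the already-constructed flow, and no further uniqueness or measurability verification is needed beyond what Proposition~\ref{prop-equivalent} provides.
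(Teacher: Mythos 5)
Your proposal is correct and matches the paper exactly: the paper gives no separate proof of this corollary, stating only that it follows immediately from Proposition~\ref{prop-equivalent} by writing the unique solution $(\phi,\eta_\cdot)$ of Theorem~\ref{thm-characteristic-1} in flow notation and substituting into \eqref{Eq-variation-1}--\eqref{Eq-variation-2}. Your remark that the resulting identities are implicit (fixed-point) characterizations rather than closed forms is accurate and consistent with how the paper subsequently uses them to define the semiflow $\mathcal{S}$.
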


To investigate the mean field behavior of the {co-evolutionary} Kuramoto model on heterogeneous networks, one typically needs to construct a Vlasov-type equation via some fixed point equation \cite{N84}. A simple look into the generalized co-evolutionary  {Kuramoto} network reveals that such MFLs may have support on an infinite dimensional space (some measure space, for the sake of the second component\---the graph measure). 
In order to get {around} this difficulty/complexity, in the following we decouple the characteristic equation using Corollary~\ref{cor-flow} so that we embed the dynamic nature of the underlying graph measure into the dynamics of the oscillators. In this way, we come up with a one-dimensional integral equation on the circle, and can turn to study the MFL for this integral model coupled on static initial graph measures.

Furthermore, for every fixed initial {SDGM} $\eta_0\in\mathcal{C}(X,\cM(X))$, {for any given $T>0$ and $\cI=[0,T]$, we define a family of parameterized operators: For every $t\in\cI$,}
\[{\begin{cases} \mathcal{S}_t[\eta_0,\nu_{\cdot},\omega]\colon \mathcal{B}(X,\mathbb{T})\to \mathcal{B}(X,\mathbb{T})\\
\mathcal{S}^x_{t}[\eta_0,\nu_{\cdot},\omega](\phi_0)
=\mathcal{T}_{t}^{1,x}[\nu_{\cdot},\omega](\phi_0,\eta_0),\quad x\in X,
\end{cases}}
\]
that is, the solution map of the following integral equation:
\begin{equation}\label{integralequation}
\begin{split}
\phi(t,x)=&\ \Bigl(\phi_0(x)+\int_{0}^t\Bigl(\omega(s,x)+\e^{-\varepsilon s}\int_X\int_{\mathbb{T}}g(\psi-\phi(s,x))
\rd\nu_s^y(\psi)\rd\eta_0^x(y)\\
&-\varepsilon\int_{0}^s\e^{-\varepsilon(t-\tau)}\int_X\int_{\mathbb{T}}g(\psi-
\phi(s,x))\rd\nu_s^y(\psi)\\
&\cdot\int_{\mathbb{T}}h(\psi-\phi(\tau,x))\rd\nu_{\tau}^y(\psi))
\rd y\rd\tau\Bigr)\rd s\Bigr)\!\!\!\mod 1
\end{split}\end{equation}

We remark that \eqref{integralequation} is one-dimensional, which makes it possible to generate a fixed point equation induced by the semiflow of \eqref{integralequation}, and we can use this to study the mean field dynamics of the original coupled hybrid characteristic equation \eqref{characteristic-Eq-1}-\eqref{characteristic-Eq-3}. Note that \eqref{integralequation} is generally referred to as \emph{equation of mean field characteristics} \cite{G13}, and  
{in case it generates a \emph{flow}, such flow is named as ``mean field characteristic flow'' \cite{G13}. Nevertheless, the semiflow of \eqref{integralequation} may not necessarily be a \emph{flow}, unless in special cases, 
e.g., when $h$ is a constant, i.e., when coevolution disappears despite evolution of the underlying graph.}
 Indeed, it is possible that multiple different initial graph measures $\eta_0$ contribute to the same phase at future time.

In order to fully investigate the well-posedness of solutions to a fixed point equation, we need to rely on some continuity properties of the operator $\mathcal{S}$.
\begin{proposition}\label{prop-semiflow}
 Assume $(\mathbf{A1})$-$(\mathbf{A6})$.  Let $T>0$ {and $\cI=[0,T]$.}
  \begin{enumerate}
    \item[(i)] $\mathcal{S}_{t}^x[\eta_0,\nu_{\cdot},\omega]$ is continuous in $x$: For $\phi_0\in{\cC(X,\T)}$,
        \[\lim_{{|x-y|}\to0}|\mathcal{S}_{t}^x[\eta_0,\nu_{\cdot},\omega]{(\phi_0)}
        -\mathcal{S}_{t}^y[\eta_0,\nu_{\cdot},\omega]{(\phi_0)}|=0,\] provided $(\mathbf{A4})'$ holds.
    \item[(ii)] $\mathcal{S}_{t}^x[\eta_0,\nu_{\cdot},\omega]$ is Lipschitz continuous in $t$: For $\phi_0\in{\cB(X,\T)}$, for $t_1,\ t_2\in\cI$, $$|\mathcal{S}_{t_1}^{x}[\eta_0,\nu_{\cdot},\omega](\phi_0)
-\mathcal{S}_{t_2}^{x}[\eta_0,\nu_{\cdot},\omega](\phi_0)|\le L_1(\nu_{\cdot})|t_1-t_2|,$$ where $L_1(\nu_{\cdot})=\|\omega\|_{\infty}+\|g\|_{\infty}{\|\nu_{\cdot}\|_{\cI}}\|\eta_0\|
+(\tfrac{1}{2}T^2+1)\varepsilon\|g\|_{\infty}\|h\|_{\infty}({\|\nu_{\cdot}\|_{\cI}})^2$.
    \item[(iii)] $\mathcal{S}_{t}^x[\eta_0,\nu_{\cdot},\omega](\phi_0)$ is Lipschitz continuous in $\phi$:
    For $\phi_0,\ \varphi_0\in\mathcal{C}(X,\mathbb{T})$, $${\sup_{x\in X}|\mathcal{S}_{t}^{x}[\eta_0,\nu_{\cdot},\omega](\phi_0)
-\mathcal{S}_{t}^{x}[\eta_0,\nu_{\cdot},\omega](\varphi_0)|\le \e^{L_2(\nu_{\cdot})t}\|\phi_0-\varphi_0\|_{\infty},}$$where $L_2(\nu_{\cdot})=C_1(\nu_{\cdot})+
\frac{\varepsilon\|g\|_{\infty}\Lip(h)({\|\nu_{\cdot}\|_{\cI}})^2}{C_1(\nu_{\cdot})}$ and $C_1(\nu_{\cdot})=\Lip(g){\|\nu_{\cdot}\|_{\cI}}\left(\|\eta_0\|+\|h\|_{\infty}{\|\nu_{\cdot}\|_{\cI}}\right)$.
       \item[(iv)] $\mathcal{S}_{t}^x[\eta_0,\nu_{\cdot},\omega]$ is Lipschitz continuous in $\omega$: Assume $\widetilde{\omega}$ also satisfies $(\mathbf{A4})$ with $\omega$ replaced by $\widetilde{\omega}$, then
       \[|\mathcal{S}_{t}^{x}[\eta_0,\nu_{\cdot},\omega](\phi_0)-\mathcal{S}_{t}^{x}
       [\eta_0,\nu_{\cdot},\widetilde{\omega}](\phi_0)|\le T\e^{L_2t}\|\omega-\widetilde{\omega}\|_{\infty,\cI}.\]
        \item[(v)] $\mathcal{S}_{t}^x[\eta_0,\nu_{\cdot},\omega]$ is continuous in $\eta_0$: Let $(\eta_k)_{k\in\N_0}\subseteq\mathcal{C}(X,{\cM(X)})$\footnote{Here we slightly abuse $\eta_k$, which does not refer to $\eta_t$ at time $t=k\in\N$.} be such that $$\lim_{k\to\infty}d_{\infty,\BL}(\eta_0,\eta_k)=0.$$ {Assume additionally $(\mathbf{A6})'$.} Then
    \[\lim_{k\to\infty}\sup_{t\in\cI}\sup_{x\in X}|\mathcal{S}_{t}^{x}[\eta_0,\nu_{\cdot},\omega](\phi_0)-\mathcal{S}_{t}^{x}
    [\eta_k,\nu_{\cdot},\omega](\phi_0)|=0.\]
    \item[(vi)] $\mathcal{S}_{t}^x[\eta_0,\nu_{\cdot},\omega]$ is Lipschitz continuous in $\nu$: For $\nu_{\cdot},\ \upsilon_{\cdot}\in\cC(\cI,\cB_{\infty})$, $$|\mathcal{S}_{t}^{x}[\eta_0,\nu_{\cdot},\omega](\phi_0)
        -\mathcal{S}_{t}^{x}[\eta_0,\upsilon_{\cdot},\omega](\phi_0)|\le L_3\e^{L_4t}\int_0^t {d}_{\infty,\BL}(\nu_s,\upsilon_s)\rd s,$$
  where $L_3=L_3(\nu_{\cdot},\upsilon_{\cdot})=\BL(g)({\|\upsilon_{\cdot}\|_{\cI}}\|h\|_{\infty}+\|\eta_0\|)
  +\|g\|_{\infty}\BL(h){\|\nu_{\cdot}\|_{\cI}}\varepsilon T$ and $L_4=L_4(\nu_{\cdot},\upsilon_{\cdot})={\|\nu_{\cdot}\|_{\cI}}\Bigl(\Lip(g)(\|\eta_0\|+\|h\|_{\infty}{\|\upsilon_{\cdot}\|_{\cI}})
    +\|g\|_{\infty}\Lip(h){\|\nu_{\cdot}\|_{\cI}}\varepsilon T\Bigr)$.
  \end{enumerate}
\end{proposition}
The proof of Proposition~\ref{prop-semiflow} is provided in Appendix~\ref{appendix-prop-semiflow}. \begin{remark}
From $d_{\T}(x,y)\le|x-y|$ for any $x,y\in\mathbb{T}$ it follows that the results in Proposition~\ref{prop-semiflow} still hold when $|\cdot-\cdot|$ is replaced by the distance $d_{\T}$ on the circle. However, the (stronger) upper estimates will be used in the proof of further {results} in Section~\ref{sect-Vlasov}.
\end{remark}

{The following result on existence and Lipschitz continuity of the semiflow is a direct consequence of Proposition~\ref{prop-semiflow}.}
\begin{proposition}\label{prop-inverse-map}
 Assume $(\mathbf{A1})$-{$(\mathbf{A6})$} and $(\mathbf{A4})'$. Let $T>0$ and 
{$\cI=[0,T]$.} Then for every $t\in\cI$, $\mathcal{S}^{{\cdot}}_{t}[\eta_0,\nu_{\cdot},\omega]$ 
 {is a Lipschitz operator from $\mathcal{C}(X,\mathbb{T})$ to $\mathcal{C}(X,\mathbb{T})$}. 
\end{proposition}

\section{Generalized Vlasov Equation}\label{sect-Vlasov}

{To study convergence of \eqref{coevolution-a}-\eqref{coevolution-b} to some limit in a distributional sense, based on the standard approach of deriving MFL \cite{G13}, it is tempting to construct a fixed point equation based on the equation of characteristics \eqref{characteristic-Eq-1}-\eqref{characteristic-Eq-3}. Nevertheless, \eqref{characteristic-Eq-1}-\eqref{characteristic-Eq-3} is not a (fiber) finite dimensional ODE, so one may expect one has to to study a Vlasov type PDE in an infinite dimensional spatial domain. It seems the usual approximation schemes used in the literature e.g., \cite{KM18,KX21}, all collapse.}

{To get around this barrier caused by infinite dimensionality, our plan is to use \eqref{integralequation} in place of \eqref{characteristic-Eq-1}-\eqref{characteristic-Eq-3}, as the equation of characteristics, for the former is a finite dimensional (integral) equation. Then we instead use the solution map of \eqref{integralequation} to construct the fixed point equation, and study the convergence of \eqref{coevolution-a}-\eqref{coevolution-b}. To do this,}  we investigate the Lipschitz continuity of the solution $\nu$ to the 
\emph{generalized Vlasov equation} (VE) in the sense of Neunzert \cite{N84}
\begin{align}\label{Fixed}
  \nu_t^x=\nu_0^x\circ (\mathcal{S}^x_{t})^{-1}[\eta_0,\nu_{\cdot},\omega],\quad x\in X,
\end{align}
with respect to $\eta_0$ and $\nu_0$, where $(\mathcal{S}^x_{t})^{-1}[\eta_0,\nu_{\cdot},\omega](\phi(t,x))$ is the pre-image\footnote{Note that ${\mathcal{S}^x_{t}[\eta_0,\nu_{\cdot},\omega]}$ may not have an inverse.} of $\phi(t,x)$ under the operator $\mathcal{S}^x_{t}$, for {$t\in\cI=[0,T]$ with $T$ fixed while to be determined below}. We remark that a variant of the generalized VE seemed to be first introduced in \cite{KM18} to investigate MFLs of IPS coupled on heterogeneous static networks.

{To obtain the existence of solutions to the generalized Vlasov equation, it is standard to apply Banach fixed point theorem to a given complete metric space, e.g., $\cB(\cI,\cB_{\infty})$ or $\cC(\cI,\cB_{\infty})$.}

{To prepare for the existence result of the generalized Vlasov equation as outlined above, we will first establish some continuity properties of the map on the right hand side of \eqref{Fixed}. 
 Define $\cF[\eta_0,\omega]$ by}
$$(\cF[\eta_0,\omega]\nu_{\cdot})_t^x=\nu_0^x\circ \left(\mathcal{S}^x_{t}[\eta_0,\nu_{\cdot},\omega]\right)^{-1},\quad t\in\cI,\ x\in X.$$

{The result on  existence as well as approximation of solutions to the generalized Vlasov equation rests on the properties of {$\cF$}, which we will establish below, is as follows.}

{
\begin{proposition}\label{prop-continuousdependence}
Assume $\mathbf{(A1)}$-{$\mathbf{(A5)}$}. Let $T>0$ and $\cI=[0,T]$. Assume $\nu_{\cdot}\in \mathcal{C}(\cI,\mathcal{B}_{\infty})$. Then 
 $\cF[\eta_0,\omega]\nu_t$ is\footnote{Here we simply denote $\cF[\eta_0,\omega]\nu_t$ for $(\cF[\eta_0,\omega]\nu_{\cdot})_t$.}
\begin{enumerate}
\item[\textnormal{(i)}] continuous in $t$:  $t\mapsto\cF[\eta_0,\omega]\nu_{t}\in \mathcal{C}(\cI,\mathcal{B}_{\infty})$. {Moreover, the mass conservation law holds: $$(\cF[\eta_0,\omega]\nu_t)^x(\mathbb{T})=\nu_0^x(\bT),\quad \forall t\in\cI,\quad x\in X.$$In particular, if $\mathbf{(A4)'}$ holds and  $\nu_0\in\mathcal{C}_{\infty}$, 
then $\cF[\eta_0,\omega]\nu_{\cdot}\in \mathcal{C}(\cI,\mathcal{C}_{\infty})$.}
  \item[\textnormal{(ii)}] Lipschitz continuous in $\omega$:  Assume $\widetilde{\omega}$ satisfies $\mathbf{(A4)}$ with $\omega$ replaced by $\widetilde{\omega}$. For all $t\in\cI$, \begin{equation*}
        d_{\infty,\BL}(\mathcal{F}[\eta_0,\omega]\nu_{t},
        \mathcal{F}[\eta_0,\widetilde{\omega}]\nu_{t})\le T\|\nu_{\cdot}\|_{\cI} \e^{L_2(\nu_{\cdot})t}\|\omega-\widetilde{\omega}\|_{\infty,\cI},
\end{equation*}
where $L_2(\nu_{\cdot})$ is defined as in Proposition~\ref{prop-semiflow}(iii). 
  \item[\textnormal{(iii)}]  continuous in $\eta_0$: Assume $\mathbf{(A6)'}$. For all $t\in\cI$, and $\eta_k\in\mathcal{B}(X,\cM(X))$ for $k\in\N_0$ such that $\lim_{k\to\infty}{d_{\infty,\BL}}(\eta_0,\eta_k)=0$,
  \begin{equation*}
  \begin{split}
  &\lim_{k\to\infty}d_{\infty,\BL}(\cF[\eta_0,\omega]\nu_t,
  \cF[\eta_k,\omega]\nu_t)=0.
  \end{split}
\end{equation*}
 \item[\textnormal{(iv)}] Lipschitz continuous in $\nu_{\cdot}$: For all $t\in\cI$, and $\nu_{\cdot},\upsilon_{\cdot}\in \mathcal{C}(\cI,\cB_{\infty})$,
  \begin{multline*}
  d_{\infty,\BL}(\cF[\eta_0,\omega]\nu_t,\cF[\eta_0,\omega]\upsilon_t)\le \e^{{L_2(\upsilon_{\cdot})}t}d_{\infty,\BL}(\nu_0,\upsilon_0)\\
+ {L_3(\nu_{\cdot},\upsilon_{\cdot}){\|\nu_{\cdot}\|_{\cI}}\e^{L_4(\nu_{\cdot},\upsilon_{\cdot}) t}}\int_0^t d_{\infty,\BL}(\nu_s,\upsilon_s) \rd s,  
  \end{multline*}
{where $L_2(\upsilon_{\cdot})$, $L_3(\nu_{\cdot},\upsilon_{\cdot})$ and $L_4(\nu_{\cdot},\upsilon_{\cdot})$ are as defined in Proposition~\ref{prop-semiflow}.} 
\end{enumerate}
\end{proposition}
}
The proof of Proposition~\ref{prop-continuousdependence} is provided in Appendix~\ref{appendix-prop-continuousdependence}.

Next, we will provide well-posedness of the generalized Vlasov equation.

{\begin{theorem}
  \label{thm-existence-VE}
  Assume $\mathbf{(A1)}$-$\mathbf{(A5)}$. Let $T>0$ and $\cI=[0,T]$. Assume $\nu_0\in \mathcal{B}_{\infty}$. Then there exists a unique solution   
to 
\eqref{Fixed}. In particular, if $\nu_0\in\cC_{\infty}$, then $\nu_{\cdot}\in\mathcal{C}(\cI,\mathcal{C}_{\infty})$.
\end{theorem}}

The proof of Theorem~\ref{thm-existence-VE} is provided in Section~\ref{subsect-proof-th-existence}.

\begin{remark}
 Although we successfully decouple the dynamics of the edges from the dynamics of the nodes and thus achieve an analogue of equation of characteristics in the classical sense, the classical technique via equation of characteristics by interchanging derivative in time with derivative in state fails, since the vector field not only depends on the current state, but also on the entire trajectory (as well as the mean field limit) from initial time to present. This makes it impossible to obtain a characterization of the MFL{, when it is absolutely continuous,} via a Vlasov PDE on the \emph{finite dimensional} phase space $\mathbb{T}$. We believe such MFL may share similar properties as that for a delay IPS \cite{BPR22}.
\end{remark}

The next proposition provides continuous dependence of the solutions to the generalized VE, which is useful to obtain the approximation result later.

\begin{proposition}\label{prop-sol-fixedpoint}
{Assume $\mathbf{(A1)}$-{$\mathbf{(A5)}$}. 
Let $T>0$, $\cI=[0,T]$. 
Then solutions to  
\eqref{Fixed} have continuous dependence on
\begin{enumerate}
\item[\textnormal{(i)}] the initial conditions:
\[d_{\infty,\BL}(\nu_t^1,\nu_t^2)\le \textnormal{e}^{L_5t}d_{\infty,\BL}(\nu_0^1,\nu_0^2),\quad t\in\cI,\]
where $\nu^i_{\cdot}$ is the solution to \eqref{Fixed} with initial condition $\nu^i_0\in{\cB_{\infty}}$ for $i=1,2$, and $L_5(\nu_{\cdot}^1,\nu_{\cdot}^2) = \Bigl(L_3(\nu^{1}_{\cdot},\nu^{2}_{\cdot})\|\nu^{1}_{\cdot}\|+\max\{L_2(\nu^{1}_{\cdot}),L_4(\nu^{1}_{\cdot},\nu^{2}_{\cdot})\}\Bigr)$
\item[\textnormal{(ii)}] $\omega$: Let $\nu^i_{\cdot}$ be the solution to \eqref{Fixed} with functions $\omega_i$ for $i=1,2$ and the same initial condition $\nu_0^1=\nu_0^2{\in\cB_{\infty}}$. Then
\[d_{\infty,\BL}(\nu^1_t,\nu^2_t)\le
{T\|\nu^1_{\cdot}\|_{\cI}}\textnormal{e}^{L_5(\nu^1_{\cdot},\nu^2_{\cdot})t}\|\omega_1-\omega_2\|_{\infty,\cI},\]
{where $L_5(\nu^1_{\cdot},\nu^2_{\cdot})$ is given in (i).} 
\item[\textnormal{(iii)}] $\eta_0$: Let {$\{\eta_k\}_{k\in\N}\subseteq \mathcal{B}(X,\cM(X))$.} If $\lim_{k\to\infty}d_{\infty,\BL}(\eta_0,\eta_k)=0$, then
    $$\lim_{k\to\infty}d^{\cI}_{\infty,\BL}(\nu_{\cdot},\nu^k_{\cdot})=0,$$
where $\nu^k_{\cdot}$ is the solution to \eqref{Fixed} with initial {SDGM} $\eta_k$ for $k\in\N$ and the same initial condition {$\nu_0\in\cC_{\infty}$}.
\end{enumerate}}
\end{proposition}
The proof of Proposition~\ref{prop-sol-fixedpoint} is provided in Appendix~\ref{appendix-prop-sol-fixedpoint}.
{\begin{remark}
One can simply observe by symmetry that one can replace $L_5$ in Proposition~\ref{prop-sol-fixedpoint}(i) by 
$$L_5(\nu_{\cdot}^1,\nu_{\cdot}^2) = \min_{i=1,2} \Bigl(L_3(\nu^{i}_{\cdot},\nu^{3-i}_{\cdot})\|\nu^{i}_{\cdot}\|+\max\{L_2(\nu^{i}_{\cdot}),L_4(\nu^{i}_{\cdot},\nu^{3-i}_{\cdot})\}\Bigr)$$
Nevertheless, we here do not aim to make an effort to optimize the constant.
\end{remark}}
\section{Approximation of the mean field limit}\label{sect-th-approx}

{Based on Proposition~\ref{prop-sol-fixedpoint}, one can simply obtain the convergence of empirical distributions to the MFL using triangle inequalities, \emph{provided the respective SDGMs, frequency functions, as well as initial empirical distributions converge.}}

{In this section, we consider an inverse problem: Given the solution to the generalized Vlasov equation, how to construct a sequence of {co-evolutionary} ODEs in forms of \eqref{coevolution-a}-\eqref{coevolution-b}, so that the solution is the MFL of the the constructed ODEs? In this way, we not only address the existence of the approximations of particularly, the SDGM and the initial distributions of the generalized Vlasov equation, but do so by construction. Hence such construction can be viewed as a numerical scheme of the generalized Vlasov equation.}

{To make a long story short, the Vlasov equation can be approximated by a sequence of ODEs \eqref{lattice} indexed by $m,n$ (Theorem~\ref{th-approx}), which are well-posed (Proposition~\ref{prop-well-posed-lattice}).  The following lemmas provide the involved functions that appear in the constructed ODEs, as approximations of the frequency function $\omega$, the initial distribution $\nu_0$, as well as the initial SDGM $\eta_0$.}

{Before proceeding to the technical lemmas, we first provide an idea on how to construct these functions ($\omega$, $\nu_0$, $\eta_0$) of $x$.} 

{We first mesh the underlying space $X$ using a partition $\{A_i^m\}_{1\le i\le m}$ into grids of size $m$. Then by virtue of the continuity of the above three functions, within each small set $A_i^m$ of the partition, we will use a constant function on $A_i^m$ to approximate each of them confined to the subdomain $A_i^m$, respectively;  for $\omega$ this is rather standard, while for the other two measure valued functions, such ``constant'' may be of a measure value  (with or without an absolutely continuous part w.r.t. $\lambda$) of a support containing potentially infinitely many (or finite while unbounded number in $m$ as $m$ tends to infinity). To generate a measure in terms of a finite dimensional ODE (in most cases, e.g., \cite{KX21}) or equivalently while more conveniently in form of an integral equation (see \eqref{lattice} below) in our case, it is arguably intuitive to consider a finitely supported measure (in \cite{KX21} as well as in this paper) or a piecewise uniform measure (in \cite{KM18} as well as most relevant papers on graphon systems since graphon can be viewed as the kernel of a measure-valued function) to do this task. In the former case as what we plan to do here, we apply results on approximation of probability measures by atomic measures (e.g., \cite{C18,XB19}) to the positive and negative part of a signed measure (for $\eta_0$) with normalization (to make a positive finite measure a probability measure). To make these discretizations of measures feasible, we reasonably need more points (of size $n$, which generically is far larger than $m$) in the support of the approximations for each fixed $m$. With these discretizations of functions, we substitute into the general model \eqref{integralequation} to obtain the integral equation discretization (\eqref{lattice}) of the equation of characteristics in the light of continuous dependence of the solution map $\mathcal{S}^x_t$ (Proposition~\ref{prop-semiflow}). One needs to pay particular attention that replacing each $\phi(t,x)$ one needs $n$ ``copies'' ($\phi_{(i-1)n+j}(t)$ for $1\le j\le n$) instead of just one for each $x\in A^m_i$, which is consistent with that we need $n$ atoms in the approximation of $\eta^x_t$ as well as that there are $mn$ nodes in the underlying graph. The approximation in terms of the empirical distributions of the integral equation of the solution to the generalized Vlasov equation comes from the Dobrushin estimate we established earlier (Proposition~\ref{prop-sol-fixedpoint}). The discretization of the evolving SDGM $\eta_t$ is constructed based on \eqref{Eq-variation-2} by further discretizing the Lebesgue measure on $X$ by atomic measures supported on the mesh points $x^m_i\in A^m_i$ given in Lemma~\ref{le-ini-2} below.}

\begin{lemma}\label{le-ini-2}
Assume $(\mathbf{A1})$ and $\nu_0\in\cC_{\infty}$. {Then there exists a partition $\{A^m_i\}_{1\le i\le m}$ of $X$  for $m\in \N$} satisfying \[\lim_{m\to\infty}\max_{1\le i\le m}\Diam A^m_i=0.\] Let $x_i^m\in A_i^m$, for $i=1,\ldots,m$, $m\in\N$. 
{Moreover,} there exists a sequence $\{\varphi^{m,n}_{(i-1)n+j}\colon i=1,\ldots,m,j=1,\ldots,n\}_{n,m\in\N}\subseteq \mathbb{T}$ such that $$\lim_{m\to\infty}\lim_{n\to\infty}{d_{\infty,\BL}}({\nu^{m,n}_0},\nu_0)=0,$$
where ${\nu^{m,n}_0}\in \mathcal{B}_{\infty}$ with \begin{equation}
  \label{nu-representation}
  {(\nu^{m,n}_0)^x}\coloneqq\sum_{i=1}^m\mathbbm{1}_{A^m_i}(x)\frac{a_{m,i}}{n}\sum_{j=1}^n
\delta_{\varphi^{m,n}_{(i-1)n+j}},\quad x\in X,
\end{equation}
$$a_{m,i}=\begin{cases}
  \frac{\int_{A_i^m}\nu_0^x(\mathbb{T})\rd x}{\lambda(A_i^m)},& \textnormal{if}\quad \lambda(A_i^m)>0,\\
  \nu_0^{x^m_i}(\mathbb{T}),& \textnormal{if}\quad \lambda(A_i^m)=0.
\end{cases}$$
\end{lemma}
\begin{proof}
{Note that the existence of such a partition of $X$ is ensured by \cite[Lemma~4.4]{KX21}.} Let $Y=[0,1]$. Note that $${d_{\mathbb{T}}(x,y)=\min\{|x-y|,1-|x-y|\}\le |x-y|,\quad x,y\in\bT\equiv[0,1[}$$ {and  $\mathcal{BL}_1(\T)\subsetneq\mathcal{BL}_1(Y)$ as the former only consists of the proper subset of $1$-periodic functions (so that the function value coincide at $x=0$ and $x=1$) defiend on $Y$. It follows from the supremum representation of the bounded Lipschitz metric \cite{CM95} (see also e.g., \cite{RDG11,X18,X19}) that}
\begin{equation}\label{inequality-compare}
d_{\BL}(\mu,\nu)\le \widetilde{d}_{\BL}(\mu,\nu),\quad \mu,\nu\in\cM(\mathbb{T}),
\end{equation}where $\widetilde{d}_{\BL}$ stands for the bounded Lipschitz metric on $\cM(Y)$, and any $\mu\in\cM(\bT)$ can be regarded as a measure in $\cP(Y)$ supported in a subset of $[0,1[$. Since $\nu_0\in\cC_{\infty}$, applying \cite[{Lemma~4.5}]{KX21}, there exists ${\widetilde{\nu}^{m,n}_0}\in\cB(X,\cM_+(Y))$ such that
$$\lim_{m\to\infty}\lim_{n\to\infty}\sup_{x\in X}\widetilde{d}_{\BL}({(\widetilde{\nu}^{m,n}_0)^x},\nu_0^x)=0,$$
where $${(\widetilde{\nu}^{m,n}_0)^x}\coloneqq\sum_{i=1}^m\mathbbm{1}_{A^m_i}(x)\frac{a_{m,i}}{n}\sum_{j=1}^n
\delta_{\widetilde{\varphi}^{m,n}_{(i-1)n+j}},\quad x\in X,$$
$$a_{m,i}=\begin{cases}
  \frac{\int_{A_i^m}\nu_0^x(\mathbb{T})\rd x}{\lambda(A_i^m)},& \textnormal{if}\quad \lambda(A_i^m)>0,\\
  \nu_0^{x^m_i}(\mathbb{T}),& \textnormal{if}\quad \lambda(A_i^m)=0,
\end{cases}$$
and $\{\widetilde{\varphi}^{m,n}_{(i-1)n+j}\}_{1\le i\le m,1\le j\le n}\subseteq Y$. For every $x\in X$, let ${(\nu^{m,n}_0)^x}$ be the discrete measure by transporting the mass of the discrete measure ${(\widetilde{\nu}^{m,n}_0)^x}$ at $1$ to that at $0$:
\[{(\nu^{m,n}_0)^x}(z)=\begin{cases}
  {(\widetilde{\nu}^{m,n}_0)^x}(z),& \text{if}\quad z\neq0,\\
  {(\widetilde{\nu}^{m,n}_0)^x}(0)+{(\widetilde{\nu}^{m,n}_0)^x}(1),& \text{if}\quad z=0.
\end{cases}\]
Then ${\nu^{m,n}_0}$ can be represented by \eqref{nu-representation}, and it follows from \eqref{inequality-compare} that
\[d_{\infty,\BL}({\nu^{m,n}_0},\nu_0)=\sup_{x\in X}d_{\BL}({(\nu^{m,n}_0)^x},\nu_0^x)\le\sup_{x\in X}\widetilde{d}_{\BL}({(\widetilde{\nu}^{m,n}_0)^x},\widetilde{\nu}_0^x),\] which immediately yields the conclusion.
 \end{proof}
{
\begin{lemma}\label{le-graph}
Assume $(\mathbf{A1})$ and $(\mathbf{A4})'$. For every $m\in\N$, let $A^m_i$ and $x_i^m$ be defined in Lemma~\ref{le-ini-2} for $i=1,\ldots,m$, $m\in\N$. Then  there exist two sequences $\{y^{k,m,n}_{(i-1)n+j}\colon i=1,\ldots,m,j=1,\ldots,n\}_{m,n\in\N}\subseteq X$ for $k=1,2$ such that $$\lim_{m\to\infty}\lim_{n\to\infty}d_{\infty,\BL}(\eta_0^{m,n},\eta_0)=0,$$
where $\eta_0^{m,n}\in \mathcal{B}(X,\cM(X))$ are given by \begin{equation}\label{eq:discretization-of-eta_0}(\eta_0^{m,n})^x\coloneqq\sum_{i=1}^m\mathbbm{1}_{A^m_i}(x)
\sum_{j=1}^n\Bigl(\frac{b_{1,m,i}}{n}\delta_{y^{1,m,n}_{(i-1)n+j}}-\frac{b_{2,m,i}}{n}\delta_{y^{2,m,n}_{(i-1)n+j}}\Bigr);
\end{equation} for $k=1,2$, \begin{equation}\label{eq:b}
b_{k,m,i}=\begin{cases}
  \frac{\int_{A_i^m}\eta_{0,k}^x(X)\rd x}{\lambda(A_i^m)},& \textnormal{if}\quad \lambda(A_i^m)>0,\\
  \eta_{0,k}^{x_i^m}(X),& \textnormal{if}\quad \lambda(A_i^m)=0,
\end{cases}
\end{equation}
and  $\eta_{0,1}$ and $\eta_{0,2}$ are the positive and negative part of $\eta_0$, respectively.
\end{lemma}
\begin{proof}
Let $\eta_0=\eta_{0,1}-\eta_{0,2}$ be the Hahn decomposition of $\eta_0$, where $\eta_{0,1}$ and $\eta_{0,2}$ are the positive and negative part of $\eta_0$, respectively.  Applying \cite[Lemma~4.6]{KX21} (with $r=1$ therein) to $\eta_{0,1}$ and $\eta_{0,2}$, respectively, we obtain two sequences of points $\{y^{k,m,n}_{(i-1)n+j}\colon i=1,\ldots,m,j=1,\ldots,n\}_{m,n\in\N}\subseteq X$ for $k=1,2$ such that for $k=1,2$, $$\lim_{m\to\infty}\lim_{n\to\infty}d_{\infty,\BL}(\eta_{0,k}^{m,n},\eta_{0,k})=0,$$
where $\eta_{0,k}^{m,n}\in \mathcal{B}(X,\cM_+(X))$ with 
\[(\eta_{0,k}^{m,n})^x\coloneqq\sum_{i=1}^m\mathbbm{1}_{A^m_i}(x)\frac{b_{k,m,i}}{n}
\sum_{j=1}^n\delta_{y^{k,m,n}_{(i-1)n+j}},\quad x\in X,
\] and $b_{k,m,i}$ are given in \eqref{eq:b}. By triangle inequality, we obtain 
$$\lim_{m\to\infty}\lim_{n\to\infty}d_{\infty,\BL}(\eta_{0}^{m,n},\eta_{0})=0,$$
where $\eta_{0}^{m,n}=\eta_{0,1}^{m,n}-\eta_{0,2}^{m,n}$ is given in \eqref{eq:discretization-of-eta_0}.
\end{proof}
}

\begin{remark}
\begin{enumerate}
\item[(i)] {We remark that $\lambda(A^m_i)=0$ is possible, even in the light of $\lambda(X)>0$. Indeed, this is because it is possible that $X=X_1\cup X_2$, where the two subsets $X_1$ and $X_2$ have a positive Hausdorff distance and one of them, say $X_2$, is of Lebesgue measure zero, e.g., $X=[0,1]\cup\{2\}$. In this case, to ensure the diameter of $A_i^m$ tends to zero as $m\to\infty$, $A^m_i$ may only contain points in one of the two sets $X_1$ and $X_2$, and hence $\lambda(A^m_i)=0$ for those $A_i^m$ that partition $X_2$.}
\item[(ii)] {One generally cannot obtain the discrete approximation directly applied to a sign measure in $\cM(X)$ as it with the metric $d_{\BL}$ is \emph{not} complete.}  
\end{enumerate}
\end{remark}
{To associate the points in $\{y^{k,m,n}_{n(i-1)+j}\}_{1\le i\le m; 1\le j\le n}$ with the sets $A^m_i$ of the partition, let $q^{k,m,n}_{i,j}$ be the indices such that $y^{k,m,n}_{n(i-1)+j}\in A^m_{q^{k,m,n}_{i,j}}$, for $k=1,2$,  $i=1,\ldots,m$, $j=1,\ldots,n$.}

{For more examples of discretizations of $\nu_0$ and $\eta_0$ (where the SDGM is a DGM, while one can  construct discretizations of an SDGM by applying the discretizations of DGMs to the positive and negative parts of the SDGM), the reader is referred to \cite[{Section~4}]{KX21}.} Next we provide discretizations of the frequency function $\omega$.

\begin{lemma}\label{le-omega}
Assume $(\mathbf{A1})$, $(\mathbf{A4})$ and $(\mathbf{A4})'$.  Let {$T>0$ and $\cI=[0,T]$}. For every $m\in\N$, let $A^m_i$ and $x_i^m$ be defined in Lemma~\ref{le-ini-2} for $i=1,\ldots,m$, $m\in\N$. For every $m\in\N$, let $$\omega^m(t,z)=\sum_{i=1}^m\mathbbm{1}_{A^m_i}(z)\omega(t,x_i^m),\quad t\in\cI,\ z\in X.$$ Then
  $$\lim_{m\to\infty}\int_0^T\sup_{x\in X}\lt|\omega^m(t,x)-\omega(t,x)\rt|\rd t=0.$$
\end{lemma}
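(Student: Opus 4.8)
The plan is to bound the integrand pointwise in $t$ by the modulus of continuity of $\omega(t,\cdot)$ at the mesh scale $\delta_m:=\max_{1\le i\le m}\Diam A^m_i$, to observe that this modulus tends to $0$ for each fixed $t$ because $\omega(t,\cdot)$ is uniformly continuous on the compact space $X$, and then to conclude by the dominated convergence theorem on $[0,T]$ with the constant dominating function $2\|\omega\|_{\infty,\cI}$.

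First I would set $g_m(t):=\sup_{x\in X}|\omega^m(t,x)-\omega(t,x)|$. Since $\{A^m_i\}_{1\le i\le m}$ partitions $X$ and $\omega^m(t,x)=\omega(t,x^m_i)$ for $x\in A^m_i$, while $x,x^m_i\in A^m_i$ forces $d^X(x,x^m_i)\le\Diam A^m_i\le\delta_m$, one obtains
\[g_m(t)=\max_{1\le i\le m}\sup_{x\in A^m_i}|\omega(t,x)-\omega(t,x^m_i)|\le\varpi_t(\delta_m),\]
where $\varpi_t(\delta):=\sup\{|\omega(t,x)-\omega(t,y)|\colon x,y\in X,\ d^X(x,y)\le\delta\}$ denotes the modulus of continuity of $\omega(t,\cdot)$. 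Fix $t\in[0,T]$. By $\mathbf{(A1)}$ the space $X$ is compact, and by $\mathbf{(A4)}'$ the map $\omega(t,\cdot)\colon X\to\R$ is continuous, hence uniformly continuous; therefore $\varpi_t(\delta)\to0$ as $\delta\to0^+$. Since $\delta_m\to0$ by the defining property of the partitions (Lemma~\ref{le-partition}), it follows that $g_m(t)\to0$ for every $t\in[0,T]$.

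It then remains to pass the limit under the integral. For the dominating function, $\mathbf{(A4)}$ gives $\|\omega\|_{\infty,\cI}<\infty$, whence $g_m(t)\le2\|\omega\|_{\infty,\cI}$ for all $t$, a constant that is integrable on the bounded interval $[0,T]$. For measurability of $t\mapsto g_m(t)$ I would argue as follows: $X$ is separable, and for fixed $t$ the function $x\mapsto|\omega(t,x)-\omega(t,x^m_i)|$ is continuous by $\mathbf{(A4)}'$, so the supremum over $A^m_i$ equals the supremum over a fixed countable dense set $D_i\subseteq A^m_i$; for each fixed $x$ the map $t\mapsto|\omega(t,x)-\omega(t,x^m_i)|$ is continuous by $\mathbf{(A4)}$. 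Hence $t\mapsto\sup_{x\in D_i}|\omega(t,x)-\omega(t,x^m_i)|$ is a countable supremum of continuous functions, thus lower semicontinuous and Borel measurable, and taking the maximum over the finitely many indices $i$ preserves measurability. The dominated convergence theorem now yields $\int_0^T g_m(t)\,\rd t\to0$, which is the assertion.

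The only genuinely delicate point is this measurability of $g_m$ in $t$: since $\omega$ is assumed merely \emph{separately} continuous --- in $t$ via $\mathbf{(A4)}$ and in $x$ via $\mathbf{(A4)}'$ --- rather than jointly continuous, one cannot directly claim continuity of $g_m$, and the countable-dense reduction above is what circumvents the absence of joint regularity. Everything else is a routine uniform-continuity-plus-dominated-convergence estimate.
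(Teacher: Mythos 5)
Your argument is correct and is exactly the standard route the paper relies on (the paper omits the proof, citing the analogous \cite[Lemma~5.9]{KX21}): the pointwise bound by the modulus of continuity of $\omega(t,\cdot)$ at the mesh scale $\delta_m\to0$, followed by dominated convergence with the constant bound $2\|\omega\|_{\infty,\cI}$ from $\mathbf{(A4)}$. Your extra care about measurability of $t\mapsto\sup_{x}|\omega^m(t,x)-\omega(t,x)|$ under merely separate continuity, via reduction to a countable dense subset of each $A^m_i$, is a legitimate and welcome detail that the paper glosses over.
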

The proof of Lemma~\ref{le-omega} is analogous to \cite[{Lemma~4.9}]{KX21} and hence omitted.

Now we are ready to provide a discretization of the integral equation of characteristics \eqref{integralequation} on an initial {SDGM} $\eta_0$ by a sequence of ODEs characterizing the dynamics of the oscillators coupled on the underlying coevolving graphs. {To make it convincing that we so far obtain all necessary discretizations to construct the desired ODEs, 
{ we summarize the information below: There exist}
\begin{enumerate}
 \item[$\bullet$] a partition $\{A^m_i\}_{1\le i\le m}$ of $X$ and points $x^m_i\in A^m_i$ for $i=1,\ldots,m$, for every $m\in\N$;
  \item[$\bullet$] three sequence of non-negative  numbers $\{a_{m,i}\}_{1\le i\le m},\ \{b_{k,m,i}\}_{1\le i\le m} \subseteq\R_+$ for $k=1,2$, $m\in\N$;
   \item[$\bullet$] a sequence of double indexed points on the circle $\{\varphi^{m,n}_{(i-1)n+j}\}_{1\le i\le m;1\le j\le n}\subseteq \T$ for $n,m\in\N$;
  \item[$\bullet$] {two sequences of double indexed points on $X$ $\{y^{k,m,n}_{(i-1)n+j}\}_{1\le i\le m;1\le j\le n}\subseteq X$, for $k=1,2$ and  $n,m\in\N$;} 
  \item[$\bullet$] {a sequence of frequency functions  $\{\omega_i\}_{1\le i\le m}\subseteq\cC(\cI)$ for $m\in\N$;}
    \item[$\bullet$] a sequence of {SDGMs $\eta_0^{m,n}\in\mathcal{B}(X,\cM(X))$ for $n,m\in\N$, and}  
        \item[$\bullet$] a sequence of {finite discrete  measures 
        $\nu_0^{m,n}\in\cM_+(X)$ for $n,m\in\N$}
 \end{enumerate}
 such that
  $$\lim_{m\to\infty}\lim_{n\to\infty}d_{\infty,\BL}({\nu^{m,n}_0},\nu_0)=0,$$
 $$\lim_{m\to\infty}\lim_{n\to\infty}d_{\infty,\BL}(\eta_0^{m,n},\eta_0)=0,$$
     $$\lim_{m\to\infty}\int_0^T\max_{1\le i\le m}\sup_{x\in A_i^m}\lt|\omega^m_i(t)-\omega(t,x)\rt|\rd t=0,$$
 where
 \begin{alignat}{2}
\label{discretization-3}\omega^m_i(t)=&\  \omega(t,x_i^m),\quad t\in\bT,\ i=1,\ldots,m,\\
\label{discretization-2}{(\eta^{m,n}_0)^x}= &
 \sum_{i=1}^m\mathbbm{1}_{A^m_i}(x)\sum_{j=1}^n
\bigl(\tfrac{b_{1,m,i}}{n}\delta_{y^{1,m,n}_{(i-1)n+j}}-\tfrac{b_{2,m,i}}{n}\delta_{y^{2,m,n}_{(i-1)n+j}}\bigr),\quad x\in X\\
\label{discretization-1}{(\nu^{m,n}_0)^x}= &
 \sum_{i=1}^m\mathbbm{1}_{A^m_i}(x)\sum_{j=1}^n
\frac{a_{m,i}}{n}\delta_{\varphi^{m,n}_{(i-1)n+j}},\quad x\in X.
 \end{alignat}
 }
  
{
Consider the following coupled system of integral equations
\begin{equation}\label{lattice}
\begin{split}
\phi_{(i-1)n+j}(t) = & \varphi_{(i-1)n+j}^{m,n}+\int_0^t w^m_i(s)ds + \int_0^t\e^{-\varepsilon s}\\
&\cdot\Bigl(\frac{b_{1,m,i}}{n}\sum_{\ell=1}^n
  \frac{a_{m,q^{1,m,n}_{i,\ell}}}{n}\sum_{\ell'=1}^ng(\phi_{(q^{1,m,n}_{i,\ell}-1)n+\ell'}(s)-\phi_{(i-1)n+j}(s))\\
  &-\frac{b_{2,m,i}}{n}\sum_{\ell=1}^n
  \frac{a_{m,q^{2,m,n}_{i,\ell}}}{n}\sum_{\ell'=1}^ng(\phi_{(q^{2,m,n}_{i,\ell}-1)n+\ell'}(s)-\phi_{(i-1)n+j}(s))\Bigr)\rd s\\
  &-\Bigl(\varepsilon\int_{0}^t\int_0^s\e^{-\varepsilon(t-\tau)}\sum_{p=1}^m\lambda(A^m_p)\frac{a_{m,p}^2}{n^2}\sum_{\ell=1}^n\sum_{\ell'=1}^ng(\phi^{m,n}_{(p-1)n+\ell}(s)-\phi^{m,n}_{(i-1)n+j}(s))\\
&  \cdot h(\phi^{m,n}_{(p-1)n+\ell'}(\tau)-\phi^{m,n}_{(i-1)n+j}(\tau))\rd\tau \rd s\Bigr) \mod1,\quad  t\in\R,
\end{split}
\end{equation}
}

{The above integral equation is well-posed, as a  consequence of the standard Picard-Lindel\"{o}f iteration.}

\begin{proposition}\label{prop-well-posed-lattice}
Assume $(\mathbf{A1})$-$(\mathbf{A4})$ and $(\mathbf{A4})'$. Let $T>0$.  Then there exists a unique solution {$(\Phi^{m,n}(t)=(\phi^{m,n}_{(i-1)n+j}(t))_{1\le i\le m,\ 1\le j\le n}$ to \eqref{lattice}.}
\end{proposition}

For $t\in\cI$, let $\Phi^{m,n}(t)=(\phi^{m,n}_{(i-1)n+j}(t))_{1\le i\le m;1\le j\le n}$ be the solution to \eqref{lattice}, {and define a sequence of fiber empirical distributions} $({(\nu^{m,n}_{\cdot})})_{m,n\in\N}\subseteq\cC(\cI,\cB_{\infty})$: \begin{equation}\label{Eq-approx}
{(\nu^{m,n}_t)^x}\coloneqq\sum_{i=1}^m\mathbbm{1}_{A^m_i}(x)\frac{a_{m,i}}{n}
\sum_{j=1}^{n}\delta_{\phi^{m,n}_{(i-1)n+j}(t)},\quad x\in X,\quad t\in\cI.
\end{equation}

\begin{theorem}\label{th-approx}
{Assume ($\mathbf{A1}$)-($\mathbf{A5}$) and ($\mathbf{A4}$)'. Let $T>0$ and $\cI=[0,T]$. Assume $\nu_0\in\mathcal{C}_{\infty}$. Let
 ${\eta^{m,n}_0}\in \mathcal{B}(X,\cM(X))$, ${\nu^{m,n}_0}\in\mathcal{B}_{\infty}$, and $\omega^m_i\in \mathcal{C}(\cI)$} and ${(\nu^{m,n}_{\cdot})}$ be defined in \eqref{discretization-1}-\eqref{discretization-3} and \eqref{Eq-approx}, respectively. Let $\nu_{\cdot}$ the solution to the generalized VE \eqref{Fixed} with initial condition $\nu_0$.   Then $$\lim_{n\to\infty}{d^{\cI}_{\infty,\BL}(\nu^{m,n}_{\cdot},\nu_{\cdot})}=0$$ 
\end{theorem}
The proof of Theorem~\ref{th-approx} is provided in Section~\ref{subsect-proof-th-approx}.

\section{An example\---A model on binary tree networks}\label{sect-example}
{In this section, to demonstrate the applicability of our main results obtained in Sections~\ref{sect-Vlasov} and \ref{sect-th-approx}, we provide {one} example where the sequence of initial graphs is not dense. It is noteworthy that despite it is assumed in this paper that $X$ is a compact set of a positive Lebesgue measure, almost the same arguments yield some other interesting cases where $X$ is a circle (and hence as a subset of $\R^2$, it is a Lebesgue measure zero set) and the reference probability measure on $X$ is chosen to be the Haar measure on the circle. We refer the interested reader to other examples including the Kuramoto model on a ring network in an earlier version of this paper \cite[Section~6.1]{GKX22}.}

 \tikzset{
    net node/.style = {circle, inner sep=0pt, outer sep=0pt, ball color=white},
    net root node/.style = {net node},
    net connect/.style = {draw=black},
  treenode/.style = {align=center, inner sep=0pt, text centered,
    font=\sffamily},
  arn_n/.style = {treenode, circle, font=\sffamily, draw=black,
    fill=white, text width=1.5em,thick},
   arn_r/.style = {treenode, circle, draw=black,
   fill=red, text width=1em},
  arn_x/.style = {treenode, circle, draw=black,
   fill=white, opacity=1, text width=1.5em}
}
  \begin{figure}[h]
    \centering
\scalebox{1}{\begin{tikzpicture}[-,thick,level/.style={sibling distance = 6cm/#1,
  level distance = 1.5cm},scale=.8]
\node [arn_x] {1}
    child{ node [arn_x] {2}
            {child{ node [arn_x] {4}
            	{
                child{ node [arn_x] {8}}
				child{ node [arn_x] {9}}
            }}
            child{ node [arn_x] {5}
				{
				child{ node [arn_x] {10}}
                child{ node [arn_x] {11}}
            }}
         }
    }
    child{ node [arn_x] {3}
            {child{ node [arn_x] {6}
            	{
				child{ node [arn_x] {12}}
                child{ node [arn_x] {13}}
            }}
            child{ node [arn_x] {7}
				{
				child{ node [arn_x] {14}}
                child{ node [arn_x] {15}}
            }}
         }
    };
\end{tikzpicture}}
      \caption{Oscillators coupled on binary trees.}
\label{fig-1}
  \end{figure}

{Consider the binary Kuramoto network \begin{equation}
  \label{Kuramoto-model-1}
\begin{split}
\dot{\phi}^N_i=&\ \omega_i^N(t)+\frac{1}{N}\sum_{j=1}^NW^N_{ij}g(\phi^N_j-\phi^N_i),\quad 0<t\le T^*,\\
\dot{W}^N_{ij}=&-\varepsilon(W^N_{ij}+h(\phi_j^N-\phi^N_i)),\quad 0<t\le T^*\\
\phi^N_i(0)=&\ \varphi^N_{i},\quad W^N_{ij}(0)=W^N_{i,j,0},\quad i,j=1,\ldots,N,
\end{split}
\end{equation}
where $T^*>0$, $\omega_i^N(t)$ is the natural time-dependent frequency of the $i$-th oscillator, $h(u)=-\sin^22\pi u$ and $g(u)=\sin2\pi u$,} and the network is
 a sequence of binary trees of $N$ nodes (see Figure~\ref{fig-1}(b)) with
\[W^N_{i,j,0}=N\mathbbm{1}_{\{2i,2i+1,\lfloor i/2\rfloor\}}(j),\quad i,j=1,\ldots,N,\]
for all $N=2^{m+1}-1$ where $m$ is the number of levels of the binary tree.
Let $X=[0,1]$, $I_i^N=\bigl[\frac{i-1}{N},\frac{i}{N}\bigr[$, $i=1,\ldots,N-1$ and {$I^N_N=\bigl[\frac{N-1}{N},1\bigr]$} be a uniform partition of $X$. For every $x\in I^N_i$, $t\in[0,T^*]$, let
\begin{equation}
  \label{discretize-1}
  \phi^{N}(t,x)=\phi^N_i(t),\quad \varphi^N(x)=\varphi^N_i,\quad \omega^N(t,x)=\omega_i^N(t),
\end{equation}\begin{equation}
  \label{discretize-2}
\eta^{x}_{N,0}=\frac{1}{N}\sum_{j=1}^NW_{i,j,0}^N\delta_{\frac{2j-1}{2N}},\quad \nu_{N,t}^{x}=\delta_{\phi_i^N(t)}.\end{equation}
{Define the \emph{empirical distribution} of the network \eqref{Kuramoto-model-1}
\begin{equation}\label{eq:empirical-distribution}
\int_0^1\nu^{x}_{N,t}\rd x\coloneqq\frac{1}{N}\sum_{i=1}^N\delta_{\phi^N_i(t)}
\end{equation}}
 Let
\[\eta^x_0=\begin{cases}
2\delta_0,&\text{if}\ x=0,\\
2\delta_{2x}+\delta_{x/2},& \text{if}\ 0<x\le1/2,\\
\delta_{x/2},& \text{if}\ 1/2<x\le1.
  \end{cases}\]
Note that $x\mapsto\eta^x_0$ is continuous at all $x\in ]0,1/2[\cup]1/2,1]$. 

By Theorem~\ref{thm-characteristic-1} and  Corollary~\ref{cor-flow}, there exists a {solution map} $\mathcal{S}^x_{t}$ generated by
\begin{equation}\label{integralequation-new}
\begin{split}
\phi(t,x)=&\ \Bigl(\phi_0(x)+\int_{0}^t\Bigl(\omega(s,x)+\e^{-\varepsilon s}\int_0^1\int_{\mathbb{T}}\sin\bigl(2\pi(\psi-\phi(s,x))\bigr)
\rd\nu_s^y(\psi)\rd\eta_0^x(y)\\
&+\varepsilon\int_{0}^s\e^{-\varepsilon(t-\tau)}\int_0^1\int_{\mathbb{T}}\sin\bigl(2\pi(\psi-
\phi(s,x))\bigr)\rd\nu_s^y(\psi)\\
&\cdot\int_{\mathbb{T}}\sin^2\bigl(2\pi(\psi-\phi(\tau,x))\bigr)\rd\nu_{\tau}^y(\psi))
\rd y\rd\tau\Bigr)\rd s\Bigr)\mod1
\end{split}\end{equation}

Define the following generalized VE 
\begin{align}\label{Fixed-model2}
  \nu_t^x=\nu_0^x\circ (\mathcal{S}^x_{t})^{-1}[\eta_0,\nu_{\cdot},\omega],\quad x\in [0,1].
\end{align}

\begin{theorem}\label{th-tree}

Assume $\mathbf{(A4)}$ and $\mathbf{(A4)'}$. Let $\nu_0\in\cB_{\infty}$. Then there exists a unique solution $\nu_t$ to  \eqref{Fixed-model2}.
 Moreover, if $\nu_0\in\cC_{\infty}$ and  $\lim_{N=2^{m+1}-1\to\infty}d_{\infty,\BL}(\nu_{N,0},\nu_0)=0$, then
\begin{equation*}
  \lim_{N=2^{m+1}-1\to\infty}d_{\infty,\BL}(\nu_{N,t},\nu_t)=0,\quad \forall t\in[0,T^*].
\end{equation*}
In particular,
\begin{equation*}
\lim_{N=2^{m+1}-1\to\infty}d_{\BL}\Bigl(\frac{1}{N}\sum_{i=1}^N\delta_{\phi^N_i(t)},
\int_0^1\nu_t^x(\sbullet)\rd x\Bigr)=0.\end{equation*}
\end{theorem}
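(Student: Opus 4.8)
The plan is to specialize Theorem~\ref{thm-existence-VE} and Theorem~\ref{th-approx} to the present model, following the ring-network template, and to exploit that the adaptation rule has vanishing positive part. First I would verify the hypotheses: $\mathbf{(A1)}$ holds since $X=[0,1]$ with Lebesgue measure is a Polish probability space; $\mathbf{(A2)}$ and $\mathbf{(A3)}$ hold because $g(u)=\sin2\pi u$ and $h(u)=-\sin^22\pi u$ are Lipschitz on $\T$; $\mathbf{(A4)}$ and $\mathbf{(A4)}'$ are hypotheses. The decisive structural fact is $h(u)=-\sin^22\pi u\le0$, so $h^+\equiv0$. By the notation table this forces $T^h_{\beta,1}=+\infty$ and $\gamma^h_{\beta,T}=+\infty$, whence $\cB_{\gamma^h_{\beta,T}}=\cB_\infty$, and by the remark following Theorem~\ref{thm-existence-VE} the positivity of $\eta_t$ holds for all finite $t$ with no lower-bound requirement on the absolutely continuous part of $\eta_0$. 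This is crucial because the $\eta_0$ prescribed here is purely atomic, so ${\sf a}_{\eta_0}=0$ and neither $\mathbf{(A6)}$ nor $\eta_0\in\cC^{\ge\beta}$ with $\beta>0$ can hold; instead one takes $\beta=0$ throughout, which is exactly the special regime flagged before Theorem~\ref{th-approx}.

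With these reductions, existence and uniqueness of $\nu_t\in\cC(\cI,\cB_\infty)$ follow directly from Theorem~\ref{thm-existence-VE}, whose proof uses only positivity of $\eta_t$ (free here) rather than $\beta>0$. Since $\mathbf{(A7)}$ fails ($h$ is nonconstant), I claim only the measure-valued solution and assert no absolute continuity. For the approximation I would rerun the proof of Theorem~\ref{th-approx}, whose continuity inputs (Propositions~\ref{prop-continuousdependence} and~\ref{prop-sol-fixedpoint}) likewise survive with $\beta=0$ when $h^+\equiv0$, together with the relaxation noted in the remark after Theorem~\ref{th-approx} permitting $x\mapsto\eta_0^x$ to have finitely many jumps; here the only discontinuities are at $x=0$ and $x=1/2$.

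The single model-specific ingredient is $\lim_{N\to\infty}\mathbf{d}_{\BL}(\eta_{N,0},\eta_0)=0$. Inserting $W^N_{i,j,0}=N\mathbbm{1}_{\{2i,2i+1,\lfloor i/2\rfloor\}}(j)$ into \eqref{discretize-2}, for $x\in I_i^N$ with all three indices in $\{1,\dots,N\}$ one obtains
\[\eta_{N,0}^x=\delta_{(4i-1)/(2N)}+\delta_{(4i+1)/(2N)}+\delta_{(2\lfloor i/2\rfloor-1)/(2N)},\]
the two child atoms disappearing once $2i>N$. With $i=\lfloor xN\rfloor+1$, the child positions $(4i\pm1)/(2N)\to2x$ and the parent position $(2\lfloor i/2\rfloor-1)/(2N)\to x/2$, reproducing $2\delta_{2x}+\delta_{x/2}$ on $(0,1/2]$ and $\delta_{x/2}$ on $(1/2,1]$. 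As in the ring computation, testing against $f\in\mathcal{BL}_1(X)$ and using $|f(a)-f(b)|\le|a-b|$ gives $d_{\BL}(\eta_{N,0}^x,\eta_0^x)=O(1/N)$ uniformly in $x$, so $\mathbf{d}_{\BL}(\eta_{N,0},\eta_0)=O(1/N)\to0$. Granting this, Theorem~\ref{th-approx} yields $\mathbf{d}_{\BL}(\nu_{N,t},\nu_t)\to0$, and the aggregate limit for $\tfrac1N\sum_{i=1}^N\delta_{\phi_i^N(t)}$ against $\int_X\nu_t^x\rd\mu_X(x)$ follows by Dominated Convergence, as in the ring-network proof.

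The main obstacle is this graph-measure estimate: one must carefully follow how the parent index $\lfloor i/2\rfloor$ and the children $2i,2i+1$ map into the uniform partition, and treat the boundary layers --- the root near $x=0$, the transition across $x=1/2$ where the children leave $\{1,\dots,N\}$, and the leaf level --- on which fewer than three atoms survive. These exceptional sets have $\mu_X$-measure $O(1/N)$, so their contribution is absorbed into the same uniform $O(1/N)$ bound, but the index bookkeeping is where the genuine effort concentrates.
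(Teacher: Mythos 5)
Your proposal is correct and follows essentially the same route as the paper: both reduce the theorem to the single model-specific estimate $\lim_{N\to\infty}\mathbf{d}_{\BL}(\eta_{N,0},\eta_0)=0$ (the paper's proof literally opens with ``It suffices to show'' this) and then verify it by the same atom-by-atom bounded-Lipschitz computation at $(4i\pm1)/(2N)$ and $(2\lfloor i/2\rfloor-1)/(2N)$, with the same case split at $x=0$, $x\in\,]0,1/2[$, $x\in\,]1/2,1[$, and the endpoint. Your explicit justification that $h^+\equiv0$ forces $T^h_{\beta,1}=\gamma^h_{\beta,T}=+\infty$ and thereby removes the positivity constraint on the purely atomic $\eta_0$ is left implicit in the paper (it is stated only in the general remark preceding Theorem~\ref{th-approx}), so you have if anything supplied slightly more detail than the original.
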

\begin{proof}
It suffices to show that
\[\lim_{N=2^{m+1}-1\to\infty}d_{\infty,\BL}(\eta_{N,0},\eta_0)=0.\]
For any $x\in[0,1[$, let $i=i(x)=\lfloor xN\rfloor+1$. Then $x\in\Bigl[\frac{i-1}{N},\frac{i}{N}\Bigr[$.
For $x=0$, $i=1$, and $W_{i,j,0}^N=N\mathbbm{1}_{\{2,3\}}(j)$. For $x\in\bigl]0,\frac{1}{2}\bigr[$, $W_{i,j,0}^N=N\mathbbm{1}_{\{2i,2i+1,\lfloor i/2\rfloor\}}(j)$. For $x\in\bigl[\frac{1}{2},1\bigr[$, $W_{i,j,0}^N=N\mathbbm{1}_{\{\lfloor i/2\rfloor\}}(j)$. Hence
for $x=0$,
\begin{align*}
  d_{\BL}(\eta^{x}_{N,0},\eta^x_0)=&\sup_{f\in\mathcal{BL}_1([0,1])}\int_0^1f\rd\bigl(\delta_{\frac{3}{2N}}
  +\delta_{\frac{5}{2N}}-2\delta_{0}\bigr)\\
  \le&\ \Bigl|\frac{3}{2N}\Bigr|+\Bigl|\frac{5}{2N}\Bigr|=\frac{4}{N};
  \end{align*}
for $x\in]0,1/2[$,
\begin{align*}
  d_{\BL}(\eta^{x}_{N,0},\eta^x_0)=&\sup_{f\in\mathcal{BL}_1([0,1])}\int_0^1f\rd\bigl(\delta_{\frac{4i-1}{2N}}
  +\delta_{\frac{4i+1}{2N}}+\delta_{\frac{2\lfloor i/2\rfloor-1}{2N}}-2\delta_{2x}-\delta_{x/2}\bigr)\\
  \le&\ \Bigl|\frac{4i-1}{2N}-2x\Bigr|+\Bigl|\frac{4i+1}{2N}-2x\Bigr|+\Bigl|\frac{2\lfloor i/2\rfloor-1}{2N}-\frac{x}{2}\Bigr|\\
  \le&\ \frac{3}{2N}+\frac{5}{2N}+\frac{2}{2N}=\frac{5}{N};
  \end{align*}
  for $x\in\Bigl]\frac{1}{2},1\Bigr[$,
\begin{align*}
  d_{\BL}(\eta^{x}_{N,0},\eta^x_0)=&\sup_{f\in\mathcal{BL}_1([0,1])}\int_0^1f\rd\bigl(\delta_{\frac{2\lfloor i/2\rfloor-1}{2N}}-\delta_{x/2}\bigr)\\
  \le&\ \Bigl|\frac{2\lfloor i/2\rfloor-1}{2N}-\frac{x}{2}\Bigr|\le\frac{1}{N};
  \end{align*}
  for $x=1$, \begin{align*}
  d_{\BL}(\eta^{x}_{N,0},\eta^x_0)=&\sup_{f\in\mathcal{BL}_1([0,1])}\int_0^1f\rd\bigl(\delta_{\frac{2\lfloor N/2\rfloor-1}{2N}}-\delta_{1/2}\bigr)\\
  \le&\ \Bigl|\frac{(N-1)-1}{2N}-\frac{1}{2}\Bigr|=\frac{1}{N}.
  \end{align*}
This implies that
\[d_{\infty,\BL}(\eta_{N,0},\eta_0)\le\frac{5}{N}\to0,\quad \text{as}\quad N\to\infty.\]

\end{proof}

We comment that the limit of such sequence of graphs are \emph{not} dense and hence \emph{cannot} be represented as a ``graphon'' \cite{LS06}, instead, it can be viewed as a symmetric digraph measure \cite{KX21} (see also \cite{KLS19}).

\section{Proofs of main results}\label{sect-proof}
\subsection{Proof of Theorem~\ref{thm-characteristic-1}}\label{subsect-proof-of-characteristic}
\begin{proof}
The proof is in similar spirit to that of \cite[Theorem~A]{GKX21}. For the reader's convenience, we provide a complete proof here. 

 Since $\nu_{\cdot}\in \mathcal{C}_{\sf b}(\R,{\mathcal{B}_{\infty}})$, we have $$\|\nu_{\cdot}\|_{\R}=\sup_{t\in\R}\|\nu_t\|<\infty.$$
For simplicity, let $\mathcal{N}=[0-t_*,0+t_*]$ for any fixed $0<t_*<\varepsilon^{-1}$, {where we recall that $\varepsilon$ is slow time-scale of the underlying {SDGM} in \eqref{characteristic-Eq-2}.}
In the following, we prove the conclusions in several steps. First we show the solution exists locally in a subset of $\mathcal{C}(\mathcal{N},\mathcal{C}(X,\T\times\cM(X)))$ {(Step 1 and Step 2)}, and then we prove the uniqueness of solutions in $\mathcal{C}(\mathcal{N},\mathcal{C}(X,\T\times\cM(X)))$ using Gronwall inequality {(Step 3)}. Then we extend the solution to an open maximal existence interval, and use a priori estimates to show global existence {(Step 4)}. 

To show that the solution uniquely exists in the bigger space $\mathcal{C}(\mathcal{N},\mathcal{B}(X,\T\times\cM(X)))$, all the arguments still remain, by simply replacing $\mathcal{C}(\mathcal{N},\mathcal{C}(X,\T\times\cM(X)))$ by $\mathcal{C}(\mathcal{N},\mathcal{B}(X,\T\times\cM(X)))$. Note that Step~1-(c) below is not needed in this case.

 {For $(\phi,\eta_{\cdot}),(\varphi,\xi_{\cdot})\in \mathcal{C}(\mathcal{N},\mathcal{B}(X,\mathbb{T}\times\cM(X)))$, $t\in\cN$, $x\in X$, define}
{\[d_{\T,\TV}\left((\phi(t,x),\eta_t^x),(\varphi(t,x),\xi_t^x)\right)\coloneqq
d_{\mathbb{T}}(\phi(t,x),\varphi(t,x))+d_{\TV}(\eta_t^x,\xi_t^x),\]
$${d}_{\infty,\T,\TV}((\phi(t),\eta_t),(\varphi(t),\xi_t))\coloneqq\sup_{x\in X}d_{\T,\TV}((\phi(t,x),\eta_t^x),(\varphi(t,x),\xi_t^x)),$$
\[d^{\cN}_{\infty,\T,\TV}((\phi,\eta_{\cdot}),(\varphi,\xi_{\cdot}))\coloneqq\sup_{t\in\cN}d_{\infty,\T,\TV}((\phi(t),\eta_t),(\varphi(t),\xi_t))\]}

{Note that $\mathcal{C}(\mathcal{N},\mathcal{B}(X,\T\times\cM(X)))$  and $\mathcal{C}(\mathcal{N},\mathcal{C}(X,\T\times\cM(X)))$ are both complete metric spaces under the uniform metric $d^{\mathcal{N}}_{\infty,\T,\TV}$, which can be readily proved as \cite[Proposition~2.6]{KX21}.} To show the local existence of solutions, we will construct a subspace $\Omega$ of $\mathcal{C}(\mathcal{N},\mathcal{C}(X,\T\times\cM(X)))$ and apply the Banach fixed point theorem on the space $\Omega$.

Let $\sigma\ge\frac{(\|\eta_0\|+\|\nu_{\cdot}\|_{\cN}\|h\|_{\infty})}{(\varepsilon t_*)^{-1}-1}$ and $$\Omega=\{(\phi,\eta)\in \mathcal{C}(\mathcal{N},\mathcal{C}(X,\T\times\cM(X)))\colon \phi(0,x)=\phi_0(x),\ \eta_{t}^x|_{t=0}=\eta_0^x,\ \forall x\in X;\ \|\eta_{\cdot}-(\eta_0)_{\cdot}\|_{\cN}\le\sigma\}$$ Here we abuse $\eta_0\in \mathcal{C}(\mathcal{N}, \cC(X,\cM(X))$ for the constant function $$(\eta_0)_t^x\equiv\eta_0^x\ \text{for}\ t\in\mathcal{N}\ \text{and}\ x\in X.$$ {By the assumption  {$\mathbf{(A5)}$}, $\eta_0\in\mathcal{C}(X,\T\times\cM(X))$.} The space $\Omega$ is complete since it is a closed subset of the complete metric space $\mathcal{C}(\mathcal{N},\mathcal{C}(X,\T\times\cM(X)))$. Define the operator $\mathcal{A}=(\mathcal{A}^{1},\mathcal{A}^{2})=\{\mathcal{A}^{x}\}_{x\in X}=\{(\mathcal{A}^{1,x},\mathcal{A}^{2,x})\}_{x\in X}$ from $\Omega$ to $\Omega$: For every $x\in X$ and $(\phi,\eta)\in\Omega$,
\begin{align}\label{characteristic-1}
\mathcal{A}^{1,x}(\phi,\eta)(t)=&\ \Bigl(\phi_0(x)+\int_{0}^t\Bigl(\omega(\tau,x)
+\int_X\int_{\sT}g(\psi-\phi(\tau,x))
\rd\nu_{\tau}^y(\psi)\rd\eta_{\tau}^x(y)
\Bigr)\rd\tau\Bigr)\mod1\\
\label{characteristic-2}(\mathcal{A}^{2,x}(\phi,\eta)(t))(\sbullet)=&\ \eta_0^x(\sbullet)
-\varepsilon\int_{0}^t\eta_{\tau}^x(\sbullet)\rd\tau -\varepsilon\left(\int_{0}^t\int_{\sT}h(\psi-\phi(\tau,x))\rd\nu_{\tau}^{\sbullet}(\psi)\rd\tau\right)
\lambda(\sbullet),\quad y\in X.
\end{align}

In Steps~1 and 2 below, we will show that the $n$-th iteration $\mathcal{A}^n$ for some large $n\in\N$ is a contraction from $\Omega$ to $\Omega$.

\begin{enumerate}
  \item[Step~1.] $\mathcal{A}$ is a mapping from $\Omega$ to $\Omega$.

  \begin{enumerate}
    \item[Step~1(a).] It is obvious that $\mathcal{A}^{1,x}(\phi,\eta)(t)\in\mathbb{T}$, since \eqref{characteristic-1} is regarded as an equation modulo 1. That $\mathcal{A}^{2,x}(\phi,\eta)(t)\in\cM(X)$ for $t\in\mathcal{N}$ follows from
\[\sup_{\tau\in\mathcal{N}}\sup_{y\in X}\left|\int_{\sT}h(\psi-\phi(\tau,x))\rd\nu_{\tau}^y(\psi)\right|\le\|h\|_{\infty}\|\nu_{\cdot}\|_{\cN}\]as well as
\[\sup_{t\in\mathcal{N}}|\eta_t^x|(X)\le\|\eta_0\|+\sigma<\infty.\]
    \item[Step~1(b).] {$t\mapsto \mathcal{A}(\phi,\eta)(t)$ is continuous. 
    Let $t,\ t'\in\mathcal{N}$ with $t<t'$. First,
       \begin{align*}
       &d_{\T}(\mathcal{A}^{1,x}(\phi,\eta)(t),\mathcal{A}^{1,x}(\phi,\eta)(t'))\\
       \le&\ |\mathcal{A}^{1,x}(\phi,\eta)(t)-\mathcal{A}^{1,x}(\phi,\eta)(t')|\\
       \le&\int_t^{t'}\left|\omega(\tau,x)+\int_X\int_{\sT}g(\psi-\phi(\tau,x))
\rd\nu_{\tau}^y(\psi)\rd\eta_{\tau}^x(y)
\right|\rd\tau\\
\le&\left(\sup_{\tau\in[t,t']}|\omega(\tau,x)|+\|g\|_{\infty}\|\nu_{\cdot}\|_{\cN}\sup_{t\in\cN}
|\eta_t^x|(X)\right)|t-t'|\\
\le&\left(\sup_{\tau\in[t,t']}|\omega(\tau,x)|+\|g\|_{\infty}\|\nu_{\cdot}\|_{\cN}(\|\eta_0\|
+\sigma)\right)|t-t'|
       \end{align*}

Moreover,  \begin{align*}
    &d_{\TV}(\mathcal{A}^{2,x}(\phi,\eta)(t),\mathcal{A}^{2,x}(\phi,\eta)(t'))\\
    =&\sup_{f\in\mathcal{B}_1(X)}\int_Xf\rd\left(\mathcal{A}^{2,x}(\phi,\eta)(t)
    -\mathcal{A}^{2,x}(\phi,\eta)(t')\right)\\
    \le&\ \varepsilon\int_t^{t'}\sup_{f\in\mathcal{B}_1(X)}\int_Xf(y)\rd\left(-\eta_{\tau}^x(y)
    -\int_{\T}h(\psi-\phi(\tau,x))
    \rd\nu_{\tau}^y(\psi)\rd y\right)\rd\tau\\
    \le&\ \varepsilon|t'-t|\left(\sup_{\tau\in\cN}\|\eta_0^x-\eta_{\tau}^x\|_{\cN}
    +\sup_{\tau\in\cN}\bigl\|\eta_0^x(\sbullet)+\int_{\T}
    h(\psi-\phi(\tau,x))\rd\nu_{\tau}^{\sbullet}(\psi)\lambda(\sbullet)\bigr\|_{\cN}\right)\\
    \le&\ \varepsilon|t'-t|(\sigma+\|\eta_0\|+\|\nu_{\cdot}\|_{\cN}\|h\|_{\infty})
  \end{align*}
Together it yields
\begin{align*}
&d_{\infty,\T,\TV}(\mathcal{A}(\phi,\eta)(t),\mathcal{A}(\phi,\eta)(t'))\\
\le& |t'-t|\Bigl(\varepsilon(\sigma+\|\eta_0\|+\|\nu_{\cdot}\|_{\cN}\|h\|_{\infty}) + \sup_{\tau\in[t,t']}|\omega(\tau,x)|+\|g\|_{\infty}\|\nu_{\cdot}\|_{\cN}(\|\eta_0\|
+\sigma)\Bigr)\\
\to&0,\quad \text{as}\quad |t-t'|\to0.
\end{align*}}
  \item[Step~1(c).] We show for each fixed $t\in\mathcal{N}$, $x\mapsto\mathcal{A}^{x}(\phi,\eta)(t)$ is continuous provided $x\mapsto\eta_0^x$ is so {by $(\mathbf{A5})$.}

   The continuity of $\mathcal{A}^{1,x}(\phi,\eta)(t)$ in $x$ follows from $(\mathbf{A2})$, $(\mathbf{A4})'$, the continuity of $x\mapsto\phi(t,x)$, as well as the fact that continuity of $x\mapsto \eta_{\tau}^{x}$ in total variation distance implies that in bounded Lipschitz distance, which further implies their  weak continuity, by {applying Proposition~\ref{prop-nu}(iii)}.  
  
  Next, we verify the continuity of $x\mapsto\mathcal{A}^{2,x}(\phi,\eta)(t)$. For $x,x'\in X$,
  \begin{align*}
    &d_{\TV}(\mathcal{A}^{2,x}(\phi,\eta)(t),\mathcal{A}^{2,x'}(\phi,\eta)(t))\\
    =&\sup_{f\in\mathcal{B}_1(X)}\int_Xf\rd\left(\mathcal{A}^{2,x}(\phi,\eta)(t)
    -\mathcal{A}^{2,x'}(\phi,\eta)(t)\right)\\
    \le&\ d_{\TV}(\eta_0^x,\eta_0^{x'})+\varepsilon\left|\int_{0}^{t}d_{\TV}(\eta_{\tau}^x,\eta_{\tau}^{x'})
    \rd\tau\right|\\
    &+\varepsilon\left|\int_{0}^{t}\int_X\int_{\T}|h(\psi-\phi(\tau,x))-h(\psi-\phi(\tau,x'))|
    \rd\nu_{\tau}^y(\psi)\rd y\rd\tau\right|\\
    \le&\ d_{\TV}(\eta_0^x,\eta_0^{x'})+\varepsilon\left|\int_{0}^{t}d_{\TV}(\eta_{\tau}^x,\eta_{\tau}^{x'})
    \rd\tau\right|\\
    &+\varepsilon\Lip(h)\|\nu_{\cdot}\|_{\cN}\left|\int_{0}^{t}|\phi(\tau,x)-\phi(\tau,x')|\rd\tau\right|\to0,\quad \text{as}\quad {|x-x'|}\to0,
  \end{align*}
by the Dominated Convergence Theorem, since for every $\tau\in\cN$, 
$$d_{\TV}(\eta_{\tau}^x,\eta_{\tau}^{x'}),\ |\phi(\tau,x)-\phi(\tau,x')|\to0,\quad \text{as}\ {|x-x'|}\to0,$$ due to $\eta_{\tau}\in \mathcal{C}(X,\cM(X))$ and $\phi(\tau,\cdot)\in\mathcal{C}(X,\T)$.
\item[Step~1(d).] {We show $$\|\mathcal{A}^{2}(\phi,\eta)-\eta_0\|_{\cN}\le\sigma.$$ Indeed, since $\mathcal{A}^{2}(\phi,\eta)(0)=\eta_0$, by Step~1(b),
 \begin{align*}
 \|\mathcal{A}^{2}(\phi,\eta)-\eta_0\|_{\cN}=&\sup_{t\in\mathcal{N}}\|\mathcal{A}^{2}(\phi,\eta)(t)-\eta_0\|\\
   \le&\ \sup_{t\in\mathcal{N}} \varepsilon|t|(\sigma+\|\eta_0\|+\|\nu_{\cdot}\|_{\cN}\|h\|_{\infty})\\
   \le&\ \varepsilon t_*(\sigma+\|\eta_0\|+\|\nu_{\cdot}\|_{\cN}\|h\|_{\infty})\le\sigma
 \end{align*}}
   \end{enumerate}
  \item[Step~2.] The aim is to prove that $\mathcal{A}^n$ is a contraction for some $n\in\N$. 
{Let $(\phi,\eta),\ (\varphi,\zeta)\in\Omega$. Then
  \begin{align*}
   &d_{\T}(\mathcal{A}^{1,x}(\phi,\eta)(t),\mathcal{A}^{1,x}(\varphi,\zeta)(t))\\ \le&\left|\mathcal{A}^{1,x}(\phi,\eta)(t)-\mathcal{A}^{1,x}(\varphi,\zeta)(t)\right|\\
    \le&\left|\int_{0}^t\int_X\int_{\T}(g(\psi-\phi(\tau,x))-g(\psi-\varphi(\tau,x)))
\rd\nu_{\tau}^y(\psi)\rd\eta_0^x(y)\rd\tau\right|\\
&+\left|\int_{0}^t\int_X\int_{\T}(g(\psi-\phi(\tau,x))-g(\psi-\varphi(\tau,x)))
\rd\nu_{\tau}^y(\psi)\rd(\eta_{\tau}^x(y)-\eta_0^x(y))\rd\tau\right|\\&+\left|\int_{0}^t\int_X
\int_{\T}g(\psi-\varphi(\tau,x))\rd\nu_{\tau}^y(\psi)
\rd\left(\eta_{\tau}^x(y)-\zeta_{\tau}^x(y)\right)\rd\tau\right|\\
\le&\ \Lip(g)\|\nu_{\cdot}\|_{\cN}\|\eta_0\|\left|\int_{0}^t|\phi(\tau,x)-\varphi(\tau,x)|\rd\tau\right|\\
&+\Lip(g)\|\nu_{\cdot}\|_{\cN}\sup_{\tau\in\cN}d_{\TV}(\eta_{\tau}^x,\eta_0^x)\left|\int_{0}^t
|\phi(\tau,x)-\varphi(\tau,x)|\rd\tau\right|\\
&+\|g\|_{\infty}\|\nu_{\cdot}\|_{\cN}\left|\int_{0}^td_{\TV}(\eta_{\tau}^x,\zeta_{\tau}^x)\rd\tau\right|\\
\le&\ |t|\left(\Lip(g)\|\nu_{\cdot}\|_{\cN}\Bigl(\|\eta_0\|+\sup_{\tau\in\cN}d_{\TV}(\eta_{\tau}^x,
\eta_0^x)\Bigr)\sup_{\tau\in\cN}
|\phi(\tau,x)-\varphi(\tau,x)|\right.\\
&\left.+\|g\|_{\infty}\|\nu_{\cdot}\|_{\cN}\sup_{\tau\in\cN}d_{\TV}(\eta_{\tau}^x,\zeta_{\tau}^x))\right)\\
\le&\ |t|\left(\Lip(g)\|\nu_{\cdot}\|_{\cN}(\|\eta_0\|+\sigma)\sup_{\tau\in\cN}
|\phi(\tau,x)-\varphi(\tau,x)|\right.\\&\left.+\|\nu_{\cdot}\|_{\cN}\|g\|_{\infty}\sup_{\tau\in\cN}
d_{\TV}(\eta_{\tau}^x,\zeta_{\tau}^x)\right)\\
\le&\ |t|M_{1}\sup_{\tau\in\cN}d_{\infty,\T,\TV}((\phi(\tau,x),\eta_{\tau}^x),(\varphi(\tau,x),\zeta_{\tau}^x)),
  \end{align*}
where $M_{1}=\|\nu_{\cdot}\|_{\cN}(\Lip(g)(\|\eta_0\|+\sigma)+\|g\|_{\infty})$.
   Similarly,
  \begin{align*}
&d_{\TV}(\mathcal{A}^{2,x}(\phi,\eta)(t),\mathcal{A}^{2,x}(\varphi,\zeta)(t))\\
=&\sup_{f\in\mathcal{B}_1(X)}\int_Xf\rd\left(\mathcal{A}^{2,x}(\phi,\eta)(t)-
\mathcal{A}^{2,x}(\varphi,\zeta)(t)\right)\\
\le&\ \varepsilon\left(\left|\int_{0}^td_{\TV}(\eta_{\tau}^x,\zeta_{\tau}^x)\rd\tau\right|\right.\\&\left.
+\left|\int_{0}^t\int_X\int_{\T}|h(\psi-\phi(\tau,x))
-h(\psi-\varphi(\tau,x))|\rd\nu_{\tau}^y(\psi)\rd y\rd\tau\right|\right)\\
\le&\ \varepsilon\left(\left|\int_{0}^td_{\TV}(\eta_{\tau}^x,\zeta_{\tau}^x)\rd\tau\right|+
\Lip(h)\|\nu_{\cdot}\|_{\cN}\left|\int_{0}^t|\phi(\tau,x)-\varphi(\tau,x)|\rd\tau\right|\right)\\
\le&\ \varepsilon|t|\Bigl(\sup_{\tau\in\cN}d_{\TV}(\eta_{\tau}^x,\zeta_{\tau}^x)
+\Lip(h)\|\nu_{\cdot}\|_{\cN}\sup_{\tau\in\cN}|\phi(\tau,x)-\varphi(\tau,x)|\Bigr)\\
\le&\ |t|M_{2}\sup_{\tau\in\cN}d_{\T,\TV}((\phi(\tau,x),\eta_{\tau}^x),(\varphi(\tau,x),\zeta_{\tau}^x)),
  \end{align*}
  where $M_{2}=\varepsilon(1+\Lip(h)\|\nu_{\cdot}\|_{\cN})$.}

{Hence \begin{align*}
   d_{\T,\TV}(\mathcal{A}^x(\phi,\eta)(t),\mathcal{A}^x(\varphi,\zeta)(t))
   \le&\ |t|M_3\sup_{\tau\in\cN}d_{\T,\TV}((\phi(\tau,x),\eta_{\tau}^x),(\varphi(\tau,x),\zeta_{\tau}^x)),
\end{align*}
where $M_3=\max\{M_{1},M_{2}\}$. This implies that
\begin{align*}
  d_{\infty,\T,\TV}(\mathcal{A}(\phi,\eta)(t),\mathcal{A}(\varphi,\zeta)(t))
  \le&\ |t|M_3\sup_{\tau\in\cN}d_{\infty,\T,\TV}((\phi(\tau,\cdot),\eta_{\tau}),(\varphi(\tau,\cdot),\zeta_{\tau})).
\end{align*}}

Moreover, from the above estimates we can further prove that
{\begin{alignat}
  {2}\label{inequality-contraction}
  d_{\T,\TV}\left(\mathcal{A}^{x}(\phi,\eta)(t),\mathcal{A}^{x}(\varphi,\zeta)(t)\right)\le M_3\left|\int_{0}^td_{\T,\TV}((\phi(\tau,x),\eta_{\tau}^x),(\varphi(\tau,x),\zeta_{\tau}^x))\rd\tau\right|.
\end{alignat}
Repeatedly applying \eqref{inequality-contraction} yields: For $n\in\N$,
\begin{alignat}
  {2}
  d_{\T,\TV}\left((\mathcal{A}^{x}(\phi,\eta))^n(t),(\mathcal{A}^{x}(\varphi,\zeta))^n(t)\right)\le \frac{(M_3|t|)^n}{n!}\sup_{\tau\in\cN}d_{\T,\TV}((\phi(\tau,x),\eta_{\tau}^x),(\varphi(\tau,x),\zeta_{\tau}^x)),
\end{alignat}
which further implies that
\[d^{\cN}_{\infty,\T,\TV}\left((\mathcal{A}(\phi,\eta))^n,(\mathcal{A}(\varphi,\zeta))^n\right)\le \frac{(M_3t_*)^n}{n!}d^{\cN}_{\infty,\T,\TV}((\phi,\eta),
(\varphi,\zeta)).\]
Hence there exists some large $n\in\N$ such that $\frac{(M_3t_*)^n}{n!}<1$ and hence $\mathcal{A}^m$ is a contraction for all $m\ge n$. By the Banach contraction principle, there exists a unique solution $\mathcal{T}_{t}(\phi_0,\eta_0)$ for $t\in\cN$ in 
{$\Omega\subseteq\mathcal{C}(\mathcal{N},\mathcal{C}(X,\T\times\cM(X)))$} 
to the equation \eqref{characteristic-Eq-1}-\eqref{characteristic-Eq-2} of characteristics.
\item[Step~3.] In Steps~1 and 2, we only obtained the uniqueness within $\Omega$. Next, we show that the solution is unique in ${\mathcal{C}(\mathcal{N},\mathcal{C}(X,\mathbb{T}\times\cM(X)))}$. Let $(\phi,\eta),\ (\varphi,\zeta)\in {\mathcal{C}(\mathcal{N},\mathcal{C}(X,\mathbb{T}\times\cM(X)))}$ be two solutions to the IVP of \eqref{characteristic-Eq-1}-\eqref{characteristic-Eq-2} with $(\phi,\eta)\in\Omega$. Similar as in \eqref{inequality-contraction}, one can show: For $t\ge 0$,
    \begin{align*}
      d_{\infty,\T,\TV}((\phi(t,\cdot),\eta_t),(\varphi(t,\cdot),\zeta_t))\le M_3\int_{0}^t d_{\infty,\T,\TV}((\phi(\tau,\cdot),\eta_{\tau}),(\varphi(\tau,\cdot),\zeta_{\tau}))\rd\tau
      \end{align*}
    which implies by Gronwall's inequality that
    \begin{equation}\label{Eq-unique}
      d_{\infty,\T,\TV}((\phi(t,\cdot),\eta_t),(\varphi(t,\cdot),\zeta_t))=0.
    \end{equation}
    Similarly, one can show that \eqref{Eq-unique} holds for $t\le 0$. Hence $(\phi(t,\cdot),\eta_t)=(\varphi(t,\cdot),\zeta_t)$ for all $t\in\cN$. This shows that the solution to the IVP of {\eqref{characteristic-Eq-1}-\eqref{characteristic-Eq-3}} is unique in the entire set ${\mathcal{C}(\mathcal{N},\mathcal{C}(X,\mathbb{T}\times\cM(X)))}$.} 
\item[Step~4.] By Zorn's lemma, one can always extend the solution by repeating Steps~1-3 indefinitely up to a maximal existence time $T^+_{0}$ with the dichotomy:
    \begin{enumerate}
    \item[(i)] $\lim_{t\uparrow T^+_{0}}|\mathcal{T}^1_{t}(\phi_0,\eta_0)|+\|\mathcal{T}^2_{t}(\phi_0,\eta_0)\|=\infty$;
    \item[(ii)] $T^+_{0}=+\infty$.
    \end{enumerate}
    Note that $|\mathcal{T}^1_{t}(\phi_0,\eta_0)|\le1$ since $\mathcal{T}^1_{t}(\phi_0,\eta_0)\in\mathbb{T}$.
    Moreover, by \eqref{Eq-variation-2} in Proposition~\ref{prop-equivalent},
    \begin{align*}
    \|\mathcal{T}^2_{t}(\phi_0,\eta_0)\|\le&\ \|\eta_0\|+\|\nu_{\cdot}\|_{\cN}\|h\|_{\infty}\varepsilon\int_{0}^t\e^{-\varepsilon(t-\tau)}\rd\tau
    \le\|\eta_0\|+\|\nu_{\cdot}\|_{\cN}\|h\|_{\infty},\quad t\ge 0,
    \end{align*}
    which implies that case (i) will never occur. Hence $T^+_{0}=+\infty$. Analogously, one can show the minimal existence time $T^-_{0}=-\infty$. This shows that the solution globally exists on $\R$.

\end{enumerate}
We have completed the proof.
\end{proof}

\subsection{Proof of Theorem~\ref{thm-existence-VE}}\label{subsect-proof-th-existence}

\begin{proof}
{Note that $\cB_{\infty}$ is a closed subset of $\mathcal{B}(X,\cM_+(\T))$. The unique existence of solutions to the generalized VE is a result of the Banach contraction principle applied in the complete metric space 
 $\mathcal{C}(\cI,\cB_{\infty})$ \cite[{Proposition~2.11}]{KX21} under a dilated  metric $d_{\infty,\BL}^{\cI,\alpha}\coloneqq \sup_{t\in\cI}\e^{-\alpha t}d_{\infty,\BL}(\eta_t,\xi_t)$,
with an appropriately chosen $\alpha>0$, due to the mass conservation law in Proposition~\ref{prop-continuousdependence}(i).} The arguments are analogous to those in the proof of \cite[{Proposition~3.5}]{KX21}, based on Proposition~\ref{prop-continuousdependence}.

{Note that $\nu_{\cdot}\in\mathcal{C}(\cI,\mathcal{C}_{\infty})$ follows from Proposition~\ref{prop-continuousdependence}(i).}
\end{proof}

\subsection{Proof of Theorem~\ref{th-approx}}\label{subsect-proof-th-approx}

\begin{proof}
{The idea of the proof is analogous to that of \cite[{Theorem~4.11}]{KX21}. We will prove the approximation of the solution to the generalized Vlasov equation in four steps.} 

{
Based on the continuous dependence of the solutions on the frequency function, the SDGM, as well as the initial distribution (Proposition~\ref{prop-sol-fixedpoint}), we aim to construct three auxilliary generalized VEs, each replacing one of the three variables by their approximation, and show convergence using the triangle inequalities.} 
\medskip

{\noindent Step I. Show that ${(\nu^{m,n}_{\cdot})}$ defined in \eqref{Eq-approx} is the solution to the generalized VE associated with ${\eta^{m,n}_0}$ and $\omega^m$: \begin{equation}\label{Eq-fixed-approx}{(\nu^{m,n}_{\cdot})}
=\cF[{\eta^{m,n}_0},{(\nu^{m,n}_{\cdot})},\omega^m]{(\nu^{m,n}_{\cdot})}.\end{equation}
To prove this, we calculate $\cF[{\eta^{m,n}_0},{(\nu^{m,n}_{\cdot})},\omega^m]{(\nu^{m,n}_{\cdot})}$ explicitly and show that ${(\nu^{m,n}_{\cdot})}$ satisfies \eqref{Eq-fixed-approx}. Then by the uniqueness of solutions from Theorem~\ref{thm-existence-VE}, we prove that ${(\nu^{m,n}_{\cdot})}$ is the unique solution to \eqref{Eq-fixed-approx}. We first need to examine the equation of characteristics \eqref{integralequation} associated with ${\eta^{m,n}_0}$ and $\omega^m$.} 
 
{We first prove the equivalence of \eqref{lattice} and \eqref{integralequation} associated with ${\eta^{m,n}_0}$ and $\omega^m$.}

{ 
Let $W_{k,i,0}^{m,n}=b_{k,m,i}$ and $W^{m,n}_{0}=(W^{m,n}_{1,0},W^{m,n}_{2,0})$ with $W^{m,n}_{k,0}=(W^{m,n}_{k,i,0})_{1\le i\le m}$ for $k=1,2$. 
By Proposition~\ref{prop-well-posed-lattice},  let $\cQ_{t}[W^{m,n}_0,\omega^m]$ be the solution map generated by  \eqref{lattice}  such that $$\Phi^{m,n}(t)=\cQ_{t}[W^{m,n}_0,\omega^m]\Phi^{m,n}_0,\quad \text{with}\quad \Phi^{m,n}_0=(\varphi^{m,n}_{(i-1)n+j})_{1\le i\le m,1\le j\le n}.$$
In the following, we verify that this 
solution map coincides with 
$\mathcal{S}^x_{t}[\eta^{m,n}_0,\nu^{m,n}_{\cdot},\omega^m]$. More precisely, by the uniqueness of solutions of \eqref{lattice} as well as those of \eqref{integralequation}, it suffices to show that solutions to the following equation solve \eqref{lattice}:  For $i=1,\ldots,m$, for $x\in A^m_i$, $j=1,\ldots,n$,
\begin{equation}\label{Eq-discrete-semiflow}\phi_{(i-1)n+j}(t)
=\cS^x_{t}[\eta^{m,n}_{0},\nu^{m,n}_{\cdot},\omega^m]\varphi^{m,n}_{(i-1)n+j},\end{equation}
where $\eta^{m,n}_{0}=\eta^{m,n}_{0,1}-\eta^{m,n}_{0,2}$  is given by: For $k=1,2$, $i=1,\ldots,m$, $x\in A_i^m$,
$$(\eta^{m,n}_{0,k})^{x}=\frac{W^{m,n}_{k,i,0}}{n}\sum_{j=1}^n\delta_{y^{k,m,n}_{(i-1)n+j}},$$
and
$$(\nu^{m,n}_{t})^x\coloneqq\frac{a_{m,i}}{n}\sum_{j=1}^n
\delta_{\phi^{m,n}_{(i-1)n+j}(t)},$$
\begin{equation}
  \label{omega}\omega^m(t,x)=\omega^m_i(t).
\end{equation}
Next, we explicitly calculate each term in \eqref{integralequation} associated with $\eta^{m,n}_{0,k}$, $\nu^{m,n}_{t}$ and $\omega^m$. Note that for $x\in A_i^m$, 
\begin{align}
 \nonumber  &\int_X\int_{\T}g(\psi-\phi(s,x))\rd(\nu^{m,n}_s)^y(\psi)\int_{\T}
h(\psi-\phi(\tau,x)))
  \rd(\nu^{m,n}_{\tau})^y(\psi)\rd y\\
  \nonumber   = &\ \sum_{p=1}^m\lambda(A^m_p)\frac{a_{m,p}}{n}\sum_{\ell=1}^n
  g(\phi^{m,n}_{(p-1)n+\ell}(s)-\phi(s,x))\cdot\frac{a_{m,p}}{n}\sum_{\ell'=1}^nh(\phi^{m,n}_{(p-1)n+\ell'}(\tau)-\phi^{m,n}_{(i-1)n+j}(\tau))\\
 \label{2-term} = &\ \sum_{p=1}^m\lambda(A^m_p)\frac{a_{m,p}^2}{n^2}\sum_{\ell=1}^n\sum_{\ell'=1}^n
  g(\phi^{m,n}_{(p-1)n+\ell}(s)-\phi(s,x))h(\phi^{m,n}_{(p-1)n+\ell'}(\tau)-\phi(\tau,x)),
\end{align}
\begin{align}
 \nonumber    &\int_X\int_{\T}g(\psi-\phi(s,x))\rd(\nu^{m,n}_{s})^y(\psi)\rd(\eta^{m,n}_{0,k})^{x}(y)\\
 \nonumber    = &\ \frac{W^{m,n}_{k,i,0}}{n}\sum_{\ell=1}^n\int_{\T}g(\psi-\phi(s,x))
  \rd(\nu^{m,n}_{s})^{y^{k,m,n}_{(i-1)n+\ell}}(\psi)\\
 \nonumber    = &\ \frac{W^{m,n}_{k,i,0}}{n}\sum_{\ell=1}^n\sum_{p=1}^n
  \mathbbm{1}_{A_p^m}(y^{k,m,n}_{(i-1)n+\ell})
  \frac{a_{m,p}}{n}\sum_{\ell'=1}^ng(\phi^{m,n}_{(p-1)n+\ell'}(s)-\phi(s,x))\\
 \label{3-term} = &\ \frac{W^{m,n}_{k,i,0}}{n}\sum_{\ell=1}^n
  \frac{a_{m,q^{k,m,n}_{i,\ell}}}{n}\sum_{\ell'=1}^ng(\phi^{m,n}_{(q^{k,m,n}_{i,\ell}-1)n+\ell'}(s)
  -\phi(s,x)),
\end{align}
where we recall $1\le q^{m,n}_{i,\ell}\le m$ such that $y^{m,n}_{(i-1)n+\ell}\in A^m_{q^{m,n}_{i,\ell}}$.}

Plugging the above four expressions \eqref{omega},  \eqref{2-term} and \eqref{3-term} into \eqref{integralequation} yields that solutions to \eqref{Eq-discrete-semiflow} are $\Phi^{m,n}=(\phi^{m,n}_{(i-1)n+j})_{1\le i\le m; 1\le j\le n}$ that solves \eqref{lattice}. 

Hence we can conclude that \begin{equation}\label{Eq-discrete-fixed-point}(\nu^{m,n}_{t})^x={(\nu^{m,n}_0)^x}\circ \Bigl(\cS^x_{t}[\eta_0^{m,n},\nu^{m,n}_{\cdot},\omega^m]\Bigr)^{-1},\quad x\in X,\end{equation}i.e., \eqref{Eq-fixed-approx} holds. To see this,
pick an arbitrary Borel measurable set $B\in\mathcal{B}(\mathbb{T})$, let $f=\mathbbm{1}_B$.
Then for $x\in A^m_i$,
\begin{align*}
  &\int_{\sT} f\rd\nu_{m,n,0}^x\circ \Bigl(\cS^x_{t}[{\eta^{m,n}_0},{(\nu^{m,n}_{\cdot})},\omega^m]\Bigr)^{-1}\\
  = &\ \int_{\sT} f\circ \cS^x_{t}[{\eta^{m,n}_0},{(\nu^{m,n}_{\cdot})},\omega^m]\rd {(\nu^{m,n}_0)}^x\\
  = &\ \frac{a_{m,i}}{n}
  \sum_{j=1}^nf\lt(\cS^x_{t}[{\eta^{m,n}_0},{(\nu^{m,n}_{\cdot})},\omega^m]\varphi^{m,n}_{(i-1)n+j}\rt)\\
  = &\ \frac{a_{m,i}}{n}
  \sum_{j=1}^nf\lt(\phi^{m,n}_{(i-1)n+j}(t)\rt)=\int_{\T}f\rd {(\nu^{m,n}_t)^x},
\end{align*}
which shows that \eqref{Eq-discrete-fixed-point} holds, since $B$ was arbitrary and $X=\cup_{i=1}^mA^m_i$.

\medskip

\noindent Step II. Construct an auxiliary approximation based on continuous dependence on the graph measures. Since $\nu_0\in \mathcal{C}_{{{\infty}}}$, {by  Theorem~\ref{thm-existence-VE}}, let ${\widehat{\nu}^{m,n}_{\cdot}}$ be the solution to the generalized VE in $\mathcal{C}(\cI,\cC_{{{\infty}}})$
\[{\widehat{\nu}^{m,n}_{\cdot}}=\cF[{\eta^{m,n}_0},{\widehat{\nu}^{m,n}_{\cdot}},\omega]{\widehat{\nu}^{m,n}_{\cdot}}\] with ${\widehat{\nu}^{m,n}_0}=\nu_0$.
By Proposition~\ref{prop-sol-fixedpoint}(iii), we have\begin{equation}\label{Eq-18}
\lim_{m\to\infty}\lim_{n\to\infty}d_{\infty,\BL}(\nu_t,{\widehat{\nu}^{m,n}_{t}})=0,
\end{equation}
since $$\lim_{m\to\infty}\lim_{n\to\infty}d_{\infty,\BL}(\eta_0,{\eta^{m,n}_0})=0.$$

\medskip

\noindent Step III. Construct another auxiliary approximation based on continuous dependence on $\omega$. Let ${\bar{\nu}^{m,n}_{\cdot}}$ be the solution to the fixed point equation
\[{\bar{\nu}^{m,n}_{\cdot}}=\cF[{\eta^{m,n}_0},{\bar{\nu}^{m,n}_{\cdot}},\omega^m]{\bar{\nu}^{m,n}_{\cdot}}\] with $\bar{\nu}_{m,n,0}=\nu_0$.
By Lemma~\ref{le-graph} and Lemma~\ref{le-omega}, \begin{equation}
  \label{uniform-eta}\sup_{m,n\in\N}\|{\eta^{m,n}_0}\|+\sup_{m\in\N}|\omega^m|<\infty
\end{equation} are uniformly bounded,
which implies that \[C=\sup_{m,n\in\N}T{\|\eta^{m,n}_0\|}\textnormal{e}^{{L_5}({\eta^{m,n}_0})T}<\infty.\]

By Proposition~\ref{prop-sol-fixedpoint}(ii), \begin{alignat*}{2}
&d_{\infty,\BL}({\bar{\nu}^{m,n}_{t}},{\widehat{\nu}^{m,n}_{t}})
\le C\|\omega-\omega^m\|_{\infty,\cI},
\end{alignat*}
which implies that
\begin{equation}
  \label{Eq-19}
\lim_{m\to\infty}\lim_{n\to\infty}d_{\infty,\BL}({\bar{\nu}^{m,n}_{t}},{\widehat{\nu}^{m,n}_{t}})=0.
\end{equation}\medskip

\noindent Step IV. Since ${(\nu^{m,n}_{\cdot})}$ is the solution to the generalized VE
\[{(\nu^{m,n}_{\cdot})}=\cF[{\eta^{m,n}_0},{(\nu^{m,n}_{\cdot})},\omega^m]{(\nu^{m,n}_{\cdot})}\] with initial condition $\nu^{m,n}_0$, by Lemma~\ref{le-ini-2} and Proposition~\ref{prop-sol-fixedpoint}(i),
\[
d_{\infty,\BL}({\nu^{m,n}_t},{\bar{\nu}^{m,n}_{t}})
\le \textnormal{e}^{{L_5}({\eta^{m,n}_0})t}
d_{\infty,\BL}({\nu^{m,n}_0},\nu_0).
\]
Similarly, $\sup_{m,n\in\N}{L_5}({\eta^{m,n}_0})<\infty$ by \eqref{uniform-eta}, and thus
\begin{equation}\label{Eq-20}
  \lim_{m\to\infty}\lim_{n\to\infty}d^{\cI}_{\infty,\BL}({\nu^{m,n}_{\cdot}},{\bar{\nu}^{m,n}_{\cdot}})=0.
\end{equation} 

In sum, from \eqref{Eq-18}, \eqref{Eq-19}, and \eqref{Eq-20}, by triangle inequality it yields that
\[\lim_{m\to\infty}\lim_{n\to\infty}d^{\cI}_{\infty,\BL}(\nu_{\cdot},{\nu^{m,n}_{\cdot}})=0.\]
\smallskip
\end{proof}

\section{Discussions and outlook}\label{sect-discussion}
\subsection{Sign of the underlying graphs}
{
For instance, when $h$ is a signed function like many harmonic functions, or even the initial graph is the empty graph where all edge weights are zero, it hardly is possible to separate the positive part and  negative part of the evolving graph in a generic way directly.}

{The sign change in a physical or biological sense  implies the feedback the underlying graph receives from the dynamics on the graph can be \emph{alternatively}  inhibitory and excitatory, which is actually a scenario observed in many neural networks. Hence it could arguably reflects the \emph{nature} of many such real-world models.}

{In what follows, we propose a new framework where  the positive part and the negative part of the evolving  SDGM is trackable, by separating the positive adaptation rule from the negative one} 
{\begin{alignat}{2}\label{coevolution-positive-negative-a}
\dot{\phi}_i =&\ \omega_i+\frac{1}{N}\sum_{j=1}^NW^1_{ij}(t)g_1(\phi_j-\phi_i)-\frac{1}{N}\sum_{j=1}^NW^2_{ij}g_2(\phi_j-\phi_i),\\
\label{coevolution-positive-negative-b}
\dot{W}^1_{ij} =& -\varepsilon_1(W_{ij}-h_1(\phi_j-\phi_i)),
\\
\label{coevolution-positive-negative-c}
\dot{W}^2_{ij} =& -\varepsilon_2(W_{ij}-h_2(\phi_j-\phi_i)),
\end{alignat}
where $h_1$ and $h_2$ are positive functions, and $\varepsilon_1$ and $\varepsilon_2$ are two small positive numbers accounting for different time scales of the evolution of the graphs  relative to the evolution of the oscillators. Here one may take $h_1$ and $h_2$ as the positive and negative part of $h$ in \eqref{coevolution-b}, respectively. It is easily observed in terms of the variation of parameters formula that $W^1$ and $W^2$ will remain positive for all times.} 
{While in contrast to \eqref{coevolution-a}-\eqref{coevolution-b}, such a model might be \emph{less realistic} as the underlying graphs may have no chance to alter the sign of their edge weights, it does become mathematically more tractable. So it would not be surprising if the approach used in this paper may extend to the model \eqref{coevolution-positive-negative-a}-\eqref{coevolution-positive-negative-c}.}

\subsection{A model with non-local adaptivity}

{In this paper, the edge weights of the underlying digraph depend \emph{locally} on the dynamics of the oscillators, i.e., the adaptation of an edge is made only based on its  two vertices. It will be interesting to study where long-range nonlocal adaptations are incorporated, e.g., consider the following system
\begin{align*}
\dot{\phi}_i=& \ \omega_i(t)+\frac{1}{N}\sum_{j=1}^NW_{ij}(t)g(\phi_j-\phi_i),\\
\dot{W}_{ij}=&-\varepsilon\Bigl(W_{ij}+\sum_{k=1}^Na_{jk}h_1(\phi_j-\phi_k)+\sum_{k=1}^Nb_{ik}h_2(\phi_j-\phi_k)\Bigr),
\end{align*}
where $a_{jk}$ and $b_{ik}$ stand for the connectivity between oscillators $j$, $k$ and that between oscillators $i$, $k$. Similarly, the evolution of edge weights may depend on the edge weights also nonlocally:  the evolution of $W_{ij}$ may depend on $W_{ik}$ and $W_{jk}$ for all $k=1,\ldots,N$. Such more general models seem desirable for the sake of their potential applications \cite{BGKKY23}. Nevertheless, new appropriate perspectives and methods are called for to  study the MFL.} 

\subsection{Other co-evolutionary models}
{To get around the difficulty due to the infinite dimensionality caused by the generic mechanism that the dynamics on the network cannot be decoupled from that of the network, we propose the following {co-evolutionary} network:
\begin{equation}
\begin{split}
\dot{\phi}_i(t) =&\ \omega_i(t)+\frac{1}{N}\sum_{j=1}^NW_{ij}u_i(t)v_j(t)g(\phi_j-\phi_i),\\
\dot{u}_{i}(t) =& -\varepsilon_1 (u_i(t)-\frac{1}{N}\sum_{j=1}^Na_{ij}h_1(\phi_j(t)-\phi_i(t))),\\
\dot{v}_{i}(t) =&  -\varepsilon_2 (v_i(t)-\frac{1}{N}\sum_{j=1}^Nb_{ij}h_2(\phi_j(t)-\phi_i(t))),
\end{split}
\end{equation}
where $\phi_i$ is the phase of the $i$-th oscillator, $h_1$, $h_2$ are the adaptivity functions, $W=(W_{ij})_{1\le i,j\le N}$ is a static matrix as a module to generate the time-dependent underlying coupling graph with the adjacency matrix $W_{ij}(t)=\bar{W}_{ij}u_i(t)v_j(t)$ that coevolves with the dynamics of the oscillators. For this network,  one can treat the adaptive network for $\phi$ as a coupled network for the triple $(\phi,u,v)$, and hence the problem reduces to a finite dimensional problem so that the classical approach in \cite{KX21} for studying MFL of static networks applies.} 

{To incorporate certain \emph{nonlinear} self-adaptation ($G'$), we propose the following two models:
\begin{equation}\label{eq:modification-1}
\begin{split}
\dot{\phi}_i=&\ \omega_i(t)+\frac{1}{N}\sum_{j=1}^NW^{(1)}_{ij}(t)g(t,\phi_j,\phi_i),\\
\dot{W}^{(1)}_{ij}=&\ -\varepsilon (G'(W^{(1)}_{ij}))^{-1}h(t,\phi_j,\phi_i);
\end{split}
\end{equation}
\begin{equation}\label{eq:modification-2}
\begin{split}
\dot{\phi}_i=&\ \omega_i(t)+\frac{1}{N}\sum_{j=1}^NW^{(2)}_{ij}(t)g(t,\phi_j,\phi_i)\\
\dot{W}^{(2)}_{ij}=&\ -\varepsilon \bigl((G'(W^{(2)}_{ij}))^{-1}G(W_{ij}^{(2)})+h(t,\phi_i,\phi_j)\bigr),
\end{split}
\end{equation}
where $G$ in either case is assumed to be continuously differentiable with an invertible derivative so that we can represent the edge weights by applying the chain rule and product rule to the second equations of \eqref{eq:modification-1} and  \eqref{eq:modification-2} respectively:
\[W^{(1)}_{ij}(t) = G^{-1}\bigl(-\varepsilon G(W_{ij}^{(1)}(0))\int_0^t h(s,\phi_i(s),\phi_j(s))\rd s\bigr)\]
\[W^{(2)}_{ij}(t) =G^{-1}\bigl(\e^{-\varepsilon s}G(W^{(2)}_{ij}(0))-\varepsilon\int_0^t\e^{-\varepsilon (t-s)}h(s,\phi_i(s),\phi_j(s))\rd s\bigr)\]
Note that in both cases, the way that the \emph{connection} (in term of $W_{ij}(0)$ rather than the coupling in terms of $g$) among nodes on the graph become \emph{nonlinear}. We would like to clarify that these two models are proposed more out of the mathematical curiosity, considering the potential new technical challenges that might occur owing to this nonlinearity. 
For the influence of the nonlinearity of the graph limit on the MFL of the network, we refer the interested reader to, e.g.,  \cite{CDP24}. 
}

\subsection{Technical extensions}

{The key approach of this paper may still be valid with other necessary  technical ingredients added, under the following relaxed assumptions:
\begin{enumerate}
\item[$\bullet$] the initial time $0$ changed to be an arbitrary finite time $t_0$.
\item[$\bullet$] $g(\psi-\phi)$ and $h(\psi-\phi)$ replaced by $g(t,\psi,\phi)$ and $h(t,\psi,\phi)$.
\item[$\bullet$] $\lambda$ changed to be an arbitrary probability measure on $X$.
\item[$\bullet$] the linear vector field of the edge weights replaced by certain nonlinear one (e.g.,  \eqref{eq:modification-1} and \eqref{eq:modification-2}) so that the dynamics of the oscillators can still be decoupled from that of the edge weights.
\item[$\bullet$] the underlying graph changed to be random.
\item[$\bullet$] one type of interaction (in terms of the underlying graph or coupling function) extended to finitely many interactions (so long as the variation of parameters trick still applies).  
\item[$\bullet$] the pairwise interaction changed to higher order interactions (with the techniques from \cite{KX21} adapted to those from \cite{KX22}).
\item[$\bullet$] the deterministic node dynamics (in terms of ODEs) changed  to stochastic dynamics (e.g., in terms of SDEs).
\end{enumerate}
}
\subsection{Challenge regarding absolute continuity} {Generically, even if one is able to obtain absolute continuity of the solution to the generalized Vlasov equation, the PDE for its density is reasonably expected to be one on an infinite dimensional state space (i.e., a functional PDE). Classical conditions for absolute continuity of the MFL based on Rademacher's change of variables formula \cite{EG15} assume the equation of characteristics to generate a Lipschitz flow. It would be interesting to construct simple networks that might serve as examples or counter-examples, in order to gain a better understanding of what is the essential mechanism responsible for absolute continuity of the MFL or Lipschitz flow of the equation of characteristics, and further to explore the physical or biological explanation behind these phenomena.}

\subsection*{Acknowledgments}
The authors appreciate the high-value comments from both referees which help improve the presentation of this manuscript in an indispensable way. MAG and CK gratefully thank the TUM International Graduate School
of Science and Engineering (IGSSE) for support via the project ``Synchronization in Co-Evolutionary
Network Dynamics (SEND)''. CK also acknowledges partial support by a Lichtenberg Professorship
funded by the VolkswagenStiftung. CX acknowledges support by TUM Foundation Fellowship funded by TUM University Foundation, the Alexander von Humboldt Fellowship funded by the Alexander-von-Humboldt Foundation, the Simons Travel Grant by Simons Foundation, and an internal start-up funding from the University of Hawai'i at M\={a}noa.

\section*{Competing Interests Declaration}
The authors have no competing interests.

\bibliographystyle{plain}
\bibliography{references}

\appendix

\section{Gronwall inequalities}\label{appendix-Gronwall}
The following is a second order Gronwall-Bellman inequality.
\begin{proposition}\cite{P73}\label{prop-Gronwall}
Let $T>0$ and $u,\ f,\ g\in\cC(\cI,\R^+)$\footnote{Recall that $\cI=[0,T]$.}. If \[u(t)\le u_0+\int_0^tf(s)u(s)\rd s+\int_0^tf(s)\int_0^sg(\tau)u(\tau)\rd\tau{\rd s},\quad t\in\cI,\]
then
\[u(t)\le u_0\left(1+\int_0^tf(s)\exp\left(\int_0^s(f(\tau)+g(\tau))\rd\tau\right)\rd s\right),\quad t\in\cI.\]
\end{proposition}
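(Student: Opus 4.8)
The plan is to reduce this second-order inequality to the classical first-order Gronwall inequality by a single well-chosen auxiliary function. First I would denote the entire right-hand side of the hypothesis by
\[
v(t)=u_0+\int_0^tf(s)u(s)\rd s+\int_0^tf(s)\left(\int_0^sg(\tau)u(\tau)\rd\tau\right)\rd s,
\]
so that $u(t)\le v(t)$ on $\cI$ and $v(0)=u_0$. Since $v$ is absolutely continuous, differentiating gives $v'(t)=f(t)u(t)+f(t)\int_0^tg(\tau)u(\tau)\rd\tau$, and using $u\le v$ together with the nonnegativity of $f$ and $g$ yields the differential inequality $v'(t)\le f(t)\bigl(v(t)+\int_0^tg(\tau)v(\tau)\rd\tau\bigr)$. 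The point of replacing $u$ by $v$ here is that the iterated integral now involves the \emph{monotone} quantity $v$ rather than the raw $u$.

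Next I would introduce the key auxiliary function that collapses the double integral into a first-order problem:
\[
w(t)=v(t)+\int_0^tg(\tau)v(\tau)\rd\tau,\qquad w(0)=u_0.
\]
Because $\int_0^tg(\tau)v(\tau)\rd\tau\ge0$, one has $v(t)\le w(t)$, so the previous estimate becomes $v'(t)\le f(t)w(t)$. Differentiating $w$ and substituting gives $w'(t)=v'(t)+g(t)v(t)\le f(t)w(t)+g(t)w(t)=(f(t)+g(t))w(t)$, a genuine first-order linear differential inequality. The standard Gronwall argument — examining $\frac{\rd}{\rd t}\bigl(w(t)\exp(-\int_0^t(f+g)\rd\tau)\bigr)\le0$ — then produces $w(t)\le u_0\exp\bigl(\int_0^t(f(\tau)+g(\tau))\rd\tau\bigr)$.

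Finally I would feed this bound back into $v'(t)\le f(t)w(t)$ and integrate from $0$ to $t$, obtaining
\[
v(t)\le u_0+\int_0^tf(s)\,u_0\exp\left(\int_0^s(f(\tau)+g(\tau))\rd\tau\right)\rd s
=u_0\left(1+\int_0^tf(s)\exp\left(\int_0^s(f(\tau)+g(\tau))\rd\tau\right)\rd s\right),
\]
and the conclusion follows from $u(t)\le v(t)$. I do not expect a serious obstacle, as this is a standard iterated-Gronwall argument; the only genuinely delicate point is the \emph{choice} of $w$, namely recognizing that adjoining the memory term $\int_0^tgv$ to $v$ is exactly what merges the $fu$ contribution and the $g$-weighted history into the single combined rate $f+g$, after which the classical scalar Gronwall estimate applies verbatim. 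One should also be careful to invoke the monotonicity inequalities $u\le v\le w$ in the correct order and to use the sign hypotheses $f,g\ge0$ at each substitution.
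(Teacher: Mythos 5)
Your argument is correct. The paper itself gives no proof of this proposition (it is cited directly from the reference [P73]), so there is nothing to compare against; your reduction — majorizing $u$ by the right-hand side $v$, then absorbing the memory term into $w(t)=v(t)+\int_0^t g(\tau)v(\tau)\rd\tau$ so that $w'\le (f+g)w$, applying the classical Gronwall lemma to $w$, and integrating $v'\le fw$ back — is the standard derivation and every step goes through (the only implicit point, that $v\ge 0$ so that $v\le w$, follows since $u_0\ge u(0)\ge 0$ and all remaining terms in $v$ are nonnegative).
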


\section{Proof of Proposition~\ref{prop-discrete-characteristic}}\label{appendix-prop-discrete-characteristic}
\begin{proof}
The uniqueness and existence of a global solution follows from Picard-Lindel\"{o}f theorem \cite{T12}. Indeed, it is a corollary of Theorem~\ref{thm-characteristic-1} below. Let $X=[0,1]$ and $\{A_i^N\}_{1\le i\le N}$ be the uniform partition of $X$: $$A_i^N=\Bigl[\frac{i-1}{N},\frac{i}{N}\Bigr[,\quad \text{for}\ i=1,\ldots,N-1,\ \text{and}\ A_N^N=\Bigl[1-\frac{1}{N},1\Bigr].$$ 
Let
$$\omega(t,x)=\omega_i,\quad \phi(t,x)=\phi_i(t),\quad  \nu_t^x=\delta_{\phi_i(t)},\quad x\in A_i^N,\quad i=1,\ldots,N,\ t\ge0,$$ and $$\rd\eta_t^x(y)=W_{ij}(t)\rd y,\quad \rd\eta_0^x(y)=W_{ij}(0)\rd y,\quad  (x,y)\in A_i^N\times A_j^N,\quad i,j=1,\ldots,N.$$
Plugging the specific expressions above into the generalized {co-evolutionary} Kuramoto model \eqref{characteristic-Eq-1}-\eqref{characteristic-Eq-3}, we have

\begin{align*}
\dot{\phi}_i(t)=&\ \omega_i+\frac{1}{N}\sum_{j=1}^NW_{ij}(t)g(\phi_j(t)-\phi_i(t)),\\
\dot{W}_{ij}(t)=&-\varepsilon(W_{ij}+h(\phi_j-\phi_i)),
\end{align*}
i.e., \eqref{coevolution-a}-\eqref{coevolution-b}.
\end{proof}
\section{Proof of Proposition~\ref{prop-equivalent}}\label{appendix-prop-equivalent}
\begin{proof}
We show the conclusion in three steps.
\begin{enumerate}
  \item[Step~1.] Every solution to \eqref{characteristic-Eq-1}-\eqref{characteristic-Eq-3} is a solution to \eqref{Eq-variation-1}-\eqref{Eq-variation-2}. By Theorem~\ref{thm-characteristic-1}, let $(\phi,\eta)$ be the unique solution to \eqref{characteristic-Eq-1}-\eqref{characteristic-Eq-3}. Let $$\xi_t=-\varepsilon\int_{0}^t\e^{\varepsilon s}\int_{\T}h(\psi-\phi(s,x))\rd\nu_{s}^y(\psi)\rd y\rd s\in \mathcal{C}(X;\cM(X)),\quad \forall t\in\cI.$$ By Proposition~\ref{prop-derivative}, $\xi$ is differentiable and \begin{align*}
        \frac{\rd\xi_t}{\rd t}=&-\e^{\varepsilon t}\varepsilon\int_{\T}h(\psi-\phi(t,x))\rd\nu_t^y(\psi)\rd y,
      \end{align*} by the chain rule (using the definition of derivative). On the other hand, since $(\phi,\eta)$ is the solution to \eqref{characteristic-Eq-1}-\eqref{characteristic-Eq-3}, we have $\frac{\rd\xi_t}{\rd t}=\e^{\varepsilon t}\frac{\rd\eta_t}{\rd t}+\varepsilon \e^{\varepsilon t}\eta_t=\frac{\rd}{\rd t}(\e^{\varepsilon t}\eta_t)$. This shows
      \[\frac{\rd}{\rd t}(\e^{\varepsilon t}\eta_t)=-\e^{\varepsilon t}\varepsilon\int_{\T}h(\psi-\phi(t,x))\rd\nu_{t}^y(\psi)\rd y;\]
      integrating on both sides in $t$ yields \eqref{Eq-variation-2}. Then substituting \eqref{Eq-variation-2} into \eqref{characteristic-Eq-1} yields \eqref{Eq-variation-1}.
  \item[Step~2.] We use Gronwall inequality to show solutions of  \eqref{Eq-variation-1}-\eqref{Eq-variation-2} are unique. Let $(\varphi,\zeta)$ be another solution to \eqref{Eq-variation-1}-\eqref{Eq-variation-2}. It suffices to show $\varphi=\phi$, and then $\zeta=\eta$ follows from \eqref{Eq-variation-2}. Write \eqref{Eq-variation-1} in its integral form. For $0\le t$,
   \begin{align*}
        &|\phi(t,x)-\varphi(t,x)|\\
        \le& \int_{0}^t\e^{-\varepsilon s}\int_X\int_{\T}|g(\psi-\phi(s,x))-g(\psi-\varphi(s,x))|\rd\nu_s^y(\psi)\rd\eta_0^x(y)\rd s\\
&+\varepsilon\int_{0}^t\int_{0}^s\e^{-\varepsilon(t-\tau)}\int_X\left|\int_{\T}g(\psi-\phi(s,x))
\rd\nu_s^y(\psi)
\int_{\T}h(\psi-\phi(\tau,x))\rd\nu_{\tau}^y(\psi)\right.\\ &\left.-\int_{\T}g(\psi-\varphi(s,x))\rd\nu_s^y(\psi)
\int_{\T}h(\psi-\varphi(\tau,x))\rd\nu_{\tau}^y(\psi)\right|\rd y\rd\tau\rd s\\
\le&\ \Lip(g){\|\nu_{\cdot}\|_{\cI}}\|\eta_0\|\int_{0}^t\e^{-\varepsilon s}|\phi(s,x)-\varphi(s,x)|\rd s\\
&+\varepsilon\int_{0}^t\int_{0}^s\e^{-\varepsilon(t-\tau)}\int_X
\left(\left|\int_{\T}g(\psi-\phi(s,x))\rd\nu_s^y(\psi)
\int_{\T}h(\psi-\phi(\tau,x))\rd\nu_{\tau}^y(\psi)\right.\right.\\ &\left.\left.-\int_{\T}g(\psi-\varphi(s,x))\rd\nu_s^y(\psi)
\int_{\T}h(\psi-\phi(\tau,x))\rd\nu_{\tau}^y(\psi)\right|\right.\\ &\left.+\left|\int_{\T}g(\psi-\varphi(s,x))\rd\nu_s^y(\psi)
\int_{\T}h(\psi-\phi(\tau,x))\rd\nu_{\tau}^y(\psi)\right.\right.\\ &\left.\left.-\int_{\T}g(\psi-\varphi(s,x))\rd\nu_s^y(\psi)
\int_{\T}h(\psi-\varphi(\tau,x))\rd\nu_{\tau}^y(\psi)\right|\right)\rd y\rd\tau\rd s\\
\le&\ \Lip(g){\|\nu_{\cdot}\|_{\cI}}\|\eta_0\|\int_{0}^t\e^{-\varepsilon s}|\phi(s,x)-\varphi(s,x)|\rd s\\
&+\varepsilon\int_{0}^t\int_{0}^s\e^{-\varepsilon(t-\tau)}
\int_X\left(\int_{\T}\left|g(\psi-\phi(s,x))-g(\psi-\varphi(s,x))\right|\rd\nu_s^y(\psi)\right.\\ &\left.\cdot\int_{\T}|h(\psi-\phi(\tau,x))|\rd\nu_{\tau}^y(\psi)+\int_{\T}
|g(\psi-\varphi(s,x))|\rd\nu_s^y(\psi)\right.\\ &\left.
\cdot\int_{\T}\left|h(\psi-\phi(\tau,x))
-h(\psi-\varphi(\tau,x))\right|\rd\nu_{\tau}^y(\psi)\right)\rd y\rd\tau\rd s\\
\le&\ \Lip(g){\|\nu_{\cdot}\|_{\cI}}\|\eta_0\|\int_{0}^t\e^{-\varepsilon s}|\phi(s,x)-\varphi(s,x)|\rd s\\
&+\varepsilon({\|\nu_{\cdot}\|_{\cI}})^2\int_{0}^t\int_{0}^s\e^{-\varepsilon(t-\tau)}(\Lip(g)
\|h\|_{\infty}|\phi(s,x)-\varphi(s,x)|\\ &+\|g\|_{\infty}\Lip(h)|\phi(\tau,x)
-\varphi(\tau,x)|)\rd\tau\rd s\\
\le&\ \Lip(g){\|\nu_{\cdot}\|_{\cI}}\left(\|\eta_0\|+\|h\|_{\infty}{\|\nu_{\cdot}\|_{\cI}}\right)
\int_{0}^t|\phi(s,x)-\varphi(s,x)|\rd s\\ &+\varepsilon\|g\|_{\infty}\Lip(h)({\|\nu_{\cdot}\|_{\cI}})^2\int_{0}^t\int_{0}^s|\phi(\tau,x)
-\varphi(\tau,x)|\rd\tau\rd s.
      \end{align*}
      By Proposition~\ref{prop-Gronwall},
$$|\phi(t,x)-\varphi(t,x)|\le0\cdot\frac{C_1\e^{(C_1+C_2)t}+C_2}{C_1+C_2},$$
where \begin{equation}\label{constants}
    C_1=\Lip(g){\|\nu_{\cdot}\|_{\cI}}\left(\|h\|_{\infty}{\|\nu_{\cdot}\|_{\cI}}+\|\eta_0\|\right)\quad \text{and}\ C_2=\frac{\varepsilon\|g\|_{\infty}\Lip(h)({\|\nu_{\cdot}\|_{\cI}})^2}{C_1}.
\end{equation}This shows the uniqueness of solutions of \eqref{Eq-variation-1}-\eqref{Eq-variation-2}.
  \item[Step~3.] By Steps~1 and 2, and in the light of the uniqueness of solutions to  \eqref{characteristic-Eq-1}-\eqref{characteristic-Eq-3}, we show the solution to \eqref{characteristic-Eq-1}-\eqref{characteristic-Eq-3} and that to \eqref{Eq-variation-1}-\eqref{Eq-variation-2} coincide.
\end{enumerate}
\end{proof}

\section{Proof of Proposition~\ref{prop-semiflow}}\label{appendix-prop-semiflow}
\begin{proof}
 (i) is obvious since solutions to \eqref{characteristic-Eq-1}-\eqref{characteristic-Eq-3} are in ${\cC(\cI,\cC(X,\mathbb{T}\times\cM(X)))}$, by Theorem~\ref{thm-characteristic-1}.

From Corollary~\ref{cor-flow},
\begin{equation*}\label{Flow}
\begin{split}
\mathcal{S}_{t}^{x}[\eta_0,\nu_{\cdot},\omega](\phi_0)=&\ \Bigl(\phi_0(x)+\int_0^t\left(\omega(s,x)+\e^{-\varepsilon s}\int_X\int_{\T}g(\psi-\mathcal{S}_{s}^{x}[\eta_0,\nu_{\cdot},\omega](\phi_0))\rd\nu_s^y(\psi)
\rd\eta_0^x(y)\right.\\
&-\varepsilon\int_{0}^s\e^{-\varepsilon(t-\tau)}\int_X\left(\int_{\T}g(\psi-
\mathcal{S}_{s}^{x}[\eta_0,\nu_{\cdot},\omega](\phi_0))\rd\nu_s^y(\psi)\right.\\
&\left.\left.\int_{\T}h(\psi-\mathcal{S}_{\tau}^{x}[\eta_0,\nu_{\cdot},\omega](\phi_0))
\rd\nu_{\tau}^y(\psi)\right)\rd y\rd\tau\right)\rd s\Bigr)\mod1
\end{split}
\end{equation*}
\begin{enumerate}
\item[(ii)] Lipschitz continuity in $t$.
Let $t_1<t_2$.
\begin{align*}
&|\mathcal{S}_{t_1}^{x}[\eta_0,\nu_{\cdot},\omega](\phi_0)
-\mathcal{S}_{t_2}^{x}[\eta_0,\nu_{\cdot},\omega](\phi_0)|\\
\le&\left|\int_{t_1}^{t_2}\left(\omega(s,x)+\e^{-\varepsilon s}\int_X\int_{\T}g(\psi-\mathcal{S}_{s}^{x}[\eta_0,\nu_{\cdot},\omega](\phi_0))
\rd\nu_s^y(\psi)\rd\eta_0^x(y)\right)\rd s\right|\\
&+\varepsilon\int_{0}^{t_2}\int_0^s\left|\e^{-\varepsilon(t_2-\tau)}
-\e^{-\varepsilon(t_1-\tau)}\right|\int_X\left(\int_{\T}g(\psi-
\mathcal{S}_{s}^{x}[\eta_0,\nu_{\cdot},\omega](\phi_0))\rd\nu_s^y(\psi)\right.\\
&\ \left.\left.\int_{\T}h(\psi-\mathcal{S}_{\tau}^{x}[\eta_0,\nu_{\cdot},\omega](\phi_0))
\rd\nu_{\tau}^y(\psi)\right)\rd y\rd\tau\right)\rd s\\
&+\varepsilon\int_{t_1}^{t_2}\int_0^s\e^{-\varepsilon(t_1-\tau)}\int_X\left(\int_{\T}|g(\psi-
\mathcal{S}_{s}^{x}[\eta_0,\nu_{\cdot},\omega](\phi_0))|\rd\nu_s^y(\psi)\right.\\
&\ \left.\left.\int_{\T}|h(\psi-\mathcal{S}_{\tau}^{x}[\eta_0,\nu_{\cdot},\omega](\phi_0))|
\rd\nu_{\tau}^y(\psi)\right)\rd y\rd\tau\right)\rd s\\
\le&\ |t_2-t_1|\left(\max_{t_1\le s\le t_2}{|\omega(s,x)|}+\|g\|_{\infty}{\|\nu_{\cdot}\|_{\cI}}\|\eta_0\|\e^{-\varepsilon t_1}\right)\\
&+\varepsilon\|g\|_{\infty}\|h\|_{\infty}({\|\nu_{\cdot}\|_{\cI}})^2\left(\int_0^{t_2}\int_0^s|\e^{-\varepsilon(t_1-\tau)}
-\e^{-\varepsilon(t_2-\tau)}|\rd\tau\rd s
+\int_{t_1}^{t_2}\int_0^s\e^{-\varepsilon(t_1-\tau)}\rd\tau\rd s\right)\\
\le&\ |t_2-t_1|\left(\max_{t_1\le s\le t_2}{|\omega(s,x)|}+\|g\|_{\infty}{\|\nu_{\cdot}\|_{\cI}}\|\eta_0\|\e^{-\varepsilon t_1}+(\tfrac{1}{2}t_2^2\varepsilon^2+1)\|g\|_{\infty}\|h\|_{\infty}({\|\nu_{\cdot}\|_{\cI}})^2\right)\\
\le&\ L_1|t_2-t_1|,
\end{align*}
where we rest on the fact that $\e^{-x}+x$ is increasing in $x\in\R^+$ and $$L_1=L_1(\nu_{\cdot},\eta_0,\omega)=\max\limits_{s\in\cI}\|\omega(s,\cdot)\|_{\infty}+\|g\|_{\infty}{\|\nu_{\cdot}\|_{\cI}}\|\eta_0\|
+{(\tfrac{1}{2}T^2+\varepsilon^2)}\|g\|_{\infty}\|h\|_{\infty}({\|\nu_{\cdot}\|_{\cI}})^2$$
  \item[(iii)] Lipschitz continuous dependence on the initial conditions. \begin{align*}
    &|\mathcal{S}_{t}^{x}[\eta_0,\nu_{\cdot},\omega](\phi_0)
    -\mathcal{S}_{t}^{x}[\eta_0,\nu_{\cdot},\omega](\varphi_0)|\le|\phi_0(x)-\varphi_0(x)|
    \\
    &+\int_0^t\e^{-\varepsilon s}\int_X\int_{\T}|g(\psi-\mathcal{S}_{s}^{x}[\eta_0,\nu_{\cdot},\omega](\phi_0))
    -g(\psi-\mathcal{S}_{s}^{x}[\eta_0,\nu_{\cdot},\omega](\varphi_0))|
    \rd\nu_s^y(\psi)\rd\eta_0^x(y)\rd s\\
&+\varepsilon\int_0^t\int_{0}^s\e^{-\varepsilon(t-\tau)}\left.\int_X\left|\int_{\T}g(\psi-
\mathcal{S}_{s}^{x}[\eta_0,\nu_{\cdot},\omega](\phi_0))\rd\nu_s^y(\psi)
\int_{\T}h(\psi-\mathcal{S}_{\tau}^{x}[\eta_0,\nu_{\cdot},\omega](\phi_0))
\rd\nu_{\tau}^y(\psi)\right.\right.\\
&\left.-\int_{\T}g(\psi-
\mathcal{S}_{s}^{x}[\eta_0,\nu_{\cdot},\omega](\varphi_0))\rd\nu_s^y(\psi)
\int_{\T}h(\psi-\mathcal{S}_{\tau}^{x}[\eta_0,\nu_{\cdot},\omega](\varphi_0))
\rd\nu_{\tau}^y(\psi)\right|\rd y\rd\tau\rd s\\
\le&|\phi_0(x)-\varphi_0(x)|+\Lip(g){\|\nu_{\cdot}\|_{\cI}}\|\eta_0\|\int_0^t\e^{-\varepsilon s}|\mathcal{S}_{s}^{x}[\eta_0,\nu_{\cdot},\omega](\phi_0)-\mathcal{S}_{s}^{x}
[\eta_0,\nu_{\cdot},\omega](\varphi_0))|\rd s\\
    &+\varepsilon\int_0^t\int_{0}^s\e^{-\varepsilon(t-\tau)}\int_X\left(\int_{\T}|g(\psi-
\mathcal{S}_{s}^{x}[\eta_0,\nu_{\cdot},\omega](\phi_0))-g(\psi-
\mathcal{S}_{s}^{x}[\eta_0,\nu_{\cdot},\omega](\varphi_0))|\rd\nu_s^y(\psi)\right.\\
&\cdot\int_{\T}|h(\psi-\mathcal{S}_{\tau}^{x}[\eta_0,\nu_{\cdot},\omega](\varphi_0))|
\rd\nu_{\tau}^y(\psi)+\int_{\T}|g(\psi-
\mathcal{S}_{s}^{x}[\eta_0,\nu_{\cdot},\omega](\phi_0))|\rd\nu_s^y(\psi)\\
&\left.\cdot\int_{\T}|h(\psi-\mathcal{S}_{\tau}^{x}[\eta_0,\nu_{\cdot},\omega](\phi_0))
-h(\psi-\mathcal{S}_{\tau}^{x}[\eta_0,\nu_{\cdot},\omega](\varphi_0))|\rd\nu_{\tau}^y(\psi)
\right)\rd y\rd\tau\rd s\\
\le&\ |\phi_0(x)-\varphi_0(x)|+\Lip(g){\|\nu_{\cdot}\|_{\cI}}\|\eta_0\|\int_0^t\e^{-\varepsilon s}|\mathcal{S}_{s}^{x}[\eta_0,\nu_{\cdot},\omega](\phi_0)
-\mathcal{S}_{s}^{x}[\eta_0,\nu_{\cdot},\omega](\varphi_0))|\rd s\\
    &+\Lip(g)\|h\|_{\infty}({\|\nu_{\cdot}\|_{\cI}})^2\varepsilon\int_0^t\int_{0}^s\e^{-\varepsilon(t-\tau)}
    |\mathcal{S}_{s}^{x}[\eta_0,\nu_{\cdot},\omega](\phi_0)
    -\mathcal{S}_{s}^{x}[\eta_0,\nu_{\cdot},\omega](\varphi_0)|\rd\tau\rd s\\
    &+\|g\|_{\infty}\Lip(h)({\|\nu_{\cdot}\|_{\cI}})^2\varepsilon\int_0^t\int_{0}^s\e^{-\varepsilon(t-\tau)}
    |\mathcal{S}_{\tau}^{x}[\eta_0,\nu_{\cdot},\omega](\phi_0)-\mathcal{S}_{\tau}^{x}
    [\eta_0,\nu_{\cdot},\omega](\varphi_0)|\rd\tau\rd s\\
    \le&\ |\phi_0(x)-\varphi_0(x)|+\|g\|_{\infty}\Lip(h)({\|\nu_{\cdot}\|_{\cI}})^2\varepsilon\int_0^t\int_{0}^s
    |\mathcal{S}_{\tau}^{x}[\eta_0,\nu_{\cdot},\omega](\phi_0)
    -\mathcal{S}_{\tau}^{x}[\eta_0,\nu_{\cdot},\omega](\varphi_0)|\rd\tau\rd s\\
    &+\Lip(g){\|\nu_{\cdot}\|_{\cI}}(\|h\|_{\infty}\|\nu_{\cdot}\|+\|\eta_0\|)
    \int_0^t|\mathcal{S}_{s}^{x}[\eta_0,\nu_{\cdot},\omega](\phi_0)
    -\mathcal{S}_{s}^{x}[\eta_0,\nu_{\cdot},\omega](\varphi_0))|\rd s\\
  \le&\ |\phi_0(x)-\varphi_0(x)|\cdot\frac{C_1\e^{(C_1+C_2)t}+C_2}{C_1+C_2}\le \e^{L_2t}\|\phi_0-\varphi_0\|_{\infty},
  \end{align*}
  where {in the last inequality we applied Proposition~\ref{prop-Gronwall}, the Gronwall-Bellman inequality, with $C_1$ and $C_2$ given in \eqref{constants} and $L_2=C_1+C_2$.}
   \item[(iv)] Lipschitz continuous dependence in $\omega$. The proof is similar to that of (iii). 
  {For the reader's convenience, we provide a complete proof with detailed estimates.
   \begin{align*}
    &|\mathcal{S}_{t}^{x}[\eta_0,\nu_{\cdot},\omega](\phi_0)-\mathcal{S}_{t}^{x}
    [\eta_0,\nu_{\cdot},\widetilde{\omega}](\phi_0)|\\
    \le&\int_0^t|\omega(s,x)-\widetilde{\omega}(s,x)|\rd s\\
    &+\int_0^t\e^{-\varepsilon s}\int_X\int_{\T}
    |g(\psi-\mathcal{S}_{s}^{x}[\eta_0,\nu_{\cdot},\omega](\phi_0))
    -g(\psi-\mathcal{S}_{s}^{x}[\eta_0,\nu_{\cdot},\widetilde{\omega}](\phi_0))|
    \rd\nu_s^y(\psi)\rd\eta_0^x(y)\rd s\\
&\left.+\varepsilon\int_0^t\int_0^s\e^{-\varepsilon(t-\tau)}\int_X\left|\int_{\T}g(\psi-
\mathcal{S}_{s}^{x}[\eta_0,\nu_{\cdot},\omega](\phi_0))\rd\nu_s^y(\psi)
\int_{\T}h(\psi-\mathcal{S}_{\tau}^{x}[\eta_0,\nu_{\cdot},\omega](\phi_0))
\rd\nu_{\tau}^y(\psi)\right.\right.\\
&\left.-\int_{\T}g(\psi-
\mathcal{S}_{s}^{x}[\eta_0,\nu_{\cdot},\widetilde{\omega}](\phi_0))\rd\nu_s^y(\psi)
\int_{\T}h(\psi-\mathcal{S}_{\tau}^{x}[\eta_0,\nu_{\cdot},\widetilde{\omega}](\phi_0))
\rd\nu_{\tau}^y(\psi)\right|\rd y\rd\tau\rd s\\
\le&\ t\max\limits_{0\le s\le t}|\omega(s,x)-\widetilde{\omega}(s,x)|+\Lip(g){\|\nu_{\cdot}\|_{\cI}}\|\eta_0\|\int_0^t\e^{-\varepsilon s}|\mathcal{S}_{s}^{x}[\eta_0,\nu_{\cdot},\omega](\phi_0)-\mathcal{S}_{s}^{x}
[\eta_0,\nu_{\cdot},\widetilde{\omega}](\phi_0))|\rd s\\
    &+\varepsilon\int_0^t\int_{0}^s\e^{-\varepsilon(t-\tau)}\int_X\left(\int_{\T}|g(\psi-
\mathcal{S}_{s}^{x}[\eta_0,\nu_{\cdot},\omega](\phi_0))-g(\psi-
\mathcal{S}_{s}^{x}[\eta_0,\nu_{\cdot},\widetilde{\omega}](\phi_0))|\rd\nu_s^y(\psi)\right.\\
&\cdot\int_{\T}|h(\psi-\mathcal{S}_{\tau}^{x}[\eta_0,\nu_{\cdot},\widetilde{\omega}](\phi_0))|
\rd\nu_{\tau}^y(\psi)|+\int_{\T}|g(\psi-
\mathcal{S}_{s}^{x}[\eta_0,\nu_{\cdot},\omega](\phi_0))|\rd\nu_s^y(\psi)\\
&\left.\cdot\int_{\T}|h(\psi-\mathcal{S}_{\tau}^{x}[\eta_0,\nu_{\cdot},\omega](\phi_0))
-h(\psi-\mathcal{S}_{\tau}^{x}[\eta_0,\nu_{\cdot},\widetilde{\omega}](\phi_0))|\rd\nu_{\tau}^y(\psi)\right)\rd y\rd\tau\rd s\\
\le&\ t\max\limits_{0\le s\le t}|\omega(s,x)-\widetilde{\omega}(s,x)|+\Lip(g){\|\nu_{\cdot}\|_{\cI}}\|\eta_0\|\int_0^t\e^{-\varepsilon s}|\mathcal{S}_{s}^{x}[\eta_0,\nu_{\cdot},\omega](\phi_0)
-\mathcal{S}_{s}^{x}[\eta_0,\nu_{\cdot},\widetilde{\omega}](\phi_0))|\rd s\\
    &+\Lip(g)\|h\|_{\infty}({\|\nu_{\cdot}\|_{\cI}})^2\varepsilon\int_0^t\int_{0}^s\e^{-\varepsilon(t-\tau)}
    |\mathcal{S}_{s}^{x}[\eta_0,\nu_{\cdot},\omega](\phi_0)
    -\mathcal{S}_{s}^{x}[\eta_0,\nu_{\cdot},\widetilde{\omega}](\phi_0)|\rd\tau\rd s\\
     &+\|g\|_{\infty}\Lip(h)({\|\nu_{\cdot}\|_{\cI}})^2\varepsilon\int_0^t\int_{0}^s\e^{-\varepsilon(t-\tau)}
    |\mathcal{S}_{\tau}^{x}[\eta_0,\nu_{\cdot},\omega](\phi_0)-\mathcal{S}_{\tau}^{x}
    [\eta_0,\nu_{\cdot},\widetilde{\omega}](\phi_0)|\rd\tau\rd s\\
    \le&\ t\max\limits_{0\le s\le t}|\omega(s,x)-\widetilde{\omega}(s,x)|\\
    &+\|g\|_{\infty}\Lip(h)({\|\nu_{\cdot}\|_{\cI}})^2\varepsilon\int_0^t\int_{0}^s
    |\mathcal{S}_{\tau}^{x}[\eta_0,\nu_{\cdot},\omega](\phi_0)
    -\mathcal{S}_{\tau}^{x}[\eta_0,\nu_{\cdot},\widetilde{\omega}](\phi_0)|\rd\tau\rd s\\
    &+\Lip(g){\|\nu_{\cdot}\|_{\cI}}(\|h\|_{\infty}{\|\nu_{\cdot}\|_{\cI}}+\|\eta_0\|)
    \int_0^t|\mathcal{S}_{s}^{x}[\eta_0,\nu_{\cdot},\omega](\phi_0)
    -\mathcal{S}_{s}^{x}[\eta_0,\nu_{\cdot},\widetilde{\omega}](\phi_0))|\rd s\\
  \le&\max\limits_{0\le s\le t}|\omega(s,x)-\widetilde{\omega}(s,x)|t\frac{C_1\e^{(C_1+C_2)t}+C_2}{C_1+C_2}\le T\e^{L_2t}\max\limits_{s\in\cI}\|\omega-\widetilde{\omega}\|_{\infty},
\end{align*}
   where again in the  inequality before last we applied Proposition~\ref{prop-Gronwall}.}
   \item[(v)] Continuous dependence on the initial {SDGM} $\eta_0$. Let $$g_k(t,x,y)=\int_{\sT}
    g(\psi-\mathcal{S}_{t}^{x}[\eta_k,\nu_{\cdot},\omega](\phi_0))
    \rd\nu_t^y(\psi),\quad t\in\cI,\ x,y\in X.$$ Then
   \begin{align*}
    &|\mathcal{S}_{t}^{x}[\eta_0,\nu_{\cdot},\omega](\phi_0)
    -\mathcal{S}_{t}^{x}[\eta_k,\nu_{\cdot},\omega](\phi_0)|\\
    \le&\int_0^t\e^{-\varepsilon s}\left|\int_X\int_{\T}
    g(\psi-\mathcal{S}_{s}^{x}[\eta_k,\nu_{\cdot},\omega](\phi_0))
    \rd\nu_s^y(\psi)\rd(\eta_0^x(y)-\eta_k^x(y))\right|\rd s\\
    &+\int_0^t\e^{-\varepsilon s}\int_X\int_{\T}
    |g(\psi-\mathcal{S}_{s}^{x}[\eta_0,\nu_{\cdot},\omega](\phi_0))
    -g(\psi-\mathcal{S}_{s}^{x}[\eta_k,\nu_{\cdot},\omega](\phi_0))|
    \rd\nu_s^y(\psi)\rd\eta_0^x(y)\rd s\\
&\left.+\varepsilon\int_0^t\int_{0}^s\e^{-\varepsilon(t-\tau)}\int_X\left|\int_{\T}g(\psi-
\mathcal{S}_{s}^{x}[\eta_0,\nu_{\cdot},\omega](\phi_0))\rd\nu_s^y(\psi)
\int_{\T}h(\psi-\mathcal{S}_{\tau}^{x}[\eta_0,\nu_{\cdot},\omega](\phi_0))
\rd\nu_{\tau}^y(\psi)\right.\right.\\
&\left.-\int_{\T}g(\psi-
\mathcal{S}_{s}^{x}[\eta_k,\nu_{\cdot},\omega](\phi_0))\rd\nu_s^y(\psi)
\int_{\T}h(\psi-\mathcal{S}_{\tau}^{x}[\eta_k,\nu_{\cdot},\omega](\phi_0))
\rd\nu_{\tau}^y(\psi)\right|\rd y\rd\tau\rd s\\
\le&\int_0^t\e^{-\varepsilon s}\left|\int_Xg_k(s,x,y)\rd(\eta_0^x(y)-\eta_k^x(y))\right|\rd s\\
&+\Lip(g){\|\nu_{\cdot}\|_{\cI}}\|\eta_0\|\int_0^t\e^{-\varepsilon s}|\mathcal{S}_{s}^{x}[\eta_0,\nu_{\cdot},\omega](\phi_0)
-\mathcal{S}_{s}^{x}[\eta_k,\nu_{\cdot},\omega](\phi_0))|\rd s\\
    &+\varepsilon\int_0^t\int_{0}^s\e^{-\varepsilon(t-\tau)}
    \int_X\left(\int_{\T}|g(\psi-
\mathcal{S}_{s}^{x}[\eta_0,\nu_{\cdot},\omega](\phi_0))-g(\psi-
\mathcal{S}_{s}^{x}[\eta_k,\nu_{\cdot},\omega](\phi_0))|\rd\nu_s^y(\psi)\right.\\
&\left.\cdot\int_{\T}|h(\psi-\mathcal{S}_{\tau}^{x}[\eta_k,\nu_{\cdot},\omega](\phi_0))|
\rd\nu_{\tau}^y(\psi)|+\int_{\T}|g(\psi-\mathcal{S}_{s}^{x}[\eta_0,\nu_{\cdot},\omega](\phi_0))|
    \rd\nu_s^y(\psi)\right.\\
&\left.\cdot\int_{\T}|h(\psi-\mathcal{S}_{\tau}^{x}[\eta_0,\nu_{\cdot},\omega](\phi_0))
-h(\psi-\mathcal{S}_{\tau}^{x}[\eta_k,\nu_{\cdot},\omega](\phi_0))|\rd\nu_{\tau}^y(\psi)\right)\rd y\rd\tau\rd s\\
 \le&\int_0^t\e^{-\varepsilon s}\left|\int_X{g_k(s,x,y)}\rd(\eta_0^x(y)-\eta_k^x(y))\right|\rd s\\
 &+\Lip(g){\|\nu_{\cdot}\|_{\cI}}\|\eta_0\|\int_0^t\e^{-\varepsilon s}|\mathcal{S}_{s}^{x}[\eta_0,\nu_{\cdot},\omega](\phi_0)
 -\mathcal{S}_{s}^{x}[\eta_k,\nu_{\cdot},\omega](\phi_0))|\rd s\\
        &+\Lip(g)\|h\|_{\infty}(\|\nu\|^*)^2\varepsilon\int_0^t\int_{0}^s\e^{-\varepsilon(t-\tau)}
    |\mathcal{S}_{s}^{x}[\eta_0,\nu_{\cdot},\omega](\phi_0)
    -\mathcal{S}_{s}^{x}[\eta_k,\nu_{\cdot},\omega](\phi_0)|\rd\tau\rd s\\
    &+\|g\|_{\infty}\Lip(h)({\|\nu_{\cdot}\|_{\cI}})^2\varepsilon\int_0^t\int_{0}^s\e^{-\varepsilon(t-\tau)}
    |\mathcal{S}_{\tau}^{x}[\eta_0,\nu_{\cdot},\omega](\phi_0)
    -\mathcal{S}_{\tau}^{x}[\eta_k,\nu_{\cdot},\omega](\phi_0)|\rd\tau\rd s\\
    \le&\int_0^T\e^{-\varepsilon s}\left|\int_Xg_k(s,x,y)\rd(\eta_0^x(y)-\eta_k^x(y))\right|\rd s\\
    &+\Lip(g){\|\nu_{\cdot}\|_{\cI}}(\|h\|_{\infty}{\|\nu_{\cdot}\|_{\cI}}+\|\eta_0\|)
    \int_0^t|\mathcal{S}_{s}^{x}[\eta_0,\nu_{\cdot},\omega](\phi_0)
    -\mathcal{S}_{s}^{x}[\eta_k,\nu_{\cdot},\omega](\phi_0))|\rd s\\
    &+\|g\|_{\infty}\Lip(h)({\|\nu_{\cdot}\|_{\cI}})^2\varepsilon\int_0^t\int_{0}^s
    |\mathcal{S}_{\tau}^{x}[\eta_0,\nu_{\cdot},\omega](\phi_0)-\mathcal{S}_{\tau}^{x}
    [\eta_k,\nu_{\cdot},\omega](\phi_0)|\rd\tau\rd s\\
  \le&\frac{C_1\e^{(C_1+C_2)t}+C_2}{C_1+C_2}\int_0^T\e^{-\varepsilon s}\left|\int_Xg_k(s,x,y)\rd(\eta_0^x(y)-\eta_k^x(y))\right|\rd s\\
  \le&\ \e^{L_2t}\int_0^T\e^{-\varepsilon s}\left|\int_Xg_k(s,x,y)\rd(\eta_0^x(y)-\eta_k^x(y))\right|\rd s,
  \end{align*} where the second last inequality follows from Proposition~\ref{prop-Gronwall} again. Since $\nu_{\cdot}\in \mathcal{C}(\cI,\cC_{\infty})$ {by $\mathbf{(A6)'}$}, by Proposition~\ref{prop-nu}, $\mathbf{(A2)}$, and (ii), we have {$g_k(\cdot,x,\cdot)\in\mathcal{C}_{\sf b}(\cI\times X)$ for every $x\in X$}. Hence $\lim_{k\to\infty}{d_{\infty,\BL}}(\eta_0,\eta_k)=0$ implies \[\lim_{k\to\infty}\sup_{x\in X}\left|\int_X{g_k(s,x,y)}\rd(\eta_0^x(y)-\eta_k^x(y))\right|=0,\quad \text{for all}\ s\in\cI,\] by Proposition~\ref{prop-nu}(i). 
  Moreover, \[\sup_{s\in\cI}\sup_{x\in X}\left|\int_X{g_k(s,x,y)}\rd(\eta_0^x(y)-\eta_k^x(y))\right|\le\|g\|_{\infty}{\|\nu_{\cdot}\|_{\cI}}
    (\|\eta_0\|+\|\eta_k\|),\] by Dominated Convergence Theorem, \begin{equation*}\label{eq:strong-convergence-graph}
    \lim_{k\to\infty}\int_0^T\e^{-\varepsilon s}\sup_{x\in X}\left|\int_X{g_k(s,x,y)}\rd(\eta_0^x(y)-\eta_k^x(y))\right|\rd s=0,
    \end{equation*} and the conclusion follows from Fatou's lemma.
      \item[(vi)] Lipschitz continuous dependence on $\nu_{\cdot}$. Then 
  \begin{align*}
    &|\mathcal{S}_{t}^{x}[\eta_0,\nu_{\cdot},\omega](\phi_0)
    -\mathcal{S}_{t}^{x}[\eta_0,\upsilon_{\cdot},\omega](\phi_0)|\\
    \le&\int_0^t\e^{-\varepsilon s}\left|\int_X\int_{\T}
    g(\psi-\mathcal{S}_{s}^{x}[\eta_0,\upsilon_{\cdot},\omega](\phi_0))
    \rd(\nu_s^y(\psi)-\upsilon_s^y(\psi))\rd\eta_0^x(y)\right|\rd s\\
    &+\int_0^t\e^{-\varepsilon s}\int_X\int_{\T}
    |g(\psi-\mathcal{S}_{s}^{x}[\eta_0,\nu_{\cdot},\omega](\phi_0))
    -g(\psi-\mathcal{S}_{s}^{x}[\eta_0,\upsilon_{\cdot},\omega](\phi_0))|
    \rd\nu_s^y(\psi)\rd\eta_0^x(y)\rd s\\
&\left.+\varepsilon\int_0^t\int_{0}^s\e^{-\varepsilon(t-\tau)}\int_X\left|\int_{\T}g(\psi-
\mathcal{S}_{s}^{x}[\eta_0,\nu_{\cdot},\omega](\phi_0))\rd\nu_s^y(\psi)
\int_{\T}h(\psi-\mathcal{S}_{\tau}^{x}[\eta_0,\nu_{\cdot},\omega](\phi_0))
\rd\nu_{\tau}^y(\psi)\right.\right.\\
&\left.-\int_{\T}g(\psi-
\mathcal{S}_{s}^{x}[\eta_0,\upsilon_{\cdot},\omega](\phi_0))\rd\upsilon_s^y(\psi)
\int_{\T}h(\psi-\mathcal{S}_{\tau}^{x}[\eta_0,\upsilon_{\cdot},\omega](\phi_0))
\rd\upsilon_{\tau}^y(\psi)\right|\rd y\rd\tau\rd s\\
\le&\ \|\eta_0\|\BL(g)\int_0^t\e^{-\varepsilon s}d_{\infty,\BL}(\nu_s,\upsilon_s)\rd s\\
&+\Lip(g){\|\nu_{\cdot}\|_{\cI}}\|\eta_0\|\int_0^t\e^{-\varepsilon s}|\mathcal{S}_{s}^{x}[\eta_0,\nu_{\cdot},\omega](\phi_0)
-\mathcal{S}_{s}^{x}[\eta_0,\upsilon_{\cdot},\omega](\phi_0))|\rd s\\
    &+\varepsilon\int_0^t\int_{0}^s\e^{-\varepsilon(t-\tau)}
    \int_X\left[\int_{\T}|g(\psi-\mathcal{S}_{s}^{x}[\eta_0,\nu_{\cdot},\omega](\phi_0))|
    \rd\nu_s^y(\psi)\right.\\
&\cdot\left(\int_{\T}|h(\psi-\mathcal{S}_{\tau}^{x}[\eta_0,\nu_{\cdot},\omega](\phi_0))
-h(\psi-\mathcal{S}_{\tau}^{x}[\eta_0,\upsilon_{\cdot},\omega](\phi_0))|\rd\nu_{\tau}^y(\psi)\right.\\
&\left.+\left|\int_{\T}h(\psi-\mathcal{S}_{\tau}^{x}[\eta_0,\upsilon_{\cdot},\omega](\phi_0))
\rd(\nu_{\tau}^y(\psi)-\upsilon_{\tau}^y(\psi))\right|\right)\\
&+\left(\int_{\T}|g(\psi-
\mathcal{S}_{s}^{x}[\eta_0,\nu_{\cdot},\omega](\phi_0))\rd\nu_s^y(\psi)-g(\psi-
\mathcal{S}_{s}^{x}[\eta_0,\upsilon_{\cdot},\omega](\phi_0))|\rd\nu_s^y(\psi)\right.\\
&\left.+\left|\int_{\T}g(\psi-\mathcal{S}_{\tau}^{x}[\eta_0,\upsilon_{\cdot},\omega](\phi_0))
\rd(\nu_{s}^y(\psi)-\upsilon_{s}^y(\psi))\right|
\right)\\
&\left.\cdot\int_{\T}|h(\psi-\mathcal{S}_{\tau}^{x}[\eta_0,\upsilon_{\cdot},\omega](\phi_0))|
\rd\upsilon_{\tau}^y(\psi)|\right]\rd y\rd\tau\rd s\\
\le&\ \|\eta_0\|\BL(g)\int_0^t\e^{-\varepsilon s}d_{\infty,\BL}(\nu_{s},
    \upsilon_{s})\rd s\\
&+\Lip(g){\|\nu_{\cdot}\|_{\cI}}\|\eta_0\|\int_0^t\e^{-\varepsilon s}
|\mathcal{S}_{s}^{x}[\eta_0,\nu_{\cdot},\omega](\phi_0)
-\mathcal{S}_{s}^{x}[\eta_0,\upsilon_{\cdot},\omega](\phi_0))|\rd s\\
    &+\|g\|_{\infty}\Lip(h)({\|\nu_{\cdot}\|_{\cI}})^2\varepsilon\int_0^t\int_{0}^s\e^{-\varepsilon(t-\tau)}
    |\mathcal{S}_{\tau}^{x}[\eta_0,\nu_{\cdot},\omega](\phi_0)
    -\mathcal{S}_{\tau}^{x}[\eta_0,\upsilon_{\cdot},\omega](\phi_0)|\rd\tau\rd s\\
    &+\|g\|_{\infty}\BL(h){\|\nu_{\cdot}\|_{\cI}}\varepsilon\int_0^t\int_{0}^s
    \e^{-\varepsilon(t-\tau)}d_{\infty,\BL}(\nu_{\tau},
    \upsilon_{\tau})\rd\tau\rd s\\
    &+\Lip(g)\|h\|_{\infty}{\|\nu_{\cdot}\|_{\cI}}\|\upsilon_{\cdot}\|_{\cI}\varepsilon\int_0^t\int_{0}^s
    \e^{-\varepsilon(t-\tau)}|\mathcal{S}_{s}^{x}[\eta_0,\nu_{\cdot},\omega](\phi_0)
    -\mathcal{S}_{s}^{x}[\eta_0,\upsilon_{\cdot},\omega](\phi_0)|\rd\tau\rd s\\
 &+\BL(g)\|h\|_{\infty}\|\upsilon_{\cdot}\|_{\cI}\varepsilon\int_0^t\int_{0}^s\e^{-\varepsilon(t-\tau)}
 d_{\infty,\BL}(\nu_{s},
    \upsilon_{s})\rd\tau\rd s\\
\le&\ \BL(g)(\|h\|_{\infty}\|\upsilon_{\cdot}\|_{\cI}+\|\eta_0\|)\int_0^t d_{\infty,\BL}(\nu_{s},
    \upsilon_{s})\rd s\\
    &+\|g\|_{\infty}\BL(h){\|\nu_{\cdot}\|_{\cI}}\varepsilon\int_0^t\int_{0}^s d_{\infty,\BL}(\nu_{\tau},
    \upsilon_{\tau})\rd\tau\rd s\\
&+\Lip(g){\|\nu_{\cdot}\|_{\cI}}(\|h\|_{\infty}\|\upsilon_{\cdot}\|_{\cI}+\|\eta_0\|)\int_0^t
|\mathcal{S}_{s}^{x}[\eta_0,\nu_{\cdot},\omega](\phi_0)
-\mathcal{S}_{s}^{x}[\eta_0,\upsilon_{\cdot},\omega](\phi_0))|\rd s
\\
 &+\|g\|_{\infty}\Lip(h)({\|\nu_{\cdot}\|_{\cI}})^2\varepsilon\int_0^t\int_{0}^s
    |\mathcal{S}_{\tau}^{x}[\eta_0,\nu_{\cdot},\omega](\phi_0)
    -\mathcal{S}_{\tau}^{x}[\eta_0,\upsilon_{\cdot},\omega](\phi_0)|\rd\tau\rd s\\
\le&\ \Bigl(\BL(g)(\|h\|_{\infty}\|\upsilon_{\cdot}\|_{\cI}+\|\eta_0\|)+\|g\|_{\infty}
\BL(h){\|\nu_{\cdot}\|_{\cI}}\varepsilon T\Bigr)\int_0^t  d_{\infty,\BL}(\nu_s,\upsilon_s)\rd s\\
    &+{\|\nu_{\cdot}\|_{\cI}}\Bigl(\Lip(g)(\|h\|_{\infty}\|\upsilon_{\cdot}\|_{\cI}+\|\eta_0\|)
    +\|g\|_{\infty}\Lip(h){\|\nu_{\cdot}\|_{\cI}}\varepsilon T\Bigr)\\
    &\cdot\int_0^t
|\mathcal{S}_{s}^{x}[\eta_0,\nu_{\cdot},\omega](\phi_0)
-\mathcal{S}_{s}^{x}[\eta_0,\upsilon_{\cdot},\omega](\phi_0))|\rd s\\
    \le&\ L_3\e^{L_4t}\int_0^t d_{\infty,\BL}(\nu_{s},\upsilon_{s})\rd s,
  \end{align*}
  where again the last inequality is a consequence of the Gronwall inequality with $L_3=L_3(\nu_{\cdot},\upsilon_{\cdot})=\BL(g)(\|h\|_{\infty}\|\upsilon_{\cdot}\|_{\cI}+\|\eta_0\|)
  +\|g\|_{\infty}\BL(h){\|\nu_{\cdot}\|_{\cI}}\varepsilon T$ and $L_4=L_4(\nu_{\cdot},\upsilon_{\cdot})={\|\nu_{\cdot}\|_{\cI}}\Bigl(\Lip(g)(\|h\|_{\infty}\|\upsilon_{\cdot}\|_{\cI}
  +\|\eta_0\|)+\|g\|_{\infty}\Lip(h){\|\nu_{\cdot}\|_{\cI}}\varepsilon T\Bigr)$.
  \end{enumerate}
\end{proof}

\section{Proof of Proposition~\ref{prop-continuousdependence}}\label{appendix-prop-continuousdependence}
\begin{proof}
We will suppress some of the variables of $\cS_{t}[\eta_0,\nu_{\cdot},\omega]$ in the bracket whenever it is clear and deemphasized from the context. Recall the integro-differential equation for $\mathcal{S}_{t}^{x}[\eta_0,\nu_{\cdot},\omega](\phi)$:
\begin{equation*}\label{Recall-IDE}
\begin{split}
\mathcal{S}_{t}^{x}[\eta_0,\nu_{\cdot},\omega](\phi_0)
=&\ \Bigl(\phi(x)+\int_0^t\left(\omega(x)+\e^{-\varepsilon s}\int_X\int_Yg(\psi-\mathcal{S}_{s}^{x}[\eta_0,\nu_{\cdot},\omega](\phi_0))
\rd\nu_s^y(\psi)\rd\eta_0^x(y)\right.\\
&-\varepsilon\int_{0}^s\e^{-\varepsilon(t-\tau)}\int_X\left(\int_Yg(\psi-
\mathcal{S}_{s}^{x}[\eta_0,\nu_{\cdot},\omega](\phi_0))\rd\nu_s^y(\psi)\right.\\
&\left.\left.\int_Yh(\psi-\mathcal{S}_{\tau}^{x}[\eta_0,\nu_{\cdot},\omega](\phi_0))
\rd\nu_{\tau}^y(\psi)\right)\rd y\rd\tau\right)\rd s\Bigr)\mod1
\end{split}
\end{equation*}

\begin{enumerate}
 \item[(i)]
{
Note that for every $\nu_{\cdot}\in\cC(\cI,\cB_{\infty})$, we have $\|\nu_0\|<\infty$, and $\bT\subseteq(\mathcal{S}^x_{t}[\eta_0,\nu_{\cdot},\omega])^{-1}\bT$, 
 which implies the mass conservation law:
 \begin{equation*}\label{eq-conservation}
 (\cF[\eta_0,h]\nu_{t})^x(\bT)=\nu_0^x((\mathcal{S}^x_{t}[\eta_0,\nu_{\cdot},\omega])^{-1}\bT)=\nu_0^x(\bT),\quad t\in\cI,\quad x\in X.
 \end{equation*}
 This further implies that 
 $\cF[\eta_0,\omega]\nu_t\in\cB_{\infty}$.
}

Assume $\nu_{\cdot}\in\cC(\cI,\cB_{\infty})$. Next, we will show the Lipschitz continuity of $\cF[\eta_0,\nu_{\cdot},\omega]\nu_{t}$ in $t$. 
  Indeed, from Proposition~\ref{prop-semiflow}(ii),
\begin{align*}
  &d_{\infty,\BL}(\cF[\eta_0,\nu_{\cdot},\omega]\nu_{t},\cF[\eta_0,\nu_{\cdot},\omega]\nu_{t'})\\
  =&\sup_{x\in X}d_{\sf BL}(\nu_0^x\circ (\cS^x_{t}[\eta_0,\nu_{\cdot},\omega])^{-1},\nu_0^x\circ (\cS^x_{t'}[\eta_0,\nu_{\cdot},\omega])^{-1})\\
  =&\sup_{x\in X}\sup_{f\in\mathcal{BL}_1(\bT)}\Bigl|\int_{\bT}f(\phi)\rd(\nu_0^x\circ (\cS^x_{t}[\eta_0,\nu_{\cdot},\omega])^{-1}(\phi)-\nu_0^x\circ (\cS^x_{t'}[\eta_0,\nu_{\cdot},\omega])^{-1}(\phi))\Bigr|\\
  =&\sup_{x\in X}\sup_{f\in\mathcal{BL}_1(\bT)}\Bigl|\int_{\psi\in\cup_{\phi\in\sT}
  \cS^x_{t}[\eta_0,\nu_{\cdot},\omega]^{-1}(\phi)}f\circ(\cS^x_{t}[\eta_0,\nu_{\cdot},\omega])\psi
  \rd\nu_0^x(\psi)\\&\hspace{2cm}
  -\int_{\psi\in\cup_{\phi\in\sT}\cS^x_{t'}[\eta_0,\nu_{\cdot},\omega]^{-1}(\phi)}
  f\circ(\cS^x_{t'}[\eta_0,\nu_{\cdot},\omega])\psi\rd\nu_0^x(\psi)\Bigr|\\
  =&\sup_{x\in X}\sup_{f\in\mathcal{BL}_1(\bT)}\lt|\int_{\bT}\lt(f\circ (\cS^x_{t}[\eta_0,\nu_{\cdot},\omega])\psi-f\circ (\cS^x_{t'}[\eta_0,\nu_{\cdot},\omega])\psi\rt)\rd\nu_0^x(\psi)\rt|\\
  \le&\sup_{x\in X}\int_{\mathbb{T}}\lt|\cS^x_{t}[\eta_0,\nu_{\cdot},\omega]\psi
  -\mathcal{S}^x_{t'}[\eta_0,\nu_{\cdot},\omega]\psi\rt|
  \rd\nu_0^x(\psi)\\
  \le&\ L_1(\nu_{\cdot}){\|\nu_0\|}|t-t'|\to0,\end{align*}
as $|t-t'|\to0$. This shows that $t\mapsto\cF[\eta_0,\omega]\nu_{t}\in \mathcal{C}(\cI,{\mathcal{B}_{\infty}})$ is Lipschitz continuous. 
{Now additionally assume $\nu_0\in\cC_{\infty}$, 
which immediately implies
\begin{equation}
  \label{continuity_x}
  \lim_{{|x-y|}\to0}d_{\BL}(\nu_0^x,\nu_0^y)=0.
\end{equation}
We will show $\cF[\eta_0,\omega]\nu_{t}\in\cC_{\infty}$ for all $t$, which immediately yields $\cF[\eta_0,\omega]\nu_{\cdot}\in\cC(\cI,\cC_{{\infty}})$.} 

{Recall from Proposition~\ref{prop-semiflow}(iii) that \begin{equation*}\label{Lip-phi}|\cS_{t}^x[\eta_0,\nu_{\cdot},\omega]\phi_1(x)
    -\cS_{t}^x[\eta_0,\nu_{\cdot},\omega]\phi_2(x)|\le \e^{L_2(\nu_{\cdot})t}\|\phi_1-\phi_2\|_{\infty},\end{equation*} where the finite constant $L_2(\nu_{\cdot})$ is given in Proposition~\ref{prop-semiflow}(iii).  Hence, for every $f\in\mathcal{BL}_1(\mathbb{T})$,
\[\Lip(f\circ \cS^x_{t}[\eta_0,\nu_{\cdot},\omega])\le \Lip(f)\Lip(\cS^x_{t}[\eta_0,\nu_{\cdot},\omega])
\le\Lip(f)\e^{L_2(\nu_{\cdot})t},\ \|f\circ \cS^x_{t}[\eta_0,\nu_{\cdot},\omega]\|_{\infty}\le\|f\|_{\infty}.\]
This shows $\BL(f\circ \cS^x_{t}[\eta_0,\nu_{\cdot},\omega])\le \e^{L_2(\nu_{\cdot})t}$.}

{Similarly,  \begin{align*}
  &d_{\BL}((\cF[\eta_0,\omega]\nu_{t})^x,(\cF[\eta_0,\omega]\nu_{t})^y)\\
  =&\ d_{\sf BL}(\nu_0^x\circ (\cS^x_{t}[\eta_0,\nu_{\cdot},\omega])^{-1},\nu_0^y\circ (\cS^y_{t}[\eta_0,\nu_{\cdot},\omega])^{-1})\\
  \le&\ d_{\sf BL}(\nu_0^x\circ (\cS^x_{t}[\eta_0,\nu_{\cdot},\omega])^{-1},\nu_0^y\circ (\cS^x_{t}[\eta_0,\nu_{\cdot},\omega])^{-1})\\
  &+d_{\sf BL}(\nu_0^y\circ (\cS^x_{t}[\eta_0,\nu_{\cdot},\omega])^{-1},\nu_0^y\circ (\cS^y_{t}[\eta_0,\nu_{\cdot},\omega])^{-1})\\
  \le&\sup_{f\in\mathcal{BL}_1(\bT)}\int_{\bT} f\circ (\cS^x_{t}[\eta_0,\nu_{\cdot},\omega])\psi\rd(\nu_0^x(\psi)-\nu_0^y(\psi))\\
  &+\sup_{f\in\mathcal{BL}_1(\bT)}\lt|\int_{\bT}\lt(f\circ (\cS^x_{t}[\eta_0,\nu_{\cdot},\omega])\psi-f\circ (\cS^y_{t}[\eta_0,\nu_{\cdot},\omega])\psi\rt)\rd\nu_0^y(\psi)\rt|\\
  \le&\ \e^{L_2(\nu_{\cdot}) t}d_{\BL}(\nu_0^x,\nu_0^y)+\|\nu_0\|\sup_{\psi\in\sT}
  |\cS^x_{t}[\eta_0,\nu_{\cdot},\omega](\psi)-\cS^y_{t}[\eta_0,\nu_{\cdot},\omega](\psi)|.\end{align*}
Hence it follows from \eqref{continuity_x}, $\nu_0\in\cC_{\infty}$, and Proposition~\ref{prop-semiflow}(i) that
\[\lim_{{|x-y|}\to0}d_{\infty,\BL}((\cF[\eta_0,\omega]\nu_{t})^x,(\cF[\eta_0,\omega]\nu_{t})^y)=0.\]
This shows $\cF[\eta_0,\omega]\nu_{t}\in\cC_{\infty}$.}
\item[(ii)] Lipschitz continuity in $\omega$. First recall from Proposition~\ref{prop-semiflow}(iv) that \begin{equation*}\label{Lip-omega}|\cS_{t}^x[\omega]\phi(x)-\cS_{t}^x[\widetilde{\omega}]\phi(x)|\le T\e^{L_2t}\|\omega-\widetilde{\omega}\|_{\infty,\cI}.\end{equation*}
 Now we show $\cS^x_{t}[\omega]$ is Lipschitz continuous in $\omega$. Note that
\begin{alignat}{2}
\nonumber &d_{\sf BL}(\nu_0^x\circ (\cS^x_{t}[\omega])^{-1},\nu_0^x\circ (\cS^x_{t}[\widetilde{\omega}])^{-1})\\
\nonumber =&\sup_{f\in\mathcal{BL}_1(\mathbb{T})}\int_{\mathbb{T}} f(\phi)\mathrm{d}((\nu_0^x\circ (\cS^x_{t}[\omega])^{-1})(\psi)-(\nu_0^x\circ (\cS^x_{t}[\widetilde{\omega}])^{-1})(\psi))\\
\nonumber =&\sup_{f\in\mathcal{BL}_1(\mathbb{T})}\int_{\mathbb{T}} ((f\circ \cS^x_{t}[\omega])(\psi)-(f\circ \cS^x_{t}[\widetilde{\omega}])(\psi))\mathrm{d}\nu_0^x(\psi)\\
\nonumber \le&\int_{\mathbb{T}}\lt|\cS^x_{t}[\omega](\psi)- \cS^x_{t}[\widetilde{\omega}](\psi)\rt|\mathrm{d}\nu_0^x(\psi)\\
\label{term-1}\le&\ T\e^{L_2(\nu_{\cdot})t}{\|\nu_{\cdot}\|_{\cI}}\max\limits_{s\in\cI}\|\omega
-\widetilde{\omega}\|_{\infty,\cI}
\end{alignat}
{where the last inequality follows from Proposition~\ref{prop-semiflow}(iv) with $L_2(\nu_{\cdot})$ given in Proposition~\ref{prop-semiflow}(iii).}  
This implies \begin{equation*}
        d_{\infty,\BL}(\mathcal{F}[\eta_0,\omega]\nu_{t},
        \mathcal{F}[\eta_0,\widetilde{\omega}]\nu_{t})\le {T\|\nu_{\cdot}\|_{\cI}\e^{L_2(\nu_{\cdot})t}}\|\omega-\widetilde{\omega}\|_{\infty,\cI}.
\end{equation*}
\item[(iii)] Lipschitz continuity in $\eta_0$. 
{Note that}
\begin{alignat}{2}
\nonumber &d_{\sf BL}(\nu_0^x\circ (\cS^x_{t}[\eta_0])^{-1},\nu_0^x\circ (\cS^x_{t}[\eta_k])^{-1})\\
\nonumber =&\sup_{f\in\mathcal{BL}_1(\mathbb{T})}\int_{\mathbb{T}} f(\psi)\mathrm{d}((\nu_0^x\circ (\cS^x_{t}[\eta_0])^{-1})(\psi)-(\nu_0^x\circ (\cS^x_{t}[\eta_k])^{-1})(\psi))\\
\nonumber =&\sup_{f\in\mathcal{BL}_1(\mathbb{T})}\int_{\mathbb{T}} ((f\circ \cS^x_{t}[\eta_0])(\psi)-(f\circ \cS^x_{t}[\eta_k])(\psi))\mathrm{d}\nu_0^x(\psi)\\
\nonumber \le&\int_{\mathbb{T}}\lt|\cS^x_{t}[\eta_0](\psi)- \cS^x_{t}[\eta_k](\psi)\rt|\mathrm{d}\nu_0^x(\psi).
\end{alignat}
Define $\widehat{\nu}_0\in\cM_+(\sT)$:
\[\widehat{\nu}_0(B)\coloneqq\sup_{x\in X}\nu_0^x(B),\quad \forall B\in\mathfrak{B}(\sT).\]Obviously, {$\|\widehat{\nu}_0\|_{\TV}\le\|\nu_0\|$.} 
Then
\begin{align*}
  d_{\infty,\BL}(\cF[\eta_0,\omega]\nu_t,\cF[\eta_k,\omega]\nu_t)
  \le&\sup_{x\in X}\int_{\mathbb{T}}\lt|\cS^x_{t}[\eta_0](\psi)- \cS^x_{t}[\eta_k](\psi)\rt|\mathrm{d}\nu_0^x(\psi)\\
  \le&\int_{\mathbb{T}}\sup_{x\in X}\lt|\cS^x_{t}[\eta_0](\psi)- \cS^x_{t}[\eta_k](\psi)\rt|\mathrm{d}\widehat{\nu}_0(\psi).
\end{align*}
Note that \[\lt|\cS^x_{t}[\eta_0](\psi)- \cS^x_{t}[\eta_k](\psi)\rt|\le1,\quad \forall x\in X,\quad \psi\in\sT.\]By Dominated Convergence Theorem, it follows from {$\mathbf{(A6)'}$ and} Proposition~\ref{prop-semiflow}(v) that
\begin{align*}
&d_{\infty,\BL}(\cF[\eta_0,\omega]\nu_t,\cF[\eta_k,\omega]\nu_t)\to0,\quad \text{as}\quad k\to\infty.
\end{align*}

\item[(iv)] Lipschitz continuity in $\nu_{\cdot}$.
Next, we show $\cS^x_{t}[\nu_{\cdot}]$ is Lipschitz continuous in $\nu_{\cdot}$. Observe that
\begin{align}
\nonumber&d_{\sf BL}(\nu_0^x\circ (\cS_{t}^x[\nu_{\cdot}])^{-1},\upsilon_0^{x}\circ (\cS^x_{t}[\upsilon_{\cdot}])^{-1})\\
\label{sum}\le&\ d_{\sf BL}(\nu_0^{x}\circ (\cS_{t}^x[\nu_{\cdot}])^{-1},\nu_0^{x}\circ (\cS^x_{t}[\upsilon_{\cdot}])^{-1})+d_{\sf BL}(\nu_0^{x}\circ (\cS^x_{t}[\upsilon_{\cdot}])^{-1},\upsilon_0^{x}\circ (\cS^x_{t}[\upsilon_{\cdot}])^{-1}).
\end{align}
We estimate the two terms separately. {Note that}
\begin{alignat}{2}
\nonumber &d_{\sf BL}(\nu_0^{x}\circ (\cS^x_{t}[\nu_{\cdot}])^{-1},\nu_0^{x}\circ (\cS^x_{t}[\upsilon_{\cdot}])^{-1})\\
\nonumber =&\sup_{f\in\mathcal{BL}_1(\mathbb{T})}\int_{\mathbb{T}} f(\phi)\mathrm{d}((\nu_0^{x}\circ (\cS^x_{t}[\nu_{\cdot}])^{-1})(\phi)-(\nu_0^{x}\circ (\cS^x_{t}[\upsilon_{\cdot}])^{-1})(\phi))\\
\nonumber =&\sup_{f\in\mathcal{BL}_1(\mathbb{T})}\int_{\mathbb{T}} ((f\circ \cS^x_{t}[\nu_{\cdot}])(\phi)-(f\circ \cS^x_{t}[\upsilon_{\cdot}])(\phi))\mathrm{d}\nu_0^{x}(\phi)\\
\nonumber \le&\int_{\mathbb{T}}\lt|\cS^x_{t}[\nu_{\cdot}](\phi)- \cS^x_{t}[\upsilon_{\cdot}](\phi)\rt|\mathrm{d}\nu_0^{x}(\phi)\\
\label{term-2}\le&\ L_3(\nu_{\cdot},\upsilon_{\cdot}){\|\nu_{\cdot}\|_{\cI}}\e^{{L_4(\nu_{\cdot},\upsilon_{\cdot})} t}\int_0^t {d_{\infty,\BL}}(\nu_s,\upsilon_s)\rd s,
\end{alignat}
{where the last inequality follows from Proposition~\ref{prop-semiflow}(vi) with 
$L_3,\ L_4$ given in Proposition~\ref{prop-semiflow}(vi).} 
 From the proof of (i), for every $f\in\mathcal{BL}_1(\sT)$, we have {$\BL(f\circ \cS^x_{t}[\upsilon_{\cdot}])\le \e^{L_2(\upsilon_{\cdot})t}$.} For every $x\in X$,
\begin{alignat}{2}
 \nonumber &d_{\sf BL}(\nu_0^{x}\circ (\cS^x_{t}[\upsilon_{\cdot}])^{-1},\upsilon_0^{x}\circ (\cS^x_{t}[\upsilon_{\cdot}])^{-1})\\
  \nonumber=&\sup_{f\in\mathcal{BL}_1(\mathbb{T})}\int_{\mathbb{T}}(f\circ \cS^x_{t}[\upsilon_{\cdot}])(\phi)\rd(\nu_0^{x}(\phi)-\upsilon_0^{x}(\phi))\\
 \label{term-3} \le&\ e^{{L_2(\upsilon_{\cdot})}t}d_{\sf BL}(\nu_0^{x},\upsilon_0^{x})\le \e^{{L_2(\upsilon_{\cdot})}t}d_{\infty,\BL}(\nu_0,\upsilon_0).
\end{alignat}
Combining \eqref{term-2} and \eqref{term-3}, it follows from \eqref{sum} that
\begin{align}
\nonumber&d_{\infty,\BL}(\cF[\eta_0,\omega]\nu_{t},\cF[\eta_0,\omega]\upsilon_{t})\\
\nonumber =& \sup_{x\in X}d_{\sf BL}(\nu_0^{x}\circ{(\cS^x_{t}[\nu_{\cdot}])^{-1}},\upsilon_0^{x}\circ {(\cS^x_{t}[\upsilon_{\cdot}])^{-1}})\\
 \nonumber\le&\ \e^{{L_2(\upsilon_{\cdot})}t}d_{\infty,\BL}(\nu_0,\upsilon_0)+ {L_3(\nu_{\cdot},\upsilon_{\cdot}){\|\nu_{\cdot}\|_{\cI}}\e^{L_4(\nu_{\cdot},\upsilon_{\cdot}) t}}\int_0^t d_{\infty,\BL}(\nu_s,\upsilon_s) \rd s.
\end{align}
\end{enumerate}
\end{proof}

\section{Proof of Proposition~\ref{prop-sol-fixedpoint}}\label{appendix-prop-sol-fixedpoint}
\begin{proof}
\begin{enumerate}
  \item[(i)] 
      The proof is analogous to that of \cite[{Proposition~3.5}]{KX21}, by applying Gronwall inequality to the inequality in Proposition~\ref{prop-continuousdependence}(iv).
  \item[(ii)] Continuous dependence of solutions of \eqref{Fixed} on $\omega$.

Let $\nu^i_{\cdot}\in \mathcal{C}(\mathcal{I},\cB_{\infty})$ for $i=1,2$ be the solutions to the generalized VE \eqref{Fixed} with $\omega$ replaced by $\omega_i$, with the same initial condition $\nu^1_0=\nu^2_0$. We denote $\nu_0^{i,x}$ for $(\nu^i_0)^x$.
    \begin{align*}
      &d_{\sf BL}(\nu_0^{1,x}\circ {(\cS^x_{t}[\nu^1_{\cdot},\omega_1])^{-1}},
\nu_0^{1,x}\circ {(\cS^x_{t}[\nu^2_{\cdot},\omega_2])^{-1}})\\
      \le&\ d_{\sf BL}({\nu_0^{1,x}\circ(\cS^x_{t}[\nu^1_{\cdot},\omega_1])^{-1}},
{\nu_0^{1,x}\circ(\cS^x_{t}[\nu^1_{\cdot},\omega_2])^{-1}})\\
&+d_{\sf BL}({\nu_0^{1,x}\circ (\cS^x_{t}[\nu^1_{\cdot},\omega_2])^{-1}}, {\nu_0^{1,x}\circ(\cS^x_{t}[\nu^2_{\cdot},\omega_2])^{-1}}).
    \end{align*}
  It follows from Proposition~\ref{prop-continuousdependence}(ii) that
  \begin{align*}
  &d_{\sf BL}(\nu_0^{1,x}\circ {(\cS^x_{t}[\nu^1_{\cdot},\omega_1])^{-1}},\nu_0^{1,x}\circ {(\cS^x_{t}[\nu^1_{\cdot},\omega_2])^{-1}})\le {T\|\nu^1_{\cdot}\|_{\cI}\textnormal{e}^{L_2(\nu_{\cdot}^1)t}\|\omega_1-\omega_2\|_{\cI,\infty}}.\end{align*}

It suffices to estimate $d_{\sf BL}({\nu_0^{1,x}\circ (\cS^x_{t}[\nu^1_{\cdot},\omega_2])^{-1}},{\nu_0^{1,x}\circ (\cS^x_{t}[\nu^2_{\cdot},\omega_2])^{-1}})$, which follows from \eqref{term-1} that:
\[
  {d_{\sf BL}(\nu_0^{1,x}\circ {(\cS^x_{t}[\nu^1_{\cdot},\omega_2])^{-1}},\nu_0^{1,x}\circ  {(\cS^x_{t}[\nu^2_{\cdot},\omega_2])^{-1}})\le L_3(\nu^1_{\cdot},\nu^2_{\cdot})\|\nu^{1}\|_{\cI}\e^{L_4(\nu^1_{\cdot},\nu^2_{\cdot})t}
  \int_0^t d_{\infty,\BL} (\nu^1_{\tau},\nu^2_{\tau})\rd\tau.}\]
Hence
\begin{align*}
  d_{\infty,\BL}(\nu^1_{t},\nu^2_{t})=&\sup_{x\in X}d_{\sf BL}({\nu^{1,x}_t,\nu^{2,x}_t})\\
  \le&\ {T\|\nu^1_{\cdot}\|_{\cI}\textnormal{e}^{L_2(\nu_{\cdot}^1)t}\|\omega_1-\omega_2\|_{\cI,\infty}
  +L_3(\nu^1_{\cdot},\nu^2_{\cdot})\|\nu^{1}\|_{\cI}\e^{L_4(\nu^1_{\cdot},\nu^2_{\cdot})t}
  \int_0^t d_{\infty,\BL} (\nu^1_{\tau},\nu^2_{\tau})\rd\tau.}
\end{align*}
By Gronwall's inequality,
\[d_{\infty,\BL}(\nu^1_t,\nu^2_t)\le
 {T\|\nu^1_{\cdot}\|_{\cI}\textnormal{e}^{L_5t}\|\omega_1-\omega_2\|_{\infty,\cI}.}\]
  \item[(iii)] Continuous dependence on $\eta_0$. Assume 
{$\{\eta_k\}_{k\in\N_0}\subseteq\cB(X,\cM(X))$}  satisfy $$\lim_{k\to\infty}d_{\infty,\BL}(\eta_0,\eta_k)=0.$$ {Since $\lim_{k\to\infty}d_{\infty}(\eta_0,\eta_k)=0$, it follows from the triangle inequality that $a=\sup\limits_{k\in\N_0}\|\eta_k\|<\infty$.} Let $\nu_{\cdot}$ be the solution to the generalized VE \eqref{Fixed} with initial condition {$\nu_0\in\cC_{\infty}$.} Let ${\nu^k_{\cdot}}\in \mathcal{C}(\mathcal{I},{\mathcal{B}_{\infty}})$ be the solutions to the generalized VE \eqref{Fixed} with $\eta_0$ replaced by $\eta_k$ with the same initial conditions $\nu_0=\nu_{k,0}$. {Note that $\|\nu_{\cdot}\|_{\cI},\|{\nu^k_{\cdot}}\|_{\cI}\le\|\nu_0\|$ by Proposition~\ref{prop-continuousdependence}(i), the mass conservation law.}
   It then suffices to show
    \[\lim_{k\to\infty}d_{\infty,\BL}(\cF[\eta_0]\nu_t,\cF[\eta_0]{\nu^k_{t}})=0,\quad t\in\cI.\]
By triangle inequality,    \begin{alignat}{2}
\nonumber   d_{\sf BL}(\nu_t^x,\nu_{k,t}^x)=&\ d_{\sf BL}(\nu_0^x\circ {(\cS^x_{t}[\eta_0,\nu_{\cdot}])^{-1}},\nu_0^x\circ {(\cS^x_{t}[\eta_k,{\nu^k_{\cdot}}])^{-1}})\\
\label{Eq-10}      \le&\ d_{\sf BL}(\nu_0^x\circ {(\cS^x_{t}[\eta_k,\nu_{\cdot}])^{-1}},\nu_0^x\circ {(\cS^x_{t}[\eta_k,{\nu^k_{\cdot}}])^{-1}})\\
\nonumber      &+d_{\sf BL}(\nu_0^x\circ {(\cS_{t}^x[\eta_0,\nu_{\cdot}])^{-1}},\nu_0^x\circ {(\cS^x_{t}[\eta_k,\nu_{\cdot}])^{-1}}),\quad x\in X.
    \end{alignat}
    From \eqref{term-2} it follows that
   \begin{align}
   \nonumber&d_{\sf BL}(\nu_0^x\circ {(\cS^x_{t}[\eta_k,\nu_{\cdot}])^{-1}},\nu_0^x\circ {(\cS^x_{t}[\eta_k,{\nu^k_{\cdot}}])^{-1}})\\
   \nonumber\le&\int_{\sT}|\cS_{t}^x[\eta_k,\nu_{\cdot}]\psi
   -\cS^x_{t}[\eta_k,{\nu^k_{\cdot}}]\psi|\rd{\nu_0^x}(\psi)\eqqcolon{\beta_k(t,x)},\\
   \label{Eq-11}\le&\ {L_{3,k}(\nu_{\cdot},\nu^{k}_{\cdot})\|\nu_{\cdot}\|_{\cI}}\e^{{L_{4,k}(\nu_{\cdot},{\nu^k_{\cdot}})} t}
   \int_0^td_{\infty,\BL}(\nu_{\tau},{\nu^k_{\tau}})\rd\tau,
   \end{align}
    where {$L_{3,k}$ and $L_{4,k}$ defined in Proposition~\ref{prop-semiflow} that depend on $\|\eta_k\|$ linearly.}
 {For  $i=3,4$, let $\bar{L}_{i}$ be $L_{i}$ defined in Proposition~\ref{prop-semiflow} replacing $\|\eta_0\|$ by $a$, and $\|\nu_{\cdot}\|_{\cI}$ and $\|\upsilon_{\cdot}\|_{\cI}$ both by $\|\nu_0\|$.}

  From \eqref{Eq-11} it follows that
  \begin{equation}
    \label{Eq-12}
  d_{\sf BL}(\nu_0^x\circ {(\cS^x_{t}[\eta_k,\nu_{\cdot}])^{-1}},\nu_0^x\circ {(\cS^x_{t}[\eta_k,{\nu^k_{\cdot}}])^{-1}})\le {\beta_k(t,x)}\le {\bar{L}_3\e^{\bar{L}_4t}}
   \int_0^td_{\infty,\BL}(\nu_{\tau},{\nu^k_{\tau}})\rd\tau.\end{equation}
   We now estimate the second term in \eqref{Eq-10}.
\begin{align}
\nonumber  &d_{\sf BL}(\nu_0^x\circ (\cS^x_{t}[\eta_0,\nu_{\cdot}])^{-1},\nu_0^x\circ (\cS^x_{t}[\eta_k,\nu_{\cdot}])^{-1})\\
\nonumber    =&\sup_{f\in\mathcal{BL}_1(\bT)}\int_{\bT}\lt((f\circ \cS^x_{t}[\eta_0,\nu_{\cdot}])(\psi)-(f\circ \cS^x_{t}[\eta_k,\nu_{\cdot}])(\psi)\rt)\rd\nu_0^x(\psi)\\
\label{Eq-13}  \le&\int_{\bT}|\cS^x_{t}[\eta_0,\nu_{\cdot}]\psi-\cS^x_{t}[\eta_k,\nu_{\cdot}]\psi)|\rd\nu_0^x(\psi)
  \eqqcolon{\gamma_k(t,x)}
  \end{align}
Let $C_k={\|\gamma_k\|_{\cI,\infty}}$. It follows from \eqref{Eq-11}, \eqref{Eq-12} and \eqref{Eq-13} that, for $t\in\cI$,
\[
d_{\sf BL}(\nu_t^x,{(\nu^k)^x_t})\le C_k+ {\bar{L}_3} \e^{{\bar{L}_4} T}
   \int_0^td_{\infty,\BL}(\nu_{\tau},{\nu^k_{\tau}})\rd\tau,
\] which further implies by Gronwall inequality that
\[d_{\infty,\BL}(\nu_{t},{\nu^k_{t}})\le C_k\textnormal{e}^{{\bar{L}_3} e^{{\bar{L}_4} T}T},\quad t\in\cI.\]Note that $\lim_{k\to\infty}C_k=0$ by Proposition~\ref{prop-semiflow}(v) and the Dominated Convergence Theorem. This shows that
$$\lim_{k\to\infty}{d_{,\cI,\infty,\BL}(\nu_{\cdot},{\nu^k_{\cdot}})}=0.$$
\end{enumerate}
\end{proof}

\section{Notation}\label{appendix-Notation}

\setlength{\tabcolsep}{9pt}
\renewcommand{\arraystretch}{1.3}
\begin{longtable}{  p{2.2cm} p{10.3cm} }
\caption{Notation. 
}\label{table-summary}\\  \hline
$\R$ ($\R^+$)& the set of real numbers (nonnegative real numbers)\\\hline
$\mathbb{T}$ & $\coloneqq[0,1[$ via the natural projection $x\mapsto \e^{{\rm i}2\pi x}$.
\\\hline
$\cN$ & a compact interval.\\\hline
$\cI$ & an interval, mainly taken as $\coloneqq[0,T]$ for a finite $T>0$.\\\hline
$X$ & a compact subset of $\R^r$ for some $r\in\N$\\\hline
$(Y,d_Y)$ & a complete metric space\\\hline
$d_{\mathbb{T}}(x,y)$ & $\coloneqq\min\{|x-y|,1-|x-y|\}$, $x,\ y\in\mathbb{T}$.\\\hline
$\Diam A$ & $\coloneqq\sup_{x,y\in A}{|x-y|}$, the diameter of a set $A\subseteq X$\\\hline
$\mathfrak{B}(Y)$ & the Borel sigma algebra of $Y$\\\hline
$\cM(Y)$ & the set of all finite Borel signed measures on $Y$\\\hline
$\cM_+(Y)$ & the set of all finite Borel positive measures on $Y$\\\hline
$\cP(Y)$ & the space of Borel probabilities on $Y$\\\hline
$\mathcal{B}(X,Y)$ & the space of bounded measurable functions from $X$ to $Y$\\\hline
$\cC(X,Y)$ & the space of continuous functions from $X$ to $Y$\\\hline
$\mathcal{C}(X)$ & the space of continuous functions from $X$ to $\R$\\\hline
$\langle x\rangle$ & the fractional part of $x\in\mathbb{R}$\\\hline
$\lambda|_{\T}$ & the uniform measure on $\T$\\\hline
$\mu_Y$ & $\coloneqq\begin{cases}
  \lambda,& \text{if}\quad Y=X,\\
  \lambda|_{\T},& \text{if}\quad Y=\sT.
\end{cases}$\\\hline
$\delta_y$ & the Dirac measure at $y\in Y$\\\hline
 $\|\upsilon\|_{\TV}$ & $\sup_{f\in \mathcal{B}_1(Y)}\int f\rd\upsilon = \upsilon^+(Y)+\upsilon^-(Y),$\\\hline
$\|\upsilon\|_{\BL}$ & $\sup_{f\in\mathcal{BL}_1(Y)}\int_Y f\rd\upsilon$\\\hline
$\|\eta\|$& $\sup_{x\in X}\|\eta^x\|_{\TV}$\\\hline
{$\|\eta_{\cdot}\|_{\cI}$}&{$\sup_{t\in\mathcal{I}}\|\eta_t\|$}\\\hline
{$d_{\BL}(\upsilon_1,\upsilon_2)$} & {$\sup_{f\in \mathcal{BL}_1(Y)}\int fd(\upsilon_1-\upsilon_2)$}\\\hline
{$d_{\infty,\TV}(\eta,\xi)$} & $\sup_{x\in X}d_{\TV}(\eta^x,\xi^x)$\\\hline
{$d_{\infty,\BL}(\eta,\xi)$}& $\sup_{x\in X}d_{\BL}(\eta^x,\xi^x)$\\\hline
$d^{\cN}_{\infty,\BL}(\eta,\xi)$ & $\sup_{t\in\cN} d_{\BL,\infty}(\eta_t,\zeta_t)$\\\hline
$\|f\|_{\infty}$ & $\coloneqq\sup_{x\in Y}f(x)$ for $f\in\cB(Y)$\\\hline
$\Lip(f)$ & $\coloneqq\sup_{x,y\in Y,\ x\neq y}\frac{|f(x)-f(y)|}{d_Y(x,y)}$, the Lipschitz constant of $f\in\cC(Y)$\\\hline
$\BL(f)$ & $\coloneqq\Lip(f)+\|f\|_{\infty}$, the bounded Lipschitz constant of $f\in\cC(Y)$\\\hline
$\mathcal{L}^1_+(X)$ & $\coloneqq\{f\colon X\to\R\cup\{\pm\infty\}\colon \int_X|f|\rd\lambda<\infty,\ f(x)\ge0,\ \lambda\text{-a.e.}\ x\in X\}$\\\hline
$\mathcal{B}(X)$ & $\coloneqq\cB(X;\R)$\\\hline
$\mathcal{B}_1(X)$ & $\coloneqq\{f\in\mathcal{B}(X)\colon\|f\|_{\infty}\le1\}$\\\hline
$\mathcal{BL}_1(X)$ & $\coloneqq\{f\in\mathcal{C}(X)\colon\BL(f)\le1\}$\\\hline
$\mathcal{C}_{\sf b}(X,Y)$ & $\coloneqq\mathcal{C}(X,Y)\cap\mathcal{B}(X,Y)$ the space of bounded continuous functions
\\\hline
$\cB_{\infty}$ & $\coloneqq\{\xi\in\mathcal{B}(X,\cM_+(\T))\colon \int_{\T}\xi^x(\T)\rd x=1\}$\\\hline
$\cC_{\infty}$ & $\coloneqq\{\xi\in\mathcal{C}(X,\cM_+(\T))\colon \int_{\T}\xi^x(\T)\rd x=1\}$\\\hline
\end{longtable}

\end{document}